\documentclass[11pt, a4paper]{article}
\usepackage{amsmath,amssymb,amsthm}
\usepackage{mathtools}
\usepackage{fullpage,color}
\usepackage[colorlinks=true,anchorcolor=blue,filecolor=blue,linkcolor=red,urlcolor=blue,citecolor=blue]{hyperref}     

\usepackage{bm}
\usepackage{booktabs}  
\usepackage{float}   
\usepackage{tikz}
\definecolor{bookcol}{RGB}{168,46,40} 
\definecolor{pagecol}{RGB}{33,90,150}
\usetikzlibrary{shapes.geometric, arrows.meta, positioning, calc} 
\usepackage{geometry} 
\AtBeginDocument{%
  \setlength{\abovedisplayskip}{5pt plus 2pt minus 3pt}%
  \setlength{\belowdisplayskip}{5pt plus 2pt minus 3pt}%
  \setlength{\abovedisplayshortskip}{3pt plus 2pt minus 1pt}%
  \setlength{\belowdisplayshortskip}{5pt plus 2pt minus 2pt}%
}

\newtheorem{theorem}{Theorem}[section]

\newtheorem{lemma}[theorem]{Lemma}
\newtheorem{problem}[theorem]{Problem}
\newtheorem{proposition}[theorem]{Proposition}

\theoremstyle{definition}
\newtheorem{definition}[theorem]{Definition}
\newtheorem{example}[theorem]{Example}

\newtheorem{remark}[theorem]{Remark}
\theoremstyle{plain}

\newcommand{\bk}{\operatorname{bk}}

\newcommand{\T}{\mathcal{T}}

\renewcommand{\leq}{\leqslant}
\renewcommand{\le}{\leqslant}
\renewcommand{\geq}{\geqslant}
\renewcommand{\ge}{\geqslant}

\title{The spectral Erd\H{o}s book theorem: sharp bounds and stability}

\author{Yongtao Li\thanks{Yau Mathematical Sciences Center, Tsinghua University, Beijing, 100084, China. 
Email: \texttt{ytli0921@hnu.edu.cn}.}
\and 
Lele Liu\thanks{School of Mathematical Sciences, Anhui University, Hefei, 230601, China. Email: \texttt{liu@ahu.edu.cn}. Supported by the National
Natural Science Foundation of China (No. 12471320), and Anhui Provincial Natural Science Foundation for Excellent Young Scholars (No. 2408085Y003).}
\and 
Bo Ning\thanks{College of Cryptology and Cyber Science \& College of Computer Science, Nankai University, 
Tianjin, 300350, China. Email: \texttt{bo.ning@nankai.edu.cn}. Partially supported by the National Natural Science
Foundation of China (No. 12371350) and Fundamental Research Funds for the Central Universities, Nankai University (No. 63243151).}
}

\date{}

\begin{document}

\maketitle

\vspace{-0.8cm}
\begin{abstract}
The booksize $\bk(G)$ of a graph $G$ is the largest number of
triangles sharing a common edge. A classical theorem of Edwards,
conjectured by Bollob\'{a}s and Erd\H{o}s, states that every $n$-vertex graph $G$ with
 $e(G) > e(T_{n,2})$ has booksize greater
than $n/6$. Zhai and Lin [J. Graph Theory 102 (2023) 502--520]
asked whether the same conclusion holds under the spectral
condition $\lambda(G)>\lambda(T_{n,2})$, where $\lambda(G)$ is the
spectral radius of the adjacency matrix. We answer this question
in a strong form: every $n$-vertex graph $G\neq T_{n,2}$ with
$\lambda(G)\ge\lambda(T_{n,2})$ satisfies \vspace{-2mm}
\[
\bk(G)\ \ge\ \max\Bigl\{\tfrac13\lambda(G),\ \lambda(G)-\tfrac n3,\
2\lambda(G)-n\Bigr\}. \vspace{-1mm}
\]
Consequently, the condition $\lambda(G)>\lambda(T_{n,2})$ forces
$\bk(G)\ge\lfloor n/6\rfloor+1$. The middle term is a spectral improvement of Edwards' bound $\bk(G)\ge\frac{2m}{n}-\frac n3$, and all three bounds are best possible. 
These results come from the
edge-spectral setting: every graph $G$ with $m$ edges and
$\lambda(G)\ge\sqrt m$ that is not a
complete bipartite graph satisfies \vspace{-2mm}
\[
\bk(G)\ \ge\ \max\Bigl\{\lambda(G)-\tfrac{2m}{3\lambda(G)},\
2\lambda(G)-\tfrac{2m}{\lambda(G)}\Bigr\}, \vspace{-2mm}
\]
which strengthens the bound $\bk(G)\ge\frac13\lambda(G)$ of
Zhao, You, Zeng and Zhang. The first term is attained by infinitely many graphs and the second by all regular Tur\'an graphs. 
As an application of our method, we prove a triangle counting  bound $t(G)\ge\frac13(\lambda(G)+1)(\lambda(G)^2-m)$, which improves the result of Bollob\'{a}s and Nikiforov [J. Combin. Theory Ser B. (2007)]. Finally, we prove stability results 
 at both thresholds: if $\lambda(G)\ge(\frac12-o(1))n$, then either 
$\bk(G)\ge(\frac16-o(1))n$ or $G$ can be made into
$T_{n,2}$ by adding and deleting $o(n^2)$ edges; if
$\lambda(G)\ge(1-o(1))\sqrt m$, then either 
$\bk(G)\ge(\frac13-o(1))\sqrt m$ or $G$ differs from a complete bipartite graph in $o(m)$ edges. 
\end{abstract}

{\bf Keywords:} Extremal graph theory, supersaturation, booksize, spectral radius. 

{\bf 2020 AMS Subject Classifications:} 05C50, 05C35, 15A18.


\section{Introduction}\label{sec:introduction}

For a graph $G$, we write $V(G)$ and $E(G)$ for its vertex set and edge set, respectively,
 and set $n=|V(G)|$ and $m=|E(G)|$. 
 The neighborhood and degree of a vertex $v$ are denoted by $N(v)$ and $d(v)$, respectively. The adjacency matrix of $G$ is $A(G)$, and its spectral radius is $\lambda(G)$. 
A book of size $q$, denoted by $B_q$, is a graph obtained from $q$ triangles sharing a common edge.  The booksize $\bk (G)$ is the maximum size of a book contained in $G$; equivalently, 
$$\bk(G) =\max_{uv\in E(G)} |N(u)\cap N(v)|.$$ 

 Mantel's theorem states that every $n$-vertex graph with
more than $\lfloor n^{2}/4\rfloor$ edges contains a
triangle, and that the balanced complete bipartite graph
$T_{n,2}$ is the unique extremal graph. Two classical
strengthenings measure how robust this conclusion is. The
first counts triangles: Rademacher (see Erd\H{o}s
\cite{Erd1955,Erdos1964}) showed that such a graph
contains at least $\lfloor n/2\rfloor$ triangles, which
opened the study of supersaturation; see
\cite{Mub2010,LPS2020,MY2025,LM2022-Erd-Rad,BC2023} for
later developments. The second locates them: Erd\H{o}s
asked how many triangles must share a common edge, and the answer, a book of size $n/6$, is given by the theorem of Edwards recalled below. The booksize is a local quantity, attached to one edge, whereas the number of triangles is a global count; a lower bound on $\bk(G)$ therefore says where the
triangles are, not merely how many there are. Books are a basic device for turning local density into global structure: they drive the upper bound for diagonal Ramsey numbers \cite{Thomason1988,Conlon2009}, they
detect quasirandomness \cite{CFW2022}, and they govern
the trade-off between local and global triangle counts
\cite{Mubayi2012,CFS2020}. For other ways in which the
triangles forced by Mantel's threshold must be
distributed, see \cite{FM2017,GL2018} for edges in
triangles, \cite{Erdos95} for intersecting triangles and
\cite{GK2017} for edge-disjoint triangles.

\smallskip 
The study of books in graphs  has a long history and goes back to the work of Erd\H{o}s \cite{Erdos1962} in 1962, in which 
he proved that there exists a constant $c>0$ such that $m > n^2/4$ forces $\bk(G) > c\,n$. 
Moreover, Erd\H{o}s \cite[page 124]{Erdos1962} announced (without proof) that ``\textit{By more careful considerations we can prove that $\bk (G)\ge n/6+O(1)$}.'' 
In 1969, Erd\H{o}s \cite[Theorem 3]{Erdos1969} revisited this problem and proved that every $n$-vertex graph with $\mathrm{ex}(n,K_p) +1$ edges has an edge which is contained in $(10p)^{-6p}n^{p-2}$ copies of $K_p$. 
Erd\H{o}s \cite[page 291]{Erdos1969} wrote again that ``\textit{In fact I can show that every graph with $\lfloor n^2 /4\rfloor +1$ edges has an edge which is contained in at least $n/6 + O(1)$ triangles and that $n/6$ is best possible. For $p>3$, I have not succeeded in determining the best possible constant.}'' 
Bollob\'as and Nikiforov \cite{BN2005} provided a generalization, and in \cite{BN2008,BN2011} they studied the clique version, improving Erd\H{o}s' bound on the number of  copies of $K_p$ sharing a common edge.

\smallskip  
In 1975, Bollob\'{a}s and Erd\H{o}s  \cite{BollobasErdos1975}  conjectured that the $O(1)$ term in the booksize bound can be removed, that is, every $n$-vertex graph $G$ with $m> \lfloor n^{2}/4\rfloor$ edges satisfies $\bk (G)\ge n/6$. Later, Edwards \cite{Edwards1977} confirmed this conjecture in an unpublished manuscript, as recorded in  \cite[Lemma 4]{EFR1992}, and Khad\v{z}iivanov and Nikiforov \cite{KN1979} proved it independently, as recorded in \cite{BN2005}. Thus,
\begin{equation} 
\label{eq-Edw}
e(G)> e(T_{n,2})
\quad\Longrightarrow\quad
\bk(G)>\frac{n}{6}.
\end{equation}
Both the constant $1/6$ and the strict inequality in the hypothesis are best possible.  
Alternative proofs were given by Bollob\'{a}s and Nikiforov \cite{BN2005} and by Li, Feng and Peng \cite[Sec. 4.3.1]{LiFengPeng2025}. Zhao's textbook \cite{Zhao-book} lists the weaker bound $(1/6-o(1))n$ as Exercise~1.1.10. 
The behavior of graphs near this threshold has also attracted much attention; see, e.g.,  \cite{Mubayi2012,CFS2020, ChenMaWang2026, MiaoLiuDam2026}.

\smallskip
All of the results above are statements about the number
of edges. 
Spectral extremal graph theory asks how large the
spectral radius of the adjacency matrix can be when a graph is forbidden to
contain some structure.  
Over the past two decades, it has become clear
that many Tur\'an-type theorems admit spectral
strengthenings, in which the edge count is replaced by the spectral radius. The prototype is the spectral Tur\'an theorem: Wilf \cite{Wil1986} proved
that every $K_{r+1}$-free graph on $n$ vertices satisfies
$\lambda(G)\le(1-\frac1r)n$, and Nikiforov
\cite{Nikiforov2007} sharpened this to
$\lambda(G)\le\lambda(T_{n,r})$, with equality only for
$T_{n,r}$. Since $\lambda(G)\ge\frac{2m}{n}$, the spectral statement implies the classical Tur\'{a}n theorem, and it is applicable for more graphs.  
This paper investigates the program for the Erd\H{o}s book theorem. 
 
\subsection{The spectral Erd\H{o}s book problem}
 
 \label{sec-vertex-setting}

In 2023, Zhai and Lin \cite{ZhaiLin2023} initiated the corresponding spectral extremal problem for books. 
 They proved that the spectral condition $\lambda(G)\ge \lambda(T_{n,2})$ forces a book of size greater than $2n/13$, unless $G= T_{n,2}$. Furthermore, Zhai and Lin \cite[Problem 1.2]{ZhaiLin2023} proposed the spectral Erd\H{o}s book problem: whether the coefficient $1/6$ remains valid under the spectral hypothesis. 
 
\begin{problem}[Zhai--Lin \cite{ZhaiLin2023}]
\label{prob-ZL}
For every positive integer $n$, is it true that 
\begin{equation*}
\lambda(G)>\lambda(T_{n,2})
\quad\stackrel{?}{\Longrightarrow}\quad
\bk(G)>\frac{n}{6}.
\end{equation*}
\end{problem}

Problem \ref{prob-ZL} asks for a genuine strengthening of \eqref{eq-Edw}. 
Indeed, if $e(G)>e(T_{n,2})$, then
$ \lambda (G) \ge \frac{2m}{n}
 \ge \frac{2}{n}\big( \lfloor \frac{n^2}{4}
\rfloor +1\big)
 > \frac n2 \ge \lambda (T_{n,2})$,
so a positive answer to Problem \ref{prob-ZL} implies
\eqref{eq-Edw}.   
But the converse fails. 
Take $s=\lceil \frac{n}{2} \rceil+2$ and 
$G=K_{s}\cup (n-s)K_{1}$. Then
$\lambda(G) = s-1 > \frac{n}{2} \ge \lambda(T_{n,2})$ while
$e(G) = \binom{s}{2}\approx \frac{1}{8}n^{2}$ is only half of the Mantel threshold $\frac{1}{4}n^2$. 
The spectral radius reacts to local density, whereas the edge count sees the global average. This is precisely what makes the spectral form of
the Erd\H{o}s book theorem a statement about a much larger family of graphs than \eqref{eq-Edw}. 

\smallskip 
In this paper, we solve Problem \ref{prob-ZL}  
of Zhai and Lin \cite{ZhaiLin2023} in a stronger form.

\begin{theorem}  \label{thm-confirm-Zhai-Lin}
Let $G$ be an $n$-vertex graph with 
$\lambda (G) \ge \lambda(T_{n,2})$ and $G\neq T_{n,2}$. 
Then
\begin{equation*}
\bk(G) \ \ge \  \frac{1}{3}\lambda (G).
\end{equation*}
Consequently, if $G$ is an $n$-vertex graph with 
$ \lambda(G)>\lambda(T_{n,2})$,
then 
$\bk(G)\ge \left\lfloor\frac n6\right\rfloor+1$.
\end{theorem}

Both the hypothesis and the conclusion of Theorem \ref{thm-confirm-Zhai-Lin} are best possible. 
In Section \ref{sec:sharpness}, we characterize the graphs attaining equality: they are exactly the $\frac n2$-regular graphs in which every edge lies in $0$ or $\frac n6$ triangles (Proposition \ref{prop:equality}). 
There are infinitely many extremal graphs of Theorem \ref{thm-confirm-Zhai-Lin}. 
For every integer $n$ with  $12\mid n$, there are at least $\frac n{24}+2$ extremal graphs (Proposition \ref{prop:many}). 
Hence, the bound $\frac13\lambda(G)$ is attained, and the strict inequality in the second statement cannot be relaxed; 
the bound $\lfloor n/6\rfloor+1$ is attained
by unbalanced blow-ups of these graphs (Proposition \ref{prop:triangle-lift}).

\subsection{Refining the Edwards bound}

Edwards \cite{Edwards1977} in fact proved a stronger result than
\eqref{eq-Edw}. As recorded in the papers  \cite{EFR1992,BN2005}, every
$n$-vertex graph $G$ with $e(G) > e(T_{n,2})$ satisfies
\begin{equation}\label{eq:edw-2}
\bk(G) \ \ge \  \frac{2m}{n}-\frac n3 ,
\end{equation}
and \eqref{eq:edw-2} is best possible; see \cite{BN2005}. Both \eqref{eq-Edw} and \eqref{eq:edw-2} follow from a single
inequality, which appears as Theorem 1 in Khad\v{z}iivanov
\cite{Khadziivanov1991} and as equation (8) in Bollob\'as and
Nikiforov \cite{BN2005}, 
\begin{equation} \label{eq-BN}
\big(6\bk (G) - n\big)\, t(G)\ \ge\ \bk (G)\,(Q - mn),
\end{equation}
where $t(G)$ denotes the number of triangles of $G$ and
$Q=\sum_{v\in V(G)}d^2(v)$. We give a short new proof of \eqref{eq-BN} in Appendix
\ref{sec:App}, and we establish its spectral counterpart 
\eqref{eq:spectral-BN} in Section \ref{sec:weighted}.

The quantity $2m/n$ in \eqref{eq:edw-2} is the average degree, and $\lambda (G)\ge \frac{2m}{n}$ always holds. 
It is natural to ask whether the average degree can be replaced by the spectral radius. Our next theorem
answers this question and adds a further bound that takes over in the dense range. 

\begin{theorem}\label{thm:fixed-order-intro}
Let $G$ be an $n$-vertex graph with 
$\lambda (G) \ge \lambda(T_{n,2})$ and $G\neq T_{n,2}$. 
Then
\begin{equation*}
\bk(G) \ \ge \  \max\left\{ \lambda (G)-\frac{n}{3} , \, 2\lambda (G) - n \right\}.
\end{equation*}
\end{theorem}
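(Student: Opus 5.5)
The plan is to deduce Theorem~\ref{thm:fixed-order-intro} from the edge-spectral inequality announced in the abstract. That inequality says that every graph with $m$ edges and $\lambda(G)\ge\sqrt m$ that is not complete bipartite satisfies
\[
\bk(G)\ge\max\Bigl\{\lambda-\tfrac{2m}{3\lambda},\ 2\lambda-\tfrac{2m}{\lambda}\Bigr\},
\qquad \lambda=\lambda(G).
\]
Two elementary facts do the translation. First, $\lambda^2\le \sum_v d(v)^2/n$ is the wrong direction for what we need. Instead we use Stanley-type or trivial bounds giving $m\le \lambda n/2$ in the regime we need, or more simply $2m/\lambda\le n$. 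Second, $\lambda\ge\sqrt m$ must hold whenever $\lambda\ge\lambda(T_{n,2})$.

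\textbf{Comparing $m$ with $\lambda n/2$.} The inequality $\lambda\ge 2m/n$ goes the wrong way. What we actually need is $\frac{2m}{\lambda}\le n$, i.e.\ $m\le \lambda n/2$. This is not true in general, so we split on the size of $m$.

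\textbf{Case A: $m\le \lambda n/2$.} The bound for non-complete-bipartite graphs immediately gives
\[
\bk\ge\lambda-\tfrac{2m}{3\lambda}\ge \lambda-\tfrac n3
\qquad\text{and}\qquad
\bk\ge 2\lambda-\tfrac{2m}{\lambda}\ge 2\lambda-n.
\]
For the hypothesis $\lambda\ge\sqrt m$, the Stanley bound $\lambda\le \frac{-1+\sqrt{1+8m}}{2}$ is also the wrong direction. Instead, $m\le\lambda n/2$ together with $\lambda\ge\lambda(T_{n,2})\ge \lfloor n/2\rfloor$ gives roughly $m\le \lambda\cdot 2\lambda$. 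That is too weak, so we check the hypothesis directly: if $\lambda<\sqrt m$ then $\lambda(T_{n,2})^2\le\lambda^2<m$, i.e.\ $m>\lfloor n^2/4\rfloor$. This is Case B, so Case A may additionally assume $m\le\lfloor n^2/4\rfloor\le \lambda^2$, and the hypothesis holds.

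\textbf{Case B: $m>\lfloor n^2/4\rfloor$.} Here we use Edwards' inequality~\eqref{eq-BN} and its spectral counterpart. Set $b=\bk(G)$ and $t=t(G)$. Counting triangles through edges gives $3t\le bm$. Combined with~\eqref{eq-BN} and the bound $Q\ge n\lambda^2$ (Hofmeister), this yields $(6b-n)bm/3\ge b(n\lambda^2-mn)$, i.e.
\[
2b\ge \frac n3+\frac{n(\lambda^2-m)}{m}.
\]
We aim to show this is at least $2\lambda-\tfrac{2n}{3}$ and $4\lambda-2n$. Put $x=\lambda/n\in[1/2,1)$ and $\mu=m/n^2\le x/2$, using $m\le \lambda n/2$ since $\lambda\ge 2m/n$. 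The right-hand side is minimised at the largest $\mu$, and at $\mu=x/2$ it becomes $n(2x-\tfrac23)$, which gives $b\ge\lambda-\tfrac n3$.

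For the bound $2\lambda-n$, at $\mu=x/2$ we need $2x-\tfrac23\ge 4x-2$, i.e.\ $x\le\tfrac23$. For $x>\tfrac23$ we instead use $m\le \binom n2$, or the spectral counterpart~\eqref{eq:spectral-BN}, which replaces $mn$ by $\lambda\cdot 2m/\lambda$-type terms and should give $2\lambda-2m/\lambda\ge 2\lambda-n$ directly.

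\textbf{Main obstacle.} The delicate part is the mismatch in Case A: the edge-spectral bound needs $2m/\lambda\le n$, which is false in general, and in Case B $\lambda$ may be far above $2m/n$. I expect the cleanest route to be to work throughout with the spectral counterpart~\eqref{eq:spectral-BN} weighted by the Perron vector, so that $n$ enters only through $\sum x_v^2=1$ and $\sum x_v\le\sqrt n$. Choosing the weighting so that the "$m$" term is replaced by $\lambda n/2$ would produce exactly the constants $n/3$ and $n$.
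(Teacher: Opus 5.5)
There is a genuine gap. It begins with a misreading of a basic inequality. The bound $\lambda\ge 2m/n$ (the Rayleigh quotient at the all-ones vector) holds for every graph, and it is exactly the statement $m\le\lambda n/2$, i.e.\ $\frac{2m}{\lambda}\le n$ and $\frac{2m}{3\lambda}\le\frac n3$. You even invoke it yourself in Case B. So your Case A covers every graph, and the ``mismatch'' you single out as the main obstacle does not exist. What Case A actually still needs, once you restrict to $m\le\lfloor n^2/4\rfloor$ so that $\lambda\ge\sqrt m$, is the other hypothesis of the edge-spectral bound, and you never check it. You must show that $G$ with its isolated vertices removed is not complete bipartite. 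If $G=K_{a,b}\cup rK_1$, then $ab=\lambda^2\ge\lfloor n^2/4\rfloor$. Together with $ab\le\lfloor (a+b)^2/4\rfloor$, this forces $a+b=n$, so $G=T_{n,2}$, which is excluded.

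Case B is where the argument actually breaks. Hofmeister's inequality is $\lambda^2\ge Q/n$ (the Rayleigh quotient of $A^2$ at the all-ones vector), so $Q\ge n\lambda^2$ is the wrong direction. The inequality $2\bk(G)\ge\frac n3+\frac{n(\lambda^2-m)}{m}$ that you derive from it is false. Take $G=C_6\vee\overline{K}_2$: then $n=8$, $m=18>16$, $\bk(G)=2$, $\lambda=1+\sqrt{13}$, and $Q=168<n\lambda^2\approx169.7$. Your right-hand side is $\frac83+\frac{8\sqrt{13}-16}{9}\approx4.09>4$. In addition, the range $\lambda>\frac{2n}{3}$ for the second bound is left as ``should give \dots directly'', which is not an argument.

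The fix is to split on $\lambda$ versus $\sqrt m$, as the paper does, rather than on $m$ versus $\lfloor n^2/4\rfloor$.
If $\lambda\ge\sqrt m$, your Case A argument applies verbatim. Graphs with $m>\lfloor n^2/4\rfloor$ in this range cannot be complete bipartite plus isolated vertices, since $ab\le\lfloor n^2/4\rfloor$.
If $\lambda<\sqrt m$, then $\lambda^2<m\le\lambda n/2$ gives $\lambda<\frac n2$, so $2\lambda-n<0\le\bk(G)$. Also $m>\lambda^2\ge\lfloor n^2/4\rfloor$, so Edwards' bound \eqref{eq-Edw} gives $\bk(G)>\frac n6>\lambda-\frac n3$.
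The paper also uses the fact that Theorem~\ref{thm:two-lambda} needs no spectral hypothesis, so $\bk(G)\ge 2\lambda-\frac{2m}{\lambda}\ge 2\lambda-n$ holds outright.
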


The first bound $\bk (G)\ge \lambda (G)-\frac n3$ refines the aforementioned bound \eqref{eq:edw-2} due to Edwards. The second bound $\bk(G)\ge 2\lambda-n$ is the spectral counterpart of the inequality $\bk(G)\ge\frac{4m}{n}-n$ of Khad\v{z}iivanov \cite[Lemma 2]{Khadziivanov1991}.  
The second bound is better once
$\lambda (G)>\frac{2n}{3}$, and it is what makes the bound tight on the Tur\'an graphs $T_{n,r}$ with $r\ge 3$ and $r\mid n$. The graphs attaining equality in either bound are determined
in Proposition \ref{prop:equality-fixed}. 
We derive both bounds from the stronger edge-spectral versions (see Theorems \ref{thm:edge-spectral-intro} and \ref{thm:two-lambda}), to which we turn in the next subsection.

\subsection{Books in graphs with given size}

In the second branch of spectral extremal graph theory, the number of edges rather than the number of vertices is prescribed. 
This is called the \emph{edge-spectral Tur\'{a}n problem}, which asks for the maximum spectral radius of an $F$-free graph with $m$ edges; see, e.g., \cite{LLZ-edge-spectral,LLZ-edge-color-critical,LLLZ2026}. 
It goes back to Brualdi and Hoffman
\cite{BrualdiHoffman1985},
and it differs from the vertex-spectral  setting of \cite{ZhaiLin2023}. 
When the number of edges is the prescribed parameter, 
isolated vertices are irrelevant: they change neither $\lambda (G)$ nor $\bk (G)$.
 \emph{In the edge-spectral setting, we assume that $G$ contains no isolated vertex, and extremal graphs are described accordingly}.

Nosal's theorem \cite{Nosal1970} states that every triangle-free graph with $m$ edges satisfies $\lambda(G)\leq\sqrt m$; see \cite{LNW2021,ZS2022dm,LLZ2025eujc} for refinements. In 2002, 
 Nikiforov \cite{Nikiforov2002} showed that every $K_{r+1}$-free graph $G$  satisfies 
 $\lambda^2(G) \le (1-\frac{1}{r} )2m. $ 
  We refer to \cite{CY2026, LiuNing2026jctb,LLZ-edge-spectral, LLZ-edge-color-critical,FLZ2026} for further extensions.   
Graphs
with $\lambda(G)>\sqrt m$ are called \emph{Nosal graphs}
\cite{LiLiuZhang2026}. They are forced to contain many
triangles:  at least $\frac{1}{3}\lambda (\lambda^2 - m )$ by Bollob\'as and Nikiforov \cite{BN2007}, 
at least $\lfloor\frac12(\sqrt m-1)\rfloor$ by
Ning and Zhai \cite{NingZhai2023}, and at least
$m(\lambda -\sqrt m\,)$ by Chen, Li and Tang
\cite{ChenLiTang2026}.  
Zhai, Lin and Shu \cite{ZLS2021} showed that every Nosal graph contains a copy of $K_{2,t}$ with $t>\frac{1}{4}\sqrt m$, and Li, Liu and Zhang \cite{LLZ-edge-spectral} improved this to $t\ge \frac{1}{2}\sqrt{m}-O(1)$.

How large a book must a Nosal graph contain? 
Every triangle contributes to the codegree of each of its three edges,  so $\bk(G)\ge 3t(G)/m$, and the triangle bound of
\cite{ChenLiTang2026} yields $\bk(G)\ge3(\lambda -\sqrt m\,)$. Consequently, if $\lambda (G)\ge (1+ \varepsilon )\sqrt{m}$ for some $\varepsilon >0$, then $\bk (G)\ge 3\varepsilon \sqrt{m}$. 
The constant $3\varepsilon$ vanishes as $\varepsilon \to 0$,  and it is  interesting to determine the correct  order of the booksize of Nosal graphs. 
Using a counting argument, Nikiforov \cite{Nikiforov2021} first proved $\bk(G)>\frac{1}{12}m^{1/4}$, 
which settles a conjecture of Zhai, Lin and Shu
\cite[Conjecture 5.2]{ZLS2021}.
 Using the probabilistic method, 
 Li, Liu and Zhang \cite{LiLiuZhang2026} obtained
$\bk (G)>\frac{1}{24}\sqrt m$, which gives the correct order of magnitude, 
and they provided a construction showing that the 
constant cannot exceed $\frac13$. This led to the following problem.

\begin{problem}[Li--Liu--Zhang \cite{LiLiuZhang2026}]
\label{prob-LLZ}
Does every $m$-edge graph $G$ satisfy
\begin{equation} \label{eq-ZYZZ}
\lambda(G) > \sqrt{m}
\quad\stackrel{?}{\Longrightarrow}\quad
\bk(G)>\frac{1}{3}\sqrt{m}. 
\end{equation}
\end{problem}

The constant was improved to $\frac19$ by Zhai, Li and Lou
\cite{ZhaiLiLou2026} through Perron vector analysis, and
then to $\frac14$ by Chen, Li and Tang \cite{ChenLiTang2026}. 
Zhao, You, Zeng and Zhang \cite{ZYZZ2026} 
confirmed Problem \ref{prob-LLZ} and 
reached the optimal constant in the stronger form
$\bk (G)\ge \frac13\lambda (G)$, valid for every $m$-edge
graph with $\lambda (G)\ge \sqrt m$ that is not a complete bipartite graph.
The bound $\frac13\lambda (G)$ discards the edge count
entirely. Our next theorem keeps it and is sharp on a whole family of graphs. 

\begin{theorem} \label{thm:edge-spectral-intro}
Let $G$ be a graph with $m\ge 1$ edges and
spectral radius $\lambda \ge\sqrt m$. Then
\[
\bk(G) \  \ge \  \lambda-\frac{2m}{3\lambda},
\]
unless $G$ is a complete bipartite graph. 
\end{theorem}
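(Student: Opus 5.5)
The plan is to run a weighted, spectral version of the Bollob\'as--Nikiforov counting behind \eqref{eq-BN}, with the Perron vector playing the role of the uniform weights.

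\textbf{Reductions.} Set $b=\bk(G)$ and $\lambda=\lambda(G)$. First dispose of $b=0$. Then $G$ is triangle-free, so Nosal's theorem gives $\lambda\le\sqrt m$. With $\lambda\ge\sqrt m$ this forces $\lambda=\sqrt m$, and the equality case of Nosal's theorem (no isolated vertices) makes $G$ complete bipartite, which is excluded. So we may assume $b\ge1$. We may also assume $G$ is connected, since otherwise we pass to the component $H$ carrying $\lambda$. For that component $e(H)\le m$ and $\lambda(H)=\lambda\ge\sqrt{e(H)}$. The target inequality only weakens when $m$ grows, because $\lambda-\tfrac{2m}{3\lambda}$ decreases in $m$. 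Moreover $H$ is not complete bipartite: otherwise $\lambda=\sqrt{e(H)}<\sqrt m$ whenever $G\neq H$. Finally let $x$ be the unit Perron vector of $G$. The goal is the inequality $3\lambda^2-2m\le 3b\lambda$.

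\textbf{The two-sided estimate.} I will estimate the weighted triangle sum
\[
\Phi=\sum_{uv\in E(G)} x_ux_v\sum_{w\in N(u)\cap N(v)}(x_u+x_v+x_w)
\]
or a close variant of it from both sides.

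The upper bound comes from the codegree condition. For each edge, $\sum_{w\in N(u)\cap N(v)}x_w$ has at most $b$ terms. Combining this with $\sum_{uv\in E}2x_ux_v=\lambda$ and Cauchy--Schwarz, $\sum_w x_w^2=1$, yields a bound of the shape $\Phi\le c\,b\lambda$.

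The lower bound uses the eigen-equations $(A^2x)_u=\lambda^2x_u$ and $(A^3x)_u=\lambda^3x_u$. The key identity is
\[
\lambda^2x_u=d(u)x_u+\sum_{w\ne u}|N(u)\cap N(w)|\,x_w ,
\]
and I sum it against $x_v$ over edges $uv$. The walk $u$--$a$--$w$ with $w\sim u$ closes a triangle, so the diagonal contribution $\sum_u d(u)x_u^2$ and the triangle-closing contribution separate. This should produce an inequality of the form
\[
\lambda^3\le \sum_u d(u)x_u^2\,\lambda+(\text{triangle term}).
\]
It is the spectral analogue of the Bollob\'as--Nikiforov inequality $\lambda^3-\lambda m\lesssim 3t$, which is \eqref{eq:spectral-BN} promised in Section~\ref{sec:weighted}. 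We control $\sum_u d(u)x_u^2$ by $\sum_{uv\in E}(x_u^2+x_v^2)\le m\cdot\max$, or better via $\lambda^2=\sum_u (\sum_{w\sim u}x_w)^2\le \sum_u d(u)\sum_{w\sim u}x_w^2$. This brings in $m$ with the coefficient $\tfrac{2}{3}$ after dividing by the factor $3$ from the three edges of each triangle.

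Comparing the two bounds and dividing by $\lambda$ should give $3\lambda^2-2m\le 3b\lambda$. This is exactly the claimed inequality $b\ge\lambda-\tfrac{2m}{3\lambda}$.

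\textbf{Main obstacle.} The delicate step is choosing the right weighted functional $\Phi$ so that the constants come out exactly as $\tfrac23$ rather than something weaker. In the vertex version one uses $d(u)+d(v)-|N(u)\cap N(v)|\le n$. In the edge setting the replacement must be an $\ell^2$-type inequality for each edge $uv$. The one I would try first is
\[
\sum_{w\in N(u)\cup N(v)}x_w^2\le 1 .
\]
It gives $\sum_{w\in N(u)}x_w^2+\sum_{w\in N(v)}x_w^2\le 1+\sum_{w\in N(u)\cap N(v)}x_w^2$, and I would weight it by $x_ux_v$ and sum over edges. Alternatively one could weight by $\lambda x_u\cdot\lambda x_v$ and use $\lambda x_u=\sum_{w\sim u}x_w$ with Cauchy--Schwarz. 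If the first attempt loses a constant, I would instead split the triangle contributions according to which of the three vertices carries the largest Perron entry.

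Once the inequality is in place, a final check is needed: strictness in Nosal's theorem, or in Cauchy--Schwarz, must fail only for complete bipartite graphs, so that the exceptional case is exactly the one excluded in the statement.
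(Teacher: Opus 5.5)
\textbf{There is a genuine gap, and it is structural rather than a matter of tuning $\Phi$.} Your reductions are fine: the case $b=0$ via the equality case of Nosal's theorem, and passing to the component carrying $\lambda$. The problem is the core plan. It is a chain $f(\lambda,m)\le\Phi\le c\,b\,\lambda$ whose lower bound comes from the eigen-equations alone and does not involve $b$. You want $m$ to enter with coefficient $\frac23$, so the lower bound would read $3\lambda^2-2m\le\mathrm{const}\cdot\Phi$.

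No such inequality can hold. It would have to hold for $K_{a,b}$, where $\Phi=0$ but $3\lambda^2-2m=m>0$. More generally, any $b$-free bound $\Phi\ge f(\lambda,m)$ must satisfy $f(\sqrt m,m)\le 0$ (test it on $K_{1,m}$). So at the Nosal threshold your chain yields nothing, whereas the theorem gives $b\ge\frac13\sqrt m$ there. The exclusion of complete bipartite graphs therefore has to be used quantitatively. In your plan it enters only through a strictness check, which gives at most $\Phi>0$, i.e.\ $b\ge 1$.

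At best, your two estimates are Perron analogues of $T\ge\lambda^2-m$ and of the cap $T\le\frac12 b\lambda$. Together these give only $b\ge 2\lambda-\frac{2m}{\lambda}$ (Theorem~\ref{thm:two-lambda}), which is weaker than the target whenever $\lambda<\frac{2}{\sqrt3}\sqrt m$. Two related points:
\begin{itemize}
\item Your candidate inequality $\sum_{w\in N(u)\cup N(v)}x_w^2\le 1$ is the analogue of the per-edge bound $d(u)+d(v)-c_{uv}\le n$. In the vertex setting that bound produces $\frac{4m}{n}-n$ and $2\lambda-n$, not the constant $\frac13$.
\item The inequality $\lambda^3-\lambda m\lesssim 3t$ you cite is the Bollob\'as--Nikiforov triangle bound. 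It is not \eqref{eq:spectral-BN}, which contains $\bk(G)$ on both sides.
\end{itemize}

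\textbf{The missing ingredient is a second use of the booksize, on the lower-bound side.} Take the $\ell^1$-normalized Perron vector and set $T=\sum_v x_vt_v$ and $S=\sum_v x_vs_v$, where $s_v$ counts edges with both ends outside $N[v]$. The paper proves the local charging inequality $\sum_{u\in N(v)}t_u\le b\,(2t_v+s_v)$. The argument has two parts:
\begin{itemize}
\item Every triangle $K$ satisfies $|V(K)\cap N(v)|\le 2a_K+c_K$, where $a_K$ and $c_K$ count the edges of $K$ inside $N(v)$ and inside $V(G)\setminus N[v]$. This per-triangle count over three edges is where the factor $3$ comes from.
\item Each such edge lies in at most $b$ triangles.
\end{itemize}
Weighting by $x_v$ gives $\lambda T\le b(2T+S)$. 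Combined with the identity $T-S=\lambda^2-m$, this becomes $(3b-\lambda)T\ge b(\lambda^2-m)\ge 0$. Since $G$ is not complete bipartite, $T>0$, hence $3b\ge\lambda$. Multiplying the cap $T\le\frac12 b\lambda$ by $3b-\lambda\ge 0$ then gives $b(\lambda^2-m)\le\frac12 b\lambda(3b-\lambda)$, which is exactly $3\lambda^2-2m\le 3b\lambda$.

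What converts mere positivity of $T$ into $b\ge\lambda/3$ is an inequality of this shape, with $b$ multiplying the triangle functional. Your plan has no counterpart of it.
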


Theorem \ref{thm:edge-spectral-intro}  improves the bound of Zhao et al. \cite{ZYZZ2026}, since 
$\lambda -\frac{2m}{3\lambda }\ge \frac{1}{3}\lambda$ under the hypothesis $\lambda \ge \sqrt m$. 
The gain grows with $\lambda$: writing $x= {\lambda }/{\sqrt m}\in
[1,\sqrt 2)$, we have
$(\lambda -\frac{2m}{3\lambda } ) /
(\frac{1}{3}\lambda )=3-\frac{2}{x^2} \to 2$ as $x \to \sqrt{2}$. 
Theorem \ref{thm:edge-spectral-intro} implies that 
 if $\lambda (G) \ge \sqrt{m}$ and $G$ is not complete bipartite, then 
$ \bk (G) \ge  \lambda (G) - \frac{2}{3}\sqrt{m} $. 
This is an edge-spectral counterpart of the first term of 
Theorem \ref{thm:fixed-order-intro}.

\smallskip
The bound of Theorem \ref{thm:edge-spectral-intro} is best possible and attained by infinitely many values of $x:=\lambda/\sqrt m$.
The graphs $H_{s,t}$ of Example \ref{ex:EFR} satisfy
$x=2\sqrt{(s-1)/(3s)}$, and these values increase from $x=1$
to the limit point $2/\sqrt3$; the Tur\'an graph $T_{3k,3}$
sits at the limit point, with $\lambda=2k$, $m=3k^2$ and
$\bk=k$. Proposition \ref{prop:equality-edge} characterizes
all graphs attaining equality above the Nosal threshold. Besides the graphs $H_{s,t}$,
they include the joins $F\vee\overline{K}_r$ of an $r$-regular
triangle-free graph $F$ on at most $6r$ vertices with the
empty graph $\overline{K}_r$ (Remark \ref{rem:equality-edge});
the Tur\'an graph $T_{3k,3}$ is the case $F=K_{k,k}$.

\smallskip 
The second bound needs no spectral hypothesis, and it is a companion of Theorem \ref{thm:edge-spectral-intro}.

\begin{theorem}\label{thm:two-lambda}
Let $G$ be a graph with $m\ge1$ edges and spectral
radius $\lambda$. Then
\begin{equation}\label{eq:two-lambda}
\bk(G)\ \ge\ 2\lambda-\frac{2m}{\lambda}.
\end{equation}
Equality holds if and only if $G$ is complete bipartite, or regular complete multipartite.
\end{theorem}

Table \ref{tab:three-settings} compares the bounds on the booksize in the three different settings. 

\begin{table}[H]
\centering
\renewcommand{\arraystretch}{1.3}
\setlength{\tabcolsep}{8pt}
\begin{tabular}{@{}ccc@{}}
\toprule
 Classical edge setting & Vertex-spectral setting
& Edge-spectral setting \\[-4pt]
 $e(G)>e(T_{n,2})$ & $\lambda (G)>\lambda(T_{n,2})$
& $\lambda (G)>\sqrt m$ \\
\midrule
 $\bk (G)>\frac{1}{6}n$
  {\footnotesize\cite{Edwards1977,KN1979}}
& $\bk (G)> \frac{1}{6}n $
& $\bk (G)>\frac{1}{3} \sqrt m$
  {\footnotesize\cite{ZYZZ2026}} \\
    & $\bk (G) \ge\frac13\lambda$
& $\bk (G) \, \ge\, \frac13\lambda$ 
  {\footnotesize\cite{ZYZZ2026}} \\
 $\bk (G) \ge\frac{2m}{n}-\frac n3$
  {\footnotesize\cite{Edwards1977}}
& $\bk (G) \ge\lambda-\frac n3$
& $\bk (G) \ge\lambda-\frac{2m}{3\lambda}$ \\
 $\bk (G) \ge\frac{4m}{n}-n$
  {\footnotesize\cite{Khadziivanov1991}}
& $\bk (G) \ge2\lambda-n$ 
& $\bk (G) \, \ge\, 2\lambda-\frac{2m}{\lambda}$  \\
\bottomrule
\end{tabular}
\caption{Lower bounds on the booksize in the three settings. The bounds in the entries without a reference are established in the current paper.}
\label{tab:three-settings}
\end{table}

\newpage 
The bounds
$\lambda -\frac{2m}{3\lambda }$ and
$2\lambda -\frac{2m}{\lambda }$ are equal when
$\lambda/ \sqrt{m} = 2/ \sqrt{3}$; 
see Figure \ref{fig:two-bounds}. 
 The first is larger below this value, and the second is larger above it. Together they
cover the full range $x=\lambda /\sqrt m\in [1,\sqrt 2\,)$, where the upper limit $\sqrt 2$ is approached by complete graphs. 
By $\lambda \ge \frac{2m}{n}$, we get 
$\lambda -\frac{2m}{3\lambda }\ge \lambda -\frac n3$ and
$2\lambda -\frac{2m}{\lambda }\ge 2\lambda -n$, so Theorems
\ref{thm:edge-spectral-intro} and \ref{thm:two-lambda} are
stronger than Theorem \ref{thm:fixed-order-intro}.

\begin{figure}[htbp]
\centering
\begin{tikzpicture}[xscale=20,yscale=3,
  >={Stealth[length=2mm]},
  dot/.style={circle,fill,inner sep=1.1pt},
  odot/.style={circle,draw,fill=white,inner sep=1pt}]

\fill[black!7]
  (1,0)
  -- plot[domain=1:1.15470,samples=60] (\x,{\x-2/(3*\x)})
  -- plot[domain=1.15470:1.41421,samples=60]
       (\x,{2*\x-2/\x})
  -- (1.41421,0) -- cycle;

\draw[black!30,dashed] (1,0) -- (1,0.33333);
\draw[black!30,dashed] (1.15470,0) -- (1.15470,0.57735);
\draw[black!30,dashed] (1.41421,0) -- (1.41421,1.41421);
\draw[black!30,dashed] (0.988,0.33333) -- (1,0.33333);
\draw[black!30,dashed] (0.988,0.57735) -- (1.15470,0.57735);
\draw[black!30,dashed] (0.988,1.41421) -- (1.41421,1.41421);

\draw[->] (0.982,0) -- (1.5,0);
\draw[->] (0.988,-0.03) -- (0.988,1.60);
\node[anchor=west,font=\small] at (1.415,0.1)
  {$x=\lambda/\sqrt m$};
\node[anchor=south west,font=\small] at (0.99,1.45)
  {$y= \bk(G)/\sqrt m$};

\foreach \p/\l in {1/{$1$},
  1.15470/{$\tfrac{2}{\!\!\sqrt3}$}, 1.41421/{$\sqrt2$}}
  \draw (\p,0) -- (\p,-0.03) node[below,font=\small] {\l};
\foreach \q/\l in {0.33333/{$\tfrac13$},
  0.57735/{$\tfrac{1}{\!\!\sqrt3}$}, 1.41421/{$\sqrt2$}}
  \draw (0.988,\q) -- (0.980,\q) node[left,font=\small] {\l};

\draw[red!75!black,line width=1pt]
  plot[domain=1:1.15470,samples=80] (\x,{\x-2/(3*\x)});
\draw[red!75!black,thin,dashed,opacity=0.5]
  plot[domain=1.15470:1.41421,samples=80]
    (\x,{\x-2/(3*\x)});
\draw[blue!70!black,line width=1pt]
  plot[domain=1.15470:1.41421,samples=80] (\x,{2*\x-2/\x});
\draw[blue!70!black,thin,dashed,opacity=0.5]
  plot[domain=1:1.15470,samples=80] (\x,{2*\x-2/\x});

\draw[green!55!black,densely dotted,line width=0.9pt]
  (1,0.33333) -- (1.41421,0.47140);
\node[green!55!black,font=\scriptsize,anchor=west]
  at (1.254,0.53) {$y\ge \tfrac{1}{3}x$ Zhao et al. \cite{ZYZZ2026}};
  

\foreach \p/\q in {1.15470/0.57735}
  \node[dot,blue!70!black] at (\p,\q) {};
\node[odot,blue!70!black] at (1.41421,1.41421) {};
\node[odot] at (1,0) {};

\node[red!75!black,font=\small,align=center,anchor=west]
  at (1.005,0.82) {Theorem \ref*{thm:edge-spectral-intro}\\
  $y \ge x-\tfrac{2}{3x}$};
\draw[red!75!black,thin,->] (1.085,0.655) -- (1.09,0.5);
\node[blue!70!black,font=\small,align=center,anchor=west]
  at (1.18,1.2) {Theorem \ref*{thm:two-lambda}\\
  $y \ge 2x-\tfrac{2}{x}$};
\draw[blue!70!black,thin,->] (1.235,1.06) -- (1.245,0.91);
\node[black!55,font=\scriptsize] at (1.20,0.18)
  {no graph lies in the gray region};
\node[red!75!black,font=\scriptsize,anchor=south west]
  at (0.999,0.42) {$H_{s,t} (s\ge 4)$};
\node[blue!70!black,font=\scriptsize,anchor=south east]
  at (1.24,0.8) {$T_{rk,r} (r\ge 3)$};
\node[font=\scriptsize,anchor=west] at (1.424,1.41)
  {$K_n$};
\node[font=\scriptsize,anchor=west] at (1.008,0.09)
  {$K_{a,b}$};
\draw[black!60,thin] (1.006,0.075) -- (1.0015,0.018);
\end{tikzpicture}
\vspace{-5mm}
\caption{The lower bounds on $\bk(G)/\sqrt m$ as
functions of $x=\lambda(G)/\sqrt m$.}
\label{fig:two-bounds}
\end{figure}

 \subsection{An application: refining the Bollob\'as--Nikiforov bound}

Recall that $t(G)$ denotes the number of triangles in $G$. Bollob\'as and Nikiforov
\cite{BN2007} showed that 
if $G$ is an $m$-edge graph with spectral radius $\lambda$, then 
$$ t(G) \ge \frac{1}{3}\lambda (\lambda^2 - m ).$$ 
 This bound was also proved by 
  Cioab\u{a}, Feng, Tait and Zhang \cite[Lemma 7]{CFTZ20} in an equivalent form: $m \ge \lambda^2 - \frac{3}{\lambda}t(G)$. 
Ning and Zhai \cite{NingZhai2023} provided an alternative proof and showed that equality
holds if and only if $G$ is a complete bipartite graph.  
As an application of our method, we improve the Bollob\'as--Nikiforov bound throughout the full range $\sqrt m\le \lambda < \sqrt{2m}$. 

\begin{theorem}\label{thm:BN-refined}
Every graph $G$ with $m\ge 1$ edges and spectral radius
$\lambda$ satisfies
\begin{equation}\label{eq:BN-refined}
t(G)\ \ge\ \frac{1}{3} (\lambda+1)(\lambda^{2}-m).
\end{equation}
Equality holds if and only if $G$ is a complete graph or a
complete bipartite graph.
\end{theorem}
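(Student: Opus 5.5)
The plan is to work with the spectrum. Let $\lambda=\lambda_1\ge\lambda_2\ge\dots\ge\lambda_n$ be the eigenvalues of $A(G)$. Three trace identities are available:
\[
\sum_i\lambda_i=0,\qquad \sum_i\lambda_i^2=2m,\qquad \sum_i\lambda_i^3=6t(G).
\]
Write $S_k=\sum_{i\ge2}\lambda_i^k$, so that $S_1=-\lambda$, $S_2=2m-\lambda^2$ and $6t=\lambda^3+S_3$.

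\textbf{Step 1 (reduction).} Inequality \eqref{eq:BN-refined} reads $6t\ge 2(\lambda+1)(\lambda^2-m)$. Substituting the identities and using $2\lambda^2-2m=\lambda^2-S_2$, this is equivalent to
\[
S_3+(\lambda+1)S_2\ \ge\ \lambda^2 = -\lambda S_1 .
\]
In other words, it is equivalent to
\[
\sum_{i\ge2} g(\lambda_i)\ \ge\ 0,\qquad\text{where } g(x)=x(x+1)(x+\lambda).
\]
For comparison, the Bollob\'as--Nikiforov proof uses only $g_0(x)=x^2(x+\lambda)\ge 0$ on $[-\lambda,\lambda]$, so the whole gain comes from the extra factor $x+1$.

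\textbf{Step 2 (pointwise sign).} Every $\lambda_i$ lies in $[-\lambda,\lambda]$. On $[-\lambda,-1]$ we have $x\le0$, $x+1\le0$ and $x+\lambda\ge0$, so $g\ge0$. On $[0,\lambda]$ all three factors are nonnegative, so again $g\ge0$. Hence $g(\lambda_i)\ge0$ for every eigenvalue outside the open interval $(-1,0)$. In particular the inequality holds outright whenever $G$ has no eigenvalue in $(-1,0)$, as happens for $K_n$ and for complete bipartite graphs. In those graphs every term vanishes, since the spectrum is contained in $\{\lambda,0,-1\}$ or $\{\pm\lambda,0\}$. This already confirms the equality cases.

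\textbf{Step 3 (the main obstacle).} The difficulty is eigenvalues in $(-1,0)$, where $g(x)\ge -\lambda/4-O(1)$ is negative. Any extra combination of the three identities only shifts $g$ by multiples of $x$ and $x^2$ with compensating constants, and no such shift makes the cubic nonnegative on all of $[-\lambda,\lambda]$ while still yielding the stated constant. So a purely pointwise argument cannot work, and some graph structure must enter.

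The first thing I would try is a compensation argument. Let $k$ be the number of eigenvalues in $(-1,0)$; each contributes at least $-\lambda/4$. Since $\sum_{i\ge2}\lambda_i=-\lambda$ and the eigenvalues in $[-1,0)$ carry total mass at most $k$, the eigenvalues in $[-\lambda,-1]$ must carry negative mass at least $\lambda-k$ minus the positive part $\sum_{\lambda_i>0,\,i\ge2}\lambda_i$. Moreover $g(x)\ge |x|\,(|x|-1)(\lambda-|x|)$ there, which is large except near $-1$ and $-\lambda$. A convexity argument over the admissible distributions of this mass, comparing with the slack from positive $\lambda_i$, should then show that the deficit $k\lambda/4$ is absorbed.

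If this bookkeeping does not close, I would switch to the paper's Perron-vector method. Take the unit Perron vector $\mathbf x$ and expand $\lambda^3=\sum_{u,v,w}a_{uv}a_{vw}x_ux_w$. Isolating the weighted triangle term $\sum_{uv\in E}2x_ux_v|N(u)\cap N(v)|$ gives a spectral version of \eqref{eq-BN}, and summing it over all edges with unit weights produces the extra $+\frac13(\lambda^2-m)$ beyond Bollob\'as--Nikiforov. In that approach the $+1$ would come from the diagonal contribution $\sum_u x_u^2=1$.

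Finally, for the equality case I would trace back which inequalities must be tight. Tightness should force every eigenvalue other than $\lambda$ to lie in $\{0,-1,-\lambda\}$, which identifies $G$ as complete or complete bipartite.
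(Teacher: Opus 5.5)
\textbf{There is a genuine gap: Step 3 is the whole theorem, and it is left unproved.} Your Steps 1 and 2 are correct. The inequality is equivalent to $\sum_{i\ge2}g(\lambda_i)\ge0$ with $g(x)=x(x+1)(x+\lambda)$. Since $g$ vanishes on $\{0,-1,-\lambda\}$, this also confirms that complete and complete bipartite graphs attain equality.

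The compensation argument you describe cannot close with the information it uses. Take the multiset consisting of $8$ once and $-\tfrac12$ sixteen times. It satisfies $\sum\lambda_i=0$, $\sum\lambda_i^2=68=2m$ with $m=34$, and $\sum\lambda_i^3=510=6t$ with $t=85$, and all entries lie in $[-8,8]$. There is no eigenvalue in $[-\lambda,-1]$ and no positive $\lambda_i$ with $i\ge2$. So nothing absorbs the sixteen terms $g(-\tfrac12)=-\tfrac{15}{8}$, and indeed $\frac13(\lambda+1)(\lambda^2-m)=90>85=t$. Hence the three trace identities together with $|\lambda_i|\le\lambda$ do not imply the bound. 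Some genuinely graph-theoretic input is needed, and you never identify it. For the same reason your equality argument is unsupported: ``tightness forces every $\lambda_i$ into $\{0,-1,-\lambda\}$'' would follow only from a pointwise argument, which you yourself observe is impossible.

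\textbf{The fallback sketch does not supply the missing input either.} For a unit Perron vector, $\sum_{u,v,w}a_{uv}a_{vw}x_ux_w=\bm{x}^{\mathsf T}A^2\bm{x}=\lambda^2$, not $\lambda^3$. Your claims about summing with unit weights, and about the $+1$ coming from $\sum_u x_u^2=1$, have no mechanism behind them.

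The idea that works, and the paper's proof, uses the $\ell_1$-normalization $\|\bm{x}\|_1=1$ on a component $H$ with $\lambda(H)=\lambda$:
\begin{itemize}
\item Weighting $\sum_{u\in N(v)}d(u)=e(H)+t_v-s_v$ by $x_v$ gives $T-S=\lambda^2-e(H)$, so $T\ge\lambda^2-m$ because $S\ge0$.
\item Grouping $T=\sum_v x_vt_v$ by triangles gives $T\le3x_{\max}t(H)$.
\item The eigenvalue equation at a vertex $v^*$ of maximum weight gives $\lambda x_{\max}=\sum_{u\in N(v^*)}x_u\le1-x_{\max}$, i.e.\ $x_{\max}\le1/(1+\lambda)$. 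The $+1$ is exactly the fact that $v^*\notin N(v^*)$.
\end{itemize}

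For equality, there are two cases. If $t(G)>0$, equality forces $S=0$, so $H$ is complete multipartite. It also forces $\bm{x}$ constant on triangle vertices, so the parts are balanced, and $v^*$ dominating, so the parts are singletons; hence $G$ is complete. If $t(G)=0$, then $\lambda^2=m$, and Lemma~\ref{lem:triangle} shows $G$ is complete bipartite.
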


\subsection{Stability results at the two thresholds}

Once a threshold theorem is known, the natural question is what the near-extremal graphs look like. We
answer it at both thresholds: $\frac16 n$ and
$\frac13\sqrt m$. In each case, 
 the answer is a dichotomy:
a graph whose spectral radius is close to the threshold
either contains a book of almost extremal size, or it is
close in edit distance to the extremal graph. For a
prescribed order, the extremal graph is the bipartite Tur\'{a}n graph $T_{n,2}$ on the same vertex set $V(G)$. For a prescribed size, it is a complete bipartite graph $K_{A,B}$ with $A,B\subseteq V(G)$ (the two parts are not necessarily balanced).

\begin{theorem}\label{thm:edit-distance}
For every $\varepsilon>0$, there is $\delta>0$ such that
the following holds. If $G$ is an $n$-vertex graph with
$\lambda(G)\ge\bigl(\frac12-\delta\bigr)n$, then either
\[
\bk(G)\ge\Bigl(\frac16-\varepsilon\Bigr)n ,
\]
or $G$ can be changed into $T_{n,2}$ by adding and
deleting at most $\varepsilon n^2$ edges. 
\end{theorem}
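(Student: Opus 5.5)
Suppose $\bk(G)<(\frac16-\varepsilon)n$ and $\lambda=\lambda(G)\ge(\frac12-\delta)n$. The plan is to show that $G$ is close to $T_{n,2}$ in three stages: first show $G$ has few triangles, then invoke triangle removal together with Erd\H{o}s--Simonovits stability, and finally upgrade ``close to bipartite'' to ``close to balanced complete bipartite'' using the spectral radius.

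The first step converts the spectral hypothesis into edge information. Since $\lambda\ge\frac{2m}{n}$ gives no lower bound on $m$, we cannot argue from the edge count directly. Instead we use the edge-spectral results. By the refined Bollob\'as--Nikiforov bound (Theorem \ref{thm:BN-refined}), $t(G)\ge\frac13(\lambda+1)(\lambda^2-m)$. Since every triangle contributes to three edges, $3t(G)\le m\,\bk(G)$. Combining these,
\[
(\lambda^2-m)\lambda\le m\,\bk(G)<m\bigl(\tfrac16-\varepsilon\bigr)n.
\]
The other ingredient is Theorem \ref{thm:edge-spectral-intro}. If $\lambda\ge\sqrt m$ and $G$ is not complete bipartite, then $\bk(G)\ge\lambda-\frac{2m}{3\lambda}$. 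Together with $\bk<(\frac16-\varepsilon)n$ and $\lambda\approx n/2$, this yields $m>\frac32\lambda(\lambda-(\frac16-\varepsilon)n)\ge(\frac14-O(\delta))n^2+\frac34\varepsilon n^2$. That is more edges than allowed, since $m\le n\lambda_{\max}$-type bounds give $m\le n^2/2$, so this alone is not a contradiction. I would combine it with the easy bound $\lambda^2\le 2m(1-\frac1{\omega})$ and with the first inequality.

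More cleanly, I would use $t(G)\le\frac{m\,\bk}{3}$ and $t(G)\ge\frac13\lambda(\lambda^2-m)$ to obtain $m\ge\lambda^3/(\lambda+\bk)$. I expect this to force $m\ge(\frac14-O(\delta))n^2\cdot\frac{3}{4}\cdot\frac{1}{1-\ldots}$. The honest summary is that the spectral radius forces at least roughly $\lambda^2\cdot\frac{\lambda}{\lambda+\bk}$ edges, which is about $\frac{3}{16}n^2$, and hence a dense core.

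The second step handles the case where $m$ is not close to $n^2/4$. I would pass to the Perron vector $\mathbf{x}$ and use the weighted (spectral) analogue of \eqref{eq-BN} developed in Section \ref{sec:weighted}. This inequality reads roughly $(6\bk-n)\,\cdot$(weighted triangle count) $\ge \bk\,\cdot$(weighted excess). With $\bk\le(\frac16-\varepsilon)n$, the left-hand side is negative, of order $-\varepsilon n$ times the weighted triangle count. The inequality therefore forces the weighted excess $\lambda^2-\lambda\frac n2$-type quantity to be at most $-c\varepsilon$ times the weighted triangle density. Since $\lambda\ge(\frac12-\delta)n$, the excess is at least $-O(\delta)n^2$ in weighted form, so the weighted triangle density is $O(\delta/\varepsilon)n^3$. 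Restricting to the vertices with Perron weight $x_v\ge\eta$ (a linear-size set carrying most of $\lambda$), we get $o(n^3)$ triangles among them. By the triangle removal lemma, deleting $o(n^2)$ edges makes the subgraph on the heavy set triangle-free. Then $\lambda(G)\le\lambda(\text{triangle-free part})+o(n)$, so by Wilf/Nikiforov the triangle-free part has spectral radius $\ge(\frac12-o(1))n$. This forces $n$ vertices in total, meaning the heavy set is essentially all of $V(G)$.

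The third step is stability for triangle-free graphs. A triangle-free graph $H$ on $n$ vertices with $\lambda(H)\ge(\frac12-o(1))n$ satisfies $\lambda(H)^2\le e(H)$ by Nosal's theorem. Hence $e(H)\ge(\frac14-o(1))n^2$, and the Erd\H{o}s--Simonovits (F\"uredi) stability theorem makes $H$ $o(n^2)$-close to $T_{n,2}$. Adding back the $o(n^2)$ removed edges gives the edit-distance conclusion. Choosing $\delta$ small in terms of $\varepsilon$ via the removal lemma completes the argument.

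The main obstacle is the second step: extracting a genuine $o(n^3)$ triangle bound from the spectral booksize inequality. Everything must be done with Perron weights, since vertices of small weight may carry many triangles without affecting $\lambda$. I would first try to carry out that step using Theorem \ref{thm:edge-spectral-intro} applied to the heavy induced subgraph.
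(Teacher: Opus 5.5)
The decisive gap is the premise behind your Step 1. You say that since $\lambda\ge 2m/n$ gives no lower bound on $m$, the edge count cannot be used. You also say your bound $m>\frac32\lambda\bigl(\lambda-(\frac16-\varepsilon)n\bigr)$ ``is not a contradiction''. But that bound was derived under $m\le\lambda^2$. Compared with $m\le\lambda^2$, rather than with $m\le n^2/2$, it gives $\lambda<(\frac12-3\varepsilon)n$, which contradicts $\lambda\ge(\frac12-\delta)n$ once $\delta<3\varepsilon$. More simply, in that case $\bk(G)\ge\lambda/3\ge(\frac16-\frac{\delta}{3})n$.

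In the complementary case $m>\lambda^2\ge(\frac14-\delta)n^2$ you \emph{do} have the edge lower bound, so the classical stability theorem of Bollob\'as and Nikiforov (Theorem \ref{thm:BN-stability}) applies. The only leftover case is $G=K_{s,t}\cup rK_1$ with $st\ge(\frac14-\delta)n^2$, and AM--GM makes it near-spanning and near-balanced. This case split on $m$ versus $\lambda^2$ is the paper's entire proof (Theorem \ref{thm:direct} plus a rebalancing step). It yields the explicit $\delta=\min\{10^{-6},(\varepsilon/17)^3\}$.

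Your own route can be completed, but Step 2, which you flag as the obstacle, is not established as written. Let $H$ be the component with $\lambda(H)=\lambda$ and $h=e(H)$. Then \eqref{eq:TS} and \eqref{eq:rhoT} give $(\lambda-3\beta)T\le\beta(h-\lambda^2)$.

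\begin{itemize}
\item The control of $h-\lambda^2$ does not come from $\lambda\ge(\frac12-\delta)n$ alone. It comes from $h\le\lambda n/2$ (your ``$\lambda^2-\lambda\frac n2$-type quantity''), which gives $h-\lambda^2\le\lambda(\frac n2-\lambda)\le\frac{\delta}{2}n^2$. Hence $T<\frac{\delta n^2}{12(3\varepsilon-\delta)}$, which is of order $n^2$, not $n^3$, because $\|\bm{x}\|_1=1$.
\item Under this normalization the set $\{x_v\ge\eta\}$ has at most $1/\eta$ vertices. You need $L=\{v:x_v\ge\eta/n\}$, which gives $t(G[L])\le nT/(3\eta)$.
\item The claim that discarding light vertices costs only $o(n)$ needs an argument. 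Put $\bm{y}=\bm{x}/\|\bm{x}\|_2$ and use $\|\bm{x}\|_2\ge n^{-1/2}$. Then $\sum_{u\in V(H)\setminus L}y_u^2<\eta^2$, and the Rayleigh quotient gives $\lambda(G[L])\ge(1-2\eta^2)\lambda$.
\item Your fallback, applying Theorem \ref{thm:edge-spectral-intro} to $G[L]$, does not help, because nothing gives $\lambda(G[L])\ge\sqrt{e(G[L])}$.
\end{itemize}

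With these repairs, the removal lemma, Weyl's inequality, Nosal's theorem and F\"uredi's stability theorem do finish the argument. The price is a tower-type $\delta$.

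Even within your framework the removal lemma is unnecessary. Averaging $t_v+s_v$ against $\bm{x}$, as in Lemma \ref{lem:weighted-cut}, gives a vertex whose neighbourhood cut has only $O(\delta/\varepsilon)n^2$ edges inside its two sides. The same inequality forces $h\ge\lambda^2\ge(\frac14-\delta)n^2$. So the crossing bipartite graph is already close to $T_{n,2}$, with polynomial dependence of $\delta$ on $\varepsilon$.
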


Theorem \ref{thm:edit-distance} is the spectral
counterpart of the stability theorem of Bollob\'{a}s
and Nikiforov \cite[Theorem 6]{BN2005}. In fact, we
prove a sharper form with explicit exponents (Theorem
\ref{thm:direct}).

 Li, Liu and Zhang \cite[Theorem
4.1]{LLZ-edge-spectral} established a stability result
for Nosal's theorem: for every
$\varepsilon \in (0,0.01)$ there is
$\delta =\delta (\varepsilon )>0$ such that every
triangle-free graph with $m$ edges and
$\lambda (G)\ge (1-\delta )\sqrt m$ admits disjoint
sets $A,B\subseteq V(G)$ with
$|E(G)\,\triangle\, E(K_{A,B})|\le \varepsilon m$. 
The following stability version of \eqref{eq-ZYZZ}
implies it: a triangle-free graph has $\bk (G)=0$, so
the first alternative below fails and the second one
must hold. The proof gives an explicit
threshold 
$\delta =\frac{1}{1000}\min\{\varepsilon ,\frac16\}^3$.

\begin{theorem} \label{thm:edge-spectral-stability}
For every $\varepsilon >0$, there exists
$\delta >0$ such that the following holds. 
If $G$ is a graph with $m\ge 1$ edges and 
$\lambda (G)\ge  (1- \delta) \sqrt{m}$, then either
\[
\bk (G) \ge \Bigl(\frac{1}{3} - \varepsilon \Bigr)
\sqrt{m},
\]
or there exist disjoint sets $A,B\subseteq V(G)$ such
that
$\bigl|E(G)\, {\triangle}\, E(K_{A,B})\bigr|
\le \varepsilon m$.
\end{theorem}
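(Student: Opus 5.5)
We will prove Theorem \ref{thm:edge-spectral-stability} by contraposition. Fix $\varepsilon>0$ with $\varepsilon\le\frac16$. Assume $\lambda\ge(1-\delta)\sqrt m$ and $\bk(G)<(\frac13-\varepsilon)\sqrt m$, and deduce that $G$ is $\varepsilon m$-close to some $K_{A,B}$. Let $x$ be the unit Perron vector, so $\lambda=\sum_{uv\in E}2x_ux_v$.

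\textbf{Step 1 (few triangles).} Summing the booksize over edges gives
\[
3t(G)=\sum_{uv\in E}|N(u)\cap N(v)|\le m\,\bk(G)<\bigl(\tfrac13-\varepsilon\bigr)m^{3/2}.
\]
This count alone is far from forcing $t(G)=o(m^{3/2})$, so we must exploit the spectral slack. Revisit the proof of Theorem \ref{thm:edge-spectral-intro}, which we expect to proceed through a weighted inequality of the type \eqref{eq:spectral-BN}. In a weighted sum over edges of the codegree, with weights $x_ux_v$, the bound $\bk<\frac13\lambda$ produces a deficit. Concretely, we aim for an inequality of the form
\[
\sum_{uv\in E}x_ux_v\,|N(u)\cap N(v)|\ \ge\ \tfrac13\lambda\sum_{uv\in E}x_ux_v-(\text{error}),
\]
where the error is controlled by $\lambda^2-m$ and by a Perron-weighted triangle quantity. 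With $\lambda\ge(1-\delta)\sqrt m$, the deficit $\varepsilon\sqrt m$ in $\bk$ forces the weighted triangle mass $\sum_{uvw\in K_3}x_ux_vx_w$ to be small, say $O(\delta^{1/3}+\dots)$ after normalisation. This is the step where the explicit $\delta=\frac1{1000}\min\{\varepsilon,\frac16\}^3$ should come from.

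\textbf{Step 2 (near-equality in Nosal).} Use Nosal's argument quantitatively. We have $\lambda^2=\sum_v(\sum_{u\sim v}x_u)^2$, and $\lambda^2 x_v^2\le d(v)\sum_{u\sim v}x_u^2$ by Cauchy--Schwarz. A cleaner route is
\[
\lambda^2=\sum_{uv\in E}(\dots)\le m-\text{(loss)}+\text{(triangle terms)},
\]
that is, $\lambda^3=\sum_v x_v\,(A^3x)_v$, expanded as in the Bollob\'as--Nikiforov proof of $t\ge\frac13\lambda(\lambda^2-m)$. Near-equality yields two facts. First, for most of the Perron weight, adjacent vertices $u\sim v$ satisfy $N(u)\cap N(v)$ small. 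Second, $\sum_{u\sim v}x_u^2\approx$ const, so that $x$ is nearly constant on each side of a near-bipartition. Choose a vertex $w$ with large $x_w$, and set $A=N(w)$ and $B=V\setminus A$ restricted to non-isolated vertices, or alternatively $B=N(A)$-heavy vertices.

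\textbf{Step 3 (edit distance).} Show that $e(G[A])$ is small, since edges inside $N(w)$ form triangles with $w$ weighted by $x_w$. Show that $e(G[B])$ is small by a similar argument with a heavy vertex in $A$. Finally show $e(A,B)\ge|A||B|-\varepsilon m$, using $\lambda\le\sqrt{e(A,B)}+\text{small}$ together with $\lambda\ge(1-\delta)\sqrt m$. This forces both $e(A,B)\approx m$ and $\lambda(G[A,B])\approx\sqrt{e(A,B)}$. By the stability of $\lambda(H)\le\sqrt{e(H)}$ for bipartite $H$, near-equality forces $H$ to be close to a complete bipartite graph on its non-isolated part.

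\textbf{Main obstacle.} Step 1 is the hardest. Converting the booksize deficit into a small Perron-weighted triangle mass requires the weighted (spectral) form of \eqref{eq-BN} with explicit error terms. I would first try to copy the proof of Theorem \ref{thm:edge-spectral-intro} line by line, keeping every discarded nonnegative term as a quantitative slack.
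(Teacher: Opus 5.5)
Your architecture matches the paper's. You cut $V(G)$ along the neighbourhood of one vertex, show that few edges lie inside either side, pass to the crossing bipartite graph $F$, use a Weyl-type perturbation to get $\lambda(F)^2\approx e(F)\approx m$, and finish with a stability version of Nosal's bound for bipartite graphs. The gap is the step that carries all the weight: producing a vertex $v$ with $e(G[N(v)])+e(G[V\setminus N(v)])=o(m)$. You flag this yourself as the main obstacle, and what you sketch for it does not work.

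The inequality you aim for in Step 1, $\sum_{uv}x_ux_v|N(u)\cap N(v)|\ge\frac13\lambda\sum_{uv}x_ux_v-(\text{error})$, bounds a triangle mass from \emph{below}. Combined with $\bk(G)<\frac13\lambda$ it can only show that the error term is large, never that triangles are few. What is needed is a relation in which the weighted triangle mass appears on both sides, with coefficients $\lambda$ and $3\bk$.

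In the paper this comes from two facts for the $\ell_1$-normalised Perron vector of a component $H$ with $\lambda(H)=\lambda$. Write $h=e(H)$, $t_v=e(H[N(v)])$, $s_v=e(H[V(H)\setminus N[v]])$, $T=\sum_v x_vt_v$ and $S=\sum_v x_vs_v$.
\begin{itemize}
\item The exact identity $\sum_{u\in N(v)}d(u)=h+t_v-s_v$, which after weighting gives $T-S=\lambda^2-h$.
\item The local charging bound $\sum_{u\in N(v)}t_u\le\bk(H)(2t_v+s_v)$, which after weighting gives $\lambda T\le\bk(H)(2T+S)$.
\end{itemize}
Eliminating $S$ yields $(\lambda-3\bk(H))T\le\bk(H)(h-\lambda^2)$. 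Hence $\lambda>3\bk(H)$ forces $h\ge\lambda^2$ and $T<\delta m/(3\varepsilon)$, and then $S=T+(h-\lambda^2)$ is small as well.

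Your Step 2 does not replace this. Nosal's inequality is only valid for triangle-free graphs, and $G$ may have $\Theta(m^{3/2})$ triangles. So there is no near-equality to exploit without an identity like $\lambda^2=h+T-S$ that accounts for them. The Bollob\'as--Nikiforov bound is vacuous when $\lambda^2\le m$. The consequences you claim (a Perron vector nearly constant on the sides of some near-bipartition) are neither derived nor needed.

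Step 3 has further problems, even granting a bound on the weighted triangle mass.
\begin{itemize}
\item Taking $w$ of maximal Perron weight only gives $t_w\le T/x_w$. The available information is an upper bound on the weighted sum $T$, and $x_{\max}$ can be of order $1/\sqrt m$ (it is $\frac1{2a}$ for $K_{a,a}$ and close to that for graphs near it). You therefore lose a factor $\sqrt m$ and get only $t_w=O(\delta m^{3/2}/\varepsilon)$, not $o(m)$.
\item The right choice is by averaging. Since $x_v>0$ and $\sum_v x_v=1$, some $v$ satisfies $t_v+s_v\le T+S=2T+(h-\lambda^2)$. Here $t_v+s_v$ is exactly the number of edges inside the two sides of the cut $N(v)\sqcup(V(H)\setminus N(v))$.
\item Your plan to bound $e(G[B])$ by ``a similar argument with a heavy vertex in $A$'' cannot work. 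Edges inside $V\setminus N(w)$ are not triangles through any single vertex; only the identity $T-S=\lambda^2-h$ controls them.
\item You neither need nor should expect $e(A,B)\ge|A||B|-\varepsilon m$ for the cut sets themselves. The sets in the conclusion must be re-chosen by the bipartite stability lemma applied to $F$. The paper takes $N(v),N(u)$ for a suitable edge $uv$ of $F$ and proves edit distance at most $3(e(F)-\lambda(F)^2)$ via $2\times2$ minors. Citing Li--Liu--Zhang's lemma instead also works, at the cost of a polynomially worse $\delta$.
\end{itemize}
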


The two statements have different target graphs, and
this is not an artifact of the proofs. Under a
condition on the order, $T_{n,2}$ is the unique
extremal graph, so the balanced complete bipartite
graph is the only possible model. Under a condition on
the size, every $K_{a,b}$ satisfies $\lambda ^2=m$, so
no balance can be forced and the model must be an
arbitrary complete bipartite graph.

A by-product of the proof of Theorem \ref{thm:edge-spectral-stability} is of independent interest.
Lemma \ref{lem:bipartite-completion} shows that every
bipartite graph $G$ satisfies
$\bigl|E(G)\,\triangle\, E(K_{X,Y})\bigr|\le
3(m-\lambda ^2)$ for some $X,Y\subseteq V(G)$; that is, the
edit distance to a complete bipartite graph is
bounded by a linear function of the spectral deficit.
This sharpens the corresponding step of
\cite[Lemma 4.4]{LLZ-edge-spectral}, where the
dependence is of order $\varepsilon ^4$.

\paragraph{Outline of the method.}
Theorems \ref{thm-confirm-Zhai-Lin}, \ref{thm:fixed-order-intro},
\ref{thm:edge-spectral-intro} and \ref{thm:two-lambda} all
come from three relations for a single pair of
Perron-weighted statistics, and the same relations drive
Theorem \ref{thm:BN-refined} and the two stability
theorems. Let $G$ be connected with
$m$ edges, let $\bm{x}$ be a Perron vector of $G$ normalized
by $\|\bm{x}\|_1=1$, and put $\beta =\bk (G)$. For a vertex
$v$, let $t_v$ be the number of triangles through $v$ and let
$s_v$ be the number of edges with both ends outside $N[v]$,
and set
\[
T=\sum_{v\in V(G)}x_vt_v,
\qquad
S=\sum_{v\in V(G)}x_vs_v .
\]
Counting the edges met by $N(v)$ and weighting by $x_v$
yields the identity
\begin{equation}\label{eq:intro-TS}
T-S=\lambda ^2-m,
\end{equation}
which converts the spectral surplus $\lambda ^2-m$ into a
weighted count of triangles. A charging argument over the
triangles meeting $N(v)$ yields
\begin{equation}\label{eq:intro-charge}
\lambda \,T\le \beta \,(2T+S),
\end{equation}
and summing codegrees at each vertex yields
\begin{equation}\label{eq:intro-cap}
T\le \tfrac12\,\beta \lambda .
\end{equation}
Eliminating $S$ between \eqref{eq:intro-TS} and
\eqref{eq:intro-charge} gives
$(3\beta -\lambda )T\ge \beta (\lambda ^2-m)$, a
Perron-weighted analogue of \eqref{eq-BN}. Since $T>0$ whenever $\lambda ^2\ge m$ and $G$ is not complete bipartite,
this alone gives $\beta \ge \frac13\lambda $; in this form the argument is that of Zhao, You, Zeng and Zhang \cite{ZYZZ2026}. 
Feeding \eqref{eq:intro-cap} into it gives Theorem
\ref{thm:edge-spectral-intro}, while \eqref{eq:intro-TS} and \eqref{eq:intro-cap} alone give Theorem
\ref{thm:two-lambda}.  Theorems \ref{thm-confirm-Zhai-Lin} and \ref{thm:fixed-order-intro} then follow from
$\lambda \ge \frac{2m}{n}$, except in the range
$\lambda <\sqrt m$, where $\lambda \ge \lambda (T_{n,2})$ forces $m>\lfloor n^2/4\rfloor$ and the Edwards bound applies directly. The same three relations also drive both stability results.

\paragraph{Organization.} 
The rest of the paper is organized as follows. 
Section \ref{sec:weighted} develops the Perron-weighted identities and the local charging lemma, proves Theorems \ref{thm:edge-spectral-intro}, \ref{thm:two-lambda} and
\ref{thm:BN-refined} together with the equality case of Theorem \ref{thm:edge-spectral-intro} (Proposition
\ref{prop:equality-edge}), and then derives Theorems
\ref{thm-confirm-Zhai-Lin} and \ref{thm:fixed-order-intro} from the edge-spectral bounds.  
Section \ref{sec:sharpness} characterizes the equality case of Theorem \ref{thm-confirm-Zhai-Lin}, presents four families of extremal graphs, and shows that Theorems \ref{thm:fixed-order-intro}, \ref{thm:edge-spectral-intro} and \ref{thm:two-lambda} are best possible. 
Section \ref{sec:stability} proves the two stability results, Theorems \ref{thm:edit-distance} and
\ref{thm:edge-spectral-stability}. Section
\ref{sec:conclusion} collects concluding remarks and open
problems, and Appendix \ref{sec:App} gives an alternative
proof of the Bollob\'as--Nikiforov inequality \eqref{eq-BN}.

\section{A Perron-weighted counting method}

\label{sec:weighted}

This section develops the counting method behind all our
results. It rests on one identity and two inequalities
relating two Perron-weighted quantities $T$ and $S$, which
we define and establish first; the proofs of the main
theorems follow in the remaining subsections.

\subsection{Three basic relations} 
\label{sec:three-ineq}

Throughout this subsection, $G$ is a connected graph with
$m\ge 1$ edges, $\lambda=\lambda(G)$ and $\beta=\bk(G)$,
and $\bm{x}=(x_v)_{v\in V(G)}$ is a Perron vector of
$G$ normalized so that $\|\bm{x}\|_1=1$. Since $G$ is
connected, the Perron--Frobenius theorem gives $x_v>0$ for
every $v\in V(G)$. For an edge $uv$, we write 
$$t_{uv}=|N(u)\cap N(v)|,$$ 
so that $t_{uv}\le \beta$. For a
vertex $v$, we put
\[
t_v=e\bigl(G[N(v)]\bigr),
\qquad
s_v=e\bigl(G[\overline{N[v]} ]\bigr),
\]
so $t_v$ is the number of triangles containing $v$ and
$s_v$ is the number of edges having both ends outside
$N[v] :=N(v)\cup \{v\}$. Finally, we set 
\[
T=\sum_{v\in V(G)}x_v t_v,
\qquad
S=\sum_{v\in V(G)}x_v s_v .
\]
We write $\T(G)$ for the set of triangles of $G$.

\begin{lemma}\label{lem:degree-neighborhood}
For every vertex $v$ of $G$,
\begin{equation}\label{eq:degree-neighborhood}
\sum_{u\in N(v)}d(u)=m+t_v-s_v .
\end{equation}
Consequently,
\begin{equation}\label{eq:TS}
T-S=\lambda^2-m .
\end{equation}
\end{lemma}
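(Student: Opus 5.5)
The plan is to prove the pointwise identity \eqref{eq:degree-neighborhood} by double counting, and then obtain \eqref{eq:TS} by weighting with the Perron vector.

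\textbf{Step 1: the identity at a vertex.} Fix $v$ and partition $E(G)$ according to how many endpoints lie in $N(v)$. Write $e_2$, $e_1$, $e_0$ for the numbers of edges with two, one, and zero ends in $N(v)$. Each edge with both ends in $N(v)$ is counted twice in $\sum_{u\in N(v)}d(u)$, each edge with one end there is counted once, and the rest are not counted. Hence
\[
\sum_{u\in N(v)}d(u)=2e_2+e_1=m+e_2-e_0 .
\]
By definition $e_2=t_v$. For $e_0$: an edge with no end in $N(v)$ either has both ends in $\overline{N[v]}$, or has $v$ as an endpoint. The second case is impossible, since every edge at $v$ has its other end in $N(v)$. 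So $e_0=s_v$, which gives \eqref{eq:degree-neighborhood}. The only point needing care is exactly this observation that $v$ itself contributes no edges to $e_0$.

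\textbf{Step 2: weighting.} Multiply \eqref{eq:degree-neighborhood} by $x_v$ and sum over $v$. On the right we get $m\sum_v x_v+T-S=m+T-S$, using $\|\bm{x}\|_1=1$. On the left, swap the order of summation:
\[
\sum_v x_v\sum_{u\sim v}d(u)=\sum_u d(u)\sum_{v\sim u}x_v=\lambda\sum_u d(u)x_u .
\]
Then swap once more:
\[
\sum_u d(u)x_u=\sum_u\sum_{w\sim u}x_u=\sum_w\sum_{u\sim w}x_u=\lambda\sum_w x_w=\lambda .
\]
So the left side equals $\lambda^2$, and therefore $T-S=\lambda^2-m$.

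Both steps are routine, and connectivity is used only to have a positive normalized Perron vector. The step most prone to error is the bookkeeping of edges at $v$ in Step 1.
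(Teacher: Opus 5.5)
Your proposal is correct and follows essentially the paper's proof. The paper proves the local identity by the same classification of edges according to how many ends lie in $N(v)$. It then obtains $T-S=\lambda^2-m$ in the same way: weight by $x_v$, swap the order of summation, and use $\sum_u d(u)x_u=\lambda$, which comes from summing the eigenvalue equations.
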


\begin{proof}
On the left-hand side of
\eqref{eq:degree-neighborhood} an edge with both ends
in $N(v)$ is counted twice, an edge with both ends
outside $N(v)\cup \{v\}$ is counted not at all, and every other
edge is counted once. This is exactly the count on the
right-hand side.

Multiplying \eqref{eq:degree-neighborhood} by $x_v$ and
summing over $v$ gives
\[
m+T-S
=\sum_{v\in V(G)}x_v\sum_{u\in N(v)}d(u)
=\sum_{u\in V(G)}d(u)\sum_{v\in N(u)}x_v
=\lambda\sum_{u\in V(G)}d(u)x_u ,
\]
where we used $\|\bm{x}\|_1=1$ in the first equality
and $A(G)\bm{x}=\lambda\bm{x}$ in the last. Summing the
eigenvalue equations over all vertices gives
$\sum_u d(u)x_u=\lambda\sum_u x_u=\lambda$, and
\eqref{eq:TS} follows.
\end{proof}

The identity \eqref{eq:TS} and the inequality \eqref{eq:rhoT}
below are, in vertex-local form, the two ingredients of the
proof of $\bk(G)\ge\frac13\lambda (G)$ by Zhao, You, Zeng and
Zhang \cite[Section 3]{ZYZZ2026}, who sum the eigenvalue
equations over the vertices of each triangle; Lemma
\ref{lem:triangle} likewise corresponds to
\cite[Lemma 3.2]{ZYZZ2026}. Both ingredients are
Perron-weighted forms of the four-vertex configuration count
of Bollob\'as and Nikiforov \cite{BN2005}, which goes back to
Khad\v{z}iivanov and Nikiforov \cite{KN1979}; see the
beginning of the proof of \cite[Theorem 1]{BN2005}, and
\cite[Lemma 3]{DNP2026+} for an argument in the same spirit.
The new ingredient is the cap \eqref{eq:T-upper} of Lemma
\ref{lem:T-upper}. The argument of \cite{ZYZZ2026} discards
the surplus $\lambda^2-m$ at the last step; the cap converts
it into a gain in the booksize, and this is what yields
Theorems \ref{thm:edge-spectral-intro} and
\ref{thm:two-lambda}. 

\begin{lemma}[Local charging]\label{lem:charging}
For every vertex $v$ of $G$,
\begin{equation}\label{eq:local}
\sum_{u\in N(v)}t_u\le \beta \,(2t_v+s_v).
\end{equation}
Consequently,
\begin{equation}\label{eq:rhoT}
\lambda\,T\le \beta \,(2T+S).
\end{equation}
\end{lemma}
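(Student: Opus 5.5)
We prove the local inequality \eqref{eq:local} by a charging argument over triangles, assigning each unit of the left-hand side to an edge of $G[N(v)]$ or of $G[\overline{N[v]}]$. We then obtain \eqref{eq:rhoT} by Perron-weighting, exactly as in the proof of Lemma~\ref{lem:degree-neighborhood}.

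Fix $v$. Each $t_u$ counts triangles through $u$, so
\[
\sum_{u\in N(v)}t_u=\sum_{\Delta\in\T(G)}|\Delta\cap N(v)|,
\]
the number of pairs $(u,\Delta)$ with $u\in\Delta\cap N(v)$. We charge these pairs to edges, case by case.

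\emph{Case $v\in\Delta$.} Then $\Delta=\{v,u,w\}$ with $u,w\in N(v)$, and $\Delta$ contributes exactly $2$ pairs. We charge both to the edge $uw$ of $G[N(v)]$.

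\emph{Case $v\notin\Delta$.} Put $k=|\Delta\cap N(v)|$; the vertices of $\Delta$ outside $N(v)$ lie outside $N[v]$.
\begin{itemize}
\item If $k=0$, there is nothing to charge.
\item If $k=1$, say $\Delta=\{u,w,z\}$ with $u\in N(v)$ and $w,z\notin N[v]$. We charge the single pair to the edge $wz$ of $G[\overline{N[v]}]$.
\item If $k=2$, $\Delta$ contains exactly one edge $e$ of $G[N(v)]$. We charge both pairs to $e$.
\item If $k=3$, $\Delta$ contains three edges of $G[N(v)]$. We charge each of its three pairs to a different one of them, one pair per edge.
\end{itemize}

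Now bound the charge received by each edge.

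An edge $e=uw$ of $G[N(v)]$ receives at most $2$ from each triangle containing it. Indeed, the triangle $\{v,u,w\}$ gives $2$, a triangle with $k=2$ gives $2$, and a triangle with $k=3$ gives $1$. Since there are exactly $t_{uw}$ triangles through $e$, the charge on $e$ is at most $2t_{uw}\le 2\beta$.

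An edge $wz$ of $G[\overline{N[v]}]$ receives $1$ from each triangle $\{u,w,z\}$ with $u\in N(v)$. Its charge is therefore at most $t_{wz}\le\beta$.

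Summing over the $t_v$ edges of $G[N(v)]$ and the $s_v$ edges of $G[\overline{N[v]}]$ gives \eqref{eq:local}.

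For \eqref{eq:rhoT}, multiply \eqref{eq:local} by $x_v>0$ and sum over $v$. Exchanging the order of summation on the left and using $A(G)\bm{x}=\lambda\bm{x}$,
\[
\sum_v x_v\sum_{u\in N(v)}t_u=\sum_u t_u\sum_{v\in N(u)}x_v=\lambda\sum_u x_ut_u=\lambda T.
\]
The right-hand side sums to $\beta(2T+S)$ by the definitions of $T$ and $S$, which gives \eqref{eq:rhoT}.

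The only delicate point is the charging bookkeeping for edges inside $N(v)$: we must check that the triangle through $v$ and the triangles avoiding $v$ together never push the per-triangle charge on such an edge above $2$. The $k=3$ rule, spreading the three pairs over three distinct edges, is what guarantees this.
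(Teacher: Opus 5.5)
Your proof is correct and is essentially the paper's argument. The paper proves the per-triangle bound $|V(K)\cap N(v)|\le 2a_K+c_K$, where $a_K$ and $c_K$ count the edges of $K$ inside $N(v)$ and inside $\overline{N[v]}$, and then uses $\sum_K a_K\le\beta t_v$ and $\sum_K c_K\le\beta s_v$. That is your charging scheme expressed as per-edge capacities rather than explicit assignments, and your Perron-weighting step is identical to the paper's.
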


\begin{proof}
The left-hand side of \eqref{eq:local} counts the pairs
consisting of a triangle and one of its vertices lying
in $N(v)$, so
\begin{equation}\label{eq:inter-step}
\sum_{u\in N(v)}t_u
=\sum_{K\in\T(G)}\bigl|V(K)\cap N(v)\bigr| .
\end{equation}
For $K\in\T(G)$ let $a_K$ be the number of edges of $K$
inside $N(v)$, and let $c_K$ be the number of edges of
$K$ inside $V(G)\setminus (N(v)\cup \{v\})$. We claim that
\begin{equation}\label{eq:pointwise}
\bigl|V(K)\cap N(v)\bigr|\le 2a_K+c_K .
\end{equation}
If $v\in V(K)$, then the other two vertices of $K$ lie
in $N(v)$, and the two sides of \eqref{eq:pointwise}
equal $2$ and $2\cdot 1+0$. If $v\notin V(K)$, put
$k=|V(K)\cap N(v)|$; then $a_K=\binom{k}{2}$ and
$c_K=\binom{3-k}{2}$, so for $k=0,1,2,3$ the right-hand
side of \eqref{eq:pointwise} equals $3,\,1,\,2,\,6$,
each of which is at least $k$.

Summing \eqref{eq:pointwise} over all triangles and
using \eqref{eq:inter-step},
\[
\sum_{u\in N(v)}t_u
\le 2\sum_{K\in\T(G)}a_K+\sum_{K\in\T(G)}c_K .
\]
There are $t_v$ edges inside $N(v)$ and each of them
lies in at most $\beta $ triangles, so
$\sum_{K}a_K\le \beta \,t_v$; likewise
$\sum_{K}c_K\le \beta \,s_v$. This proves \eqref{eq:local}.

Multiplying \eqref{eq:local} by $x_v$ and summing over
$v$, the left-hand side becomes
\[
\sum_{v\in V(G)}x_v\sum_{u\in N(v)}t_u
=\sum_{u\in V(G)}t_u\sum_{v\in N(u)}x_v
=\lambda\sum_{u\in V(G)}t_ux_u=\lambda T ,
\]
while the right-hand side becomes $\beta (2T+S)$. This is
\eqref{eq:rhoT}.
\end{proof}

\begin{lemma}\label{lem:T-upper}
We have
\begin{equation}\label{eq:T-upper}
T\le\frac{\beta \lambda}{2}.
\end{equation}
\end{lemma}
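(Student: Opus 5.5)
We aim to show $T\le\beta\lambda/2$ by summing codegrees at each vertex. Write $t_v=e(G[N(v)])$ as a sum over neighbours: each edge $uw$ inside $N(v)$ is counted once from $u$ and once from $w$. This gives the identity
\[
2t_v=\sum_{u\in N(v)}|N(u)\cap N(v)|=\sum_{u\in N(v)}t_{uv}.
\]
Since $uv$ is an edge, $t_{uv}\le\beta$. Hence $2t_v\le\beta\,d(v)$ for every vertex $v$.

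Next, multiply by $x_v$ and sum over $v$:
\[
2T=\sum_{v}x_v\sum_{u\in N(v)}t_{uv}\le\beta\sum_v d(v)x_v .
\]
From the proof of Lemma \ref{lem:degree-neighborhood}, summing the eigenvalue equations gives $\sum_v d(v)x_v=\sum_u\sum_{v\in N(u)}x_v=\lambda\sum_u x_u=\lambda$, using $\|\bm{x}\|_1=1$. Therefore $2T\le\beta\lambda$, which is \eqref{eq:T-upper}.

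There is no real obstacle here. The only points to check are the double-counting identity $\sum_{u\in N(v)}t_{uv}=2t_v$ and the normalization $\sum_v d(v)x_v=\lambda$. Both are immediate, and the second is already established in Lemma \ref{lem:degree-neighborhood}.
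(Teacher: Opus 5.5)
Your proof is correct and is essentially the paper's argument: the double count $2t_v=\sum_{u\in N(v)}t_{uv}\le\beta\,d(v)$ is weighted by $x_v$ and summed using $\sum_v d(v)x_v=\lambda$. The step of multiplying by $x_v$ uses $x_v\ge 0$, which you did not state; it holds because $\bm{x}$ is the Perron vector.
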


\begin{proof}
For a fixed vertex $v$, the sum of the codegrees over
the edges incident with $v$ counts every triangle
containing $v$ twice, so
$2t_v=\sum_{u\in N(v)}t_{uv}\le \beta\,d(v)$. Multiplying
by $x_v$ and summing over $v$ gives
$2T\le \beta \sum_v d(v)x_v=\beta \lambda$.
\end{proof}

\begin{lemma}\label{lem:triangle}
Suppose that $m\le\lambda^2$. If $G$ is not complete
bipartite, then $T>0$; in particular $G$ contains a
triangle.
\end{lemma}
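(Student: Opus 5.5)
We argue by contraposition: assume $T=0$ and show that $G$ is complete bipartite. Since every $x_v>0$, $T=\sum_v x_vt_v=0$ forces $t_v=0$ for all $v$, i.e.\ $G$ is triangle-free; conversely, $T>0$ gives some $t_v>0$, hence a triangle, which is the ``in particular'' clause. So it suffices to prove: if $G$ is connected, triangle-free and $m\le\lambda^2$, then $G$ is complete bipartite.

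By the identity \eqref{eq:TS} of Lemma \ref{lem:degree-neighborhood}, $T=0$ gives $-S=\lambda^2-m\ge 0$. Since $S=\sum_v x_vs_v$ is a sum of nonnegative terms, we get $S=0$ (and in fact $\lambda^2=m$). Positivity of the Perron vector then yields $s_v=0$ for every vertex $v$. In words, for every $v$ there is no edge with both ends outside $N[v]$, so every edge of $G$ meets $N[v]$.

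It remains to translate ``triangle-free and every edge meets $N[v]$ for every $v$'' into complete bipartiteness. First, $N(v)$ is independent because $G$ is triangle-free. Next, I claim $R_v:=V(G)\setminus N[v]$ is independent: an edge inside $R_v$ would have both ends outside $N[v]$, contradicting $s_v=0$. Fix an edge $uv$ and put $A=N(v)$, which contains $u$, and $B=N(u)$, which contains $v$. These sets are disjoint since $G$ has no triangle, and each is independent. Moreover $V(G)\setminus A = R_v\cup\{v\}$, and $v$ has no neighbour in $R_v$ by definition, so $V(G)\setminus A$ is independent as well. Hence $G$ is bipartite with parts $A$ and $V(G)\setminus A$.

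For completeness of the bipartition, take $a\in A$ and $b\notin A$, and suppose they are not adjacent. Then $b\in R_a\cup\{a\}$, and $b\ne a$, so $b\in R_a$. Since $G$ is connected and has no isolated vertices, $b$ has a neighbour $w$, and $w\in A$ because $G$ is bipartite. The edge $bw$ must meet $N[a]$, and $b\notin N[a]$, so $w\in N[a]$. But $w\in A$ and $A$ is independent, so $w\notin N(a)$; thus $w=a$, contradicting $a\not\sim b$. Therefore every vertex of $A$ is adjacent to every vertex outside $A$, and $G=K_{A,V\setminus A}$ is complete bipartite.

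The only delicate point is this last combinatorial step, where one must handle the case $w=a$ correctly. The spectral content of the lemma is entirely carried by the identity \eqref{eq:TS} together with the positivity of $\bm{x}$.
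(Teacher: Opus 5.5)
Your proof is correct and follows the paper's argument: $T=0$ forces $G$ to be triangle-free, and then \eqref{eq:TS} together with the positivity of $\bm{x}$ gives $S=0$, hence $s_w=0$ for every $w$; a short combinatorial step then yields complete bipartiteness. The only difference is in that final step: you get the bipartition from the independence of $V(G)\setminus N(v)=R_v\cup\{v\}$ and use a neighbour of $b$ (which exists by connectivity), whereas the paper first shows that every edge meets $N(w)$ for each $w$, deduces $V(G)=N(u)\cup N(v)$, and then applies this to the edge $vb$.
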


\begin{proof}
Suppose that $T=0$. Since $x_v>0$ for every $v$, we
have $t_v=0$ for every $v$, so $G$ is triangle-free. By
\eqref{eq:TS}, $S=m-\lambda^2\le 0$, while $S\ge 0$
because every $s_v$ is nonnegative. Hence $S=0$, so $s_w=0$ for every $w\in V(G)$: every edge
of $G$ has an end in $N[w]$. An edge incident with $w$ has  its other end in $N(w)$, and an edge not incident with $w$ meets $N[w]$ only in $N(w)$. Therefore
\begin{equation}\label{eq:hit}
\text{every edge of }G\text{ meets }N(w)
\quad\text{for every }w\in V(G).
\end{equation} 
Fix an edge $uv$ and put $A=N(u)$ and $B=N(v)$. As $G$
is triangle-free, we have 
$A\cap B=\varnothing$ and both $A$
and $B$ are independent; moreover $v\in A$ and $u\in B$.
For every vertex $w$, applying \eqref{eq:hit} to the edge
$uv$ shows that $u\in N(w)$ or $v\in N(w)$, that is,
$w\in N(u)\cup N(v)$. Hence $V(G)=A\cup B$.

Let $a\in A$ and $b\in B$. Then $vb\in E(G)$. If $a=v$,
then $ab=vb\in E(G)$. If $a\ne v$, then $a,v\in A$ and
$A$ is independent, so $v\notin N(a)$; applying
\eqref{eq:hit} to the edge $vb$ gives $b\in N(a)$. In
either case $ab\in E(G)$. Therefore $G=K_{A,B}$ is
complete bipartite, a contradiction.
\end{proof}

Applied to the components of a triangle-free graph, Lemma
\ref{lem:triangle} recovers the equality case of Nosal's
theorem, which is due to Nikiforov \cite{Nikiforov2006walks}:
a triangle-free graph with $m$ edges and $\lambda(G)\ge\sqrt m$
is the disjoint union of a complete bipartite graph and isolated vertices.

\subsection{Proofs of Theorems \ref{thm:edge-spectral-intro} and \ref{thm:two-lambda}}

The proofs in this subsection apply to arbitrary graphs and
determine the extremal graphs.

\begin{proof}[{\bf Proof of Theorem
\ref{thm:edge-spectral-intro}}]
Write $\lambda=\lambda(G)$, choose a component $H$ of
$G$ with $\lambda(H)=\lambda$, and put $h=e(H)$ and
$\beta =\bk(H)$. Then
$ h\le m\le\lambda^2 $.

If $H$ were a complete bipartite graph $K_{p,q}$,
then $\lambda^2=pq=h$, so $h=m$ and every component
of $G$ other than $H$ is a single vertex; that is,
$G$ is the disjoint union of a complete bipartite
graph and isolated vertices, contrary to the
hypothesis. Hence $H$ is not complete bipartite.

Apply the notation above to $H$, and put
$\delta=\lambda^2-h\ge 0$, so that \eqref{eq:TS} reads
$T-S=\delta$. By Lemma \ref{lem:triangle}, $T>0$ and
$H$ contains a triangle, whence $\beta \ge 1$. Substituting
$S=T-\delta$ into \eqref{eq:rhoT} gives
$\lambda T\le \beta (3T-\delta)$, that is,
\begin{equation}\label{eq:first-chain}
(3\beta -\lambda)\,T\ \ge\ \beta \,\delta\ \ge\ 0 .
\end{equation}
Since $T>0$, we obtain $3\beta -\lambda\ge 0$. Multiplying
\eqref{eq:T-upper} by $3\beta -\lambda\ge 0$ and combining
the result with \eqref{eq:first-chain},
\[
\beta \,\delta\ \le\ (3\beta -\lambda)\,T\ \le\
\frac{\beta \lambda(3\beta -\lambda)}{2}.
\]
Cancelling $\beta >0$ gives
$2\lambda^{2}-2h\le 3\beta \lambda-\lambda^{2}$, and
therefore
\[
\bk(G)\ \ge\ \beta \ \ge\
\lambda-\frac{2h}{3\lambda}\ \ge\
\lambda-\frac{2m}{3\lambda},
\]
the last step because $h\le m$.
\end{proof} 

The proof of Theorem
\ref{thm:edge-spectral-intro} yields a
spectral analogue of the Bollob\'as--Nikiforov
inequality \eqref{eq-BN}. Indeed, for a connected
graph $G$, combining Lemmas
\ref{lem:degree-neighborhood} and \ref{lem:charging}
yields
\begin{equation}\label{eq:spectral-BN}
\bigl(3\bk(G)-\lambda\bigr)\,T
\ \ge\ \bk(G)\,\bigl(\lambda^{2}-m\bigr),
\end{equation}
where $T=\sum_{K\in\T(G)}\sum_{v\in V(K)}x_{v}$ is the
Perron-weighted number of triangles. To compare
\eqref{eq:spectral-BN} with \eqref{eq-BN}, note that
summing \eqref{eq:degree-neighborhood} over all vertices
with uniform weights gives
$\sum_{v}t_{v}-\sum_{v}s_{v}=Q-mn$, while
$\sum_{v}t_{v}=3t(G)$. In this sense
\eqref{eq:spectral-BN} is a Perron-weighted analogue of
\eqref{eq-BN}: the counting measure is replaced by the
Perron measure, the average degree $2m/n$ by $\lambda$, and
the degree deficit $Q-mn$ by the spectral deficit
$\lambda^{2}-m$. Moreover,
\eqref{eq:spectral-BN} is sharp: a direct computation
shows that every graph $H_{s,t}$ of Example
\ref{ex:EFR} and the Tur\'an graph $T_{3k,3}$ attain
equality.

\smallskip 
The proof of Theorem \ref{thm:edge-spectral-intro} also
determines the graphs attaining equality above the Nosal
threshold. For a vertex $v$, we write $G-N[v]$ for the graph
obtained from $G$ by deleting $N[v]$.

\begin{proposition}\label{prop:equality-edge}
Let $G$ be a graph with $m\ge1$ edges, and let $\lambda=\lambda(G)$ and $\beta=\bk(G)$.
Suppose that $\lambda^2>m$. Then
\[
\beta=\lambda-\frac{2m}{3\lambda}
\]
if and only if $G$ is connected and $K_4$-free, every edge of
$G$ lies in exactly $\beta$ triangles, and $G-N[v]$ is
triangle-free for every vertex $v$.
\end{proposition}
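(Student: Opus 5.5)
Both directions go through the chain of inequalities in the proof of Theorem~\ref{thm:edge-spectral-intro}, which is now forced to be tight at every step.

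\textbf{Forward direction.} Assume $\beta=\lambda-\frac{2m}{3\lambda}$. Take a component $H$ with $\lambda(H)=\lambda$, and write $h=e(H)$ and $\beta_H=\bk(H)$. Since $\lambda^2>m\ge h$, the graph $H$ is not complete bipartite. The proof of Theorem~\ref{thm:edge-spectral-intro} gives
\[
\beta\ \ge\ \beta_H\ \ge\ \lambda-\frac{2h}{3\lambda}\ \ge\ \lambda-\frac{2m}{3\lambda}.
\]
So all of these are equalities, which forces $h=m$; since $G$ has no isolated vertices, $G=H$ is connected.

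Equality in the middle step means equality in both \eqref{eq:first-chain} and \eqref{eq:T-upper}. Here $\delta=\lambda^2-m>0$ and $\beta>0$, so we may cancel safely. Equality throughout the chain gives $(3\beta-\lambda)T=\beta\delta$ together with $T=\beta\lambda/2$.
\begin{itemize}
\item Equality in \eqref{eq:T-upper}, weighted by the positive $x_v$, gives $2t_v=\beta d(v)$ for every $v$. This means every edge $uv$ has $t_{uv}=\beta$.
\item Equality in the first-chain step is equality in \eqref{eq:rhoT}. By positivity of $x_v$, this forces equality in \eqref{eq:local} for every $v$. That in turn needs equality in \eqref{eq:pointwise} for every triangle $K$ and every $v$ (the bounds $\sum a_K\le\beta t_v$ and $\sum c_K\le\beta s_v$ are already tight, since every edge lies in exactly $\beta$ triangles).
\end{itemize}

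Now read off the cases of \eqref{eq:pointwise} with $v\notin V(K)$. The values $k=0,1,2,3$ give slack $3,0,0,3$. So no triangle has $k=0$, meaning $G-N[v]$ is triangle-free. And no triangle has $k=3$, meaning no $v$ is complete to a triangle, i.e.\ $G$ is $K_4$-free.

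\textbf{Converse.} Assume $G$ is connected and $K_4$-free, every edge lies in exactly $\beta$ triangles, and each $G-N[v]$ is triangle-free. Then for every $v$ and every triangle $K$ we have $k\in\{1,2\}$ when $v\notin V(K)$. Together with the exact codegrees, this makes \eqref{eq:local} an equality for every $v$, so \eqref{eq:rhoT} is an equality: $\lambda T=\beta(2T+S)$. Likewise $2t_v=\beta d(v)$ for all $v$, so $T=\beta\lambda/2$.

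Substitute $S=T-\delta$ from \eqref{eq:TS}:
\[
\lambda\cdot\frac{\beta\lambda}{2}=\beta\Bigl(\frac{3\beta\lambda}{2}-\delta\Bigr).
\]
This requires $\beta>0$, which holds because Lemma~\ref{lem:triangle} gives a triangle (here $\lambda^2>m$ rules out complete bipartite graphs). Cancelling $\beta$ gives $\lambda^2=3\beta\lambda-2(\lambda^2-m)$, that is, $\beta=\lambda-\frac{2m}{3\lambda}$.

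\textbf{Main obstacle.} The delicate step is justifying that equality in the weighted sums transfers to pointwise equality. This uses $x_v>0$, which requires connectivity. In the forward direction connectivity is not assumed, so it must first be obtained from $h=m$; note also that $\beta$ is $\bk(G)$, not $\bk(H)$, and the chain handles both at once.
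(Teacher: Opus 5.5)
Your proposal is correct and takes essentially the same route as the paper. You reduce to $G=H$ connected via $h=m$, force equality throughout the chain $\beta\delta\le(3\beta-\lambda)T\le\frac12\beta\lambda(3\beta-\lambda)$, read off the pointwise equality cases of \eqref{eq:T-upper} and \eqref{eq:pointwise}, and run the converse backwards. The one step the paper states explicitly is that $3\beta>\lambda$, because $(3\beta-\lambda)T=\beta\delta>0$; this is what allows dividing by $3\beta-\lambda$ to get $T=\frac12\beta\lambda$, and your remark that $\beta,\delta>0$ covers it but is worth spelling out.
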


\begin{proof}
Let $H$ be a component of $G$ with $\lambda(H)=\lambda$, and
put $h=e(H)$. Since $h\le m<\lambda^2=\lambda(H)^2$, the
component $H$ is not complete bipartite, so the proof of
Theorem \ref{thm:edge-spectral-intro} gives 
$\bk(G)\ge\bk(H)\ge\lambda-\frac{2h}{3\lambda}
\ge\lambda-\frac{2m}{3\lambda}$, so equality forces $h=m$.
Then every other component is an isolated vertex, and $G=H$
is connected. Since connectivity is also one of the
conditions on the right-hand side, we may assume that $G$ is
connected. 
Put $\delta=\lambda^2-m>0$. A connected complete bipartite
graph satisfies $\lambda^2=m$, so $G$ is not complete
bipartite, and Lemma \ref{lem:triangle} gives $T>0$ and
$\beta\ge1$. The proof of Theorem
\ref{thm:edge-spectral-intro} yields the chain
\begin{equation}\label{eq:equality-chain}
\beta\delta\ \le\ (3\beta-\lambda)\,T\ \le\
\frac{\beta\lambda(3\beta-\lambda)}{2}.
\end{equation}
Its first inequality is \eqref{eq:rhoT} after the substitution
$S=T-\delta$, and its second inequality is \eqref{eq:T-upper}
multiplied by $3\beta-\lambda\ge0$. Since $\beta>0$, the
equality $\beta=\lambda-\frac{2m}{3\lambda}$ is equivalent to
$2\delta=\lambda(3\beta-\lambda)$, that is, to equality
throughout \eqref{eq:equality-chain}.

Suppose that equality holds throughout
\eqref{eq:equality-chain}. If $3\beta=\lambda$, then
\eqref{eq:equality-chain} gives $\beta\delta\le0$, which is
impossible.  Hence $3\beta>\lambda$, and equality in the second inequality means $T=\frac{\beta\lambda}{2}$, that is, $\sum_{v}x_v\bigl(\beta d(v)-2t_v\bigr)=0$. Each summand is nonnegative, because $2t_v=\sum_{u\in N(v)}t_{uv}\le\beta d(v)$ and $x_v>0$. Hence $2t_v=\beta d(v)$ for every $v$, and so $t_{uv}=\beta$ for every edge $uv$.  Equality in the first inequality is equality in
\eqref{eq:rhoT}, hence in \eqref{eq:local} for every vertex
$v$, because every $x_v$ is positive. In the proof of Lemma
\ref{lem:charging}, \eqref{eq:local} was obtained by summing
\eqref{eq:pointwise} over all triangles and then applying
$\sum_K a_K\le\beta t_v$ and $\sum_K c_K\le\beta s_v$. Hence
\eqref{eq:pointwise} is an equality for every triangle $K$.
For $v\notin V(K)$, the two sides of \eqref{eq:pointwise}
agree only when $k=|V(K)\cap N(v)|\in\{1,2\}$. Thus no vertex
is adjacent to all three vertices of a triangle, so $G$ is
$K_4$-free; and no triangle avoids $N[v]$, so $G-N[v]$ is
triangle-free.

Conversely, suppose that $G$ is connected and  $K_4$-free, that every edge
lies in exactly $\beta$ triangles, and that $G-N[v]$ is
triangle-free for every $v$. The second condition gives
$2t_v=\sum_{u\in N(v)}t_{uv}=\beta d(v)$ for every $v$, so
$T=\frac{1}{2} \beta\lambda$; it also gives
$\sum_K a_K=\beta t_v$ and $\sum_K c_K=\beta s_v$. The first
and third conditions give $k\in\{1,2\}$ for every triangle $K$
and every $v\notin V(K)$, so \eqref{eq:pointwise} is an
equality in every case. Hence \eqref{eq:local} and therefore
\eqref{eq:rhoT} are equalities; that is,
$(3\beta-\lambda)T=\beta\delta$. Together with
$T=\frac{1}{2} \beta\lambda$, this gives
$2\delta=\lambda(3\beta-\lambda)$, which is the desired
equality.
\end{proof}

We now prove Theorem \ref{thm:two-lambda}. 

\begin{proof}[{\bf Proof of Theorem \ref{thm:two-lambda}}]
Choose a component $H$ of $G$ with $\lambda(H)=\lambda$,
and put $h=e(H)$ and $b=\bk(H)$; then $h\le m$ and
$b\le\bk(G)$. Apply the notation of Subsection \ref{sec:three-ineq} to $H$: let $\bm{x}$ be a Perron
vector of $H$ with $\|\bm{x}\|_1=1$, so $x_v>0$ for
every $v\in V(H)$, and let $t_v$, $s_v$, $T$, $S$ be the
associated quantities. Since every $s_v$ is
nonnegative, so is $S$, and \eqref{eq:TS} gives
\begin{equation}\label{eq:two-lambda-lower}
T=S+\lambda^{2}-h\ \ge\ \lambda^{2}-h .
\end{equation}
Combining this with $T\le b\lambda/2$ from
\eqref{eq:T-upper} yields
$\lambda^{2}-h\le b\lambda/2$, that is,
\[
\bk(G)\ \ge\ b\ \ge\ 2\lambda-\frac{2h}{\lambda}
\ \ge\ 2\lambda-\frac{2m}{\lambda},
\]
the last step because $h\le m$. This proves
\eqref{eq:two-lambda}.

Suppose now that equality holds in
\eqref{eq:two-lambda}. Then $h=m$, so every component of
$G$ other than $H$ is a single vertex, and both
\eqref{eq:two-lambda-lower} and \eqref{eq:T-upper} are
equalities.

Equality in \eqref{eq:two-lambda-lower} means $S=0$,
hence $s_v=0$ for every $v\in V(H)$. Thus for every
vertex $v$ the set $V(H)\setminus N_H[v]$ is
independent, that is, $u\not\sim v$ and $w\not\sim v$
imply $u\not\sim w$ for all distinct $u,w$. Therefore
the relation ``$u=w$ or $uw\notin E(H)$'' is an
equivalence relation on $V(H)$; its classes are
independent sets, and any two distinct classes are
completely joined. Hence $H$ is a complete multipartite
graph, say with parts $V_1,\dots,V_r$ of sizes
$n_1,\dots,n_r$, where $r\ge2$ and $n=|V(H)|$.

Equality in \eqref{eq:T-upper} means
$\sum_{v\in V(H)}x_v\bigl(b\,d(v)-2t_v\bigr)=0$. Each
summand is nonnegative because
$2t_v=\sum_{u\in N(v)}t_{uv}\le b\,d(v)$, and $x_v>0$,
so $2t_v=b\,d(v)$ for every $v$; consequently
$t_{uv}=b$ for every edge $uv$ of $H$. For $u\in V_i$
and $v\in V_j$ with $i\ne j$ we have
$t_{uv}=n-n_i-n_j$, so $n_i+n_j$ takes the same value
for all $i\ne j$. If $r=2$ this is no restriction, and
$H$ is complete bipartite; if $r\ge3$, then comparing
the pairs $\{i,j\}$ and $\{i,k\}$ gives $n_j=n_k$ for
all $j\ne k$, so all parts have the same size.

Conversely, let $G$ be complete multipartite with parts of sizes $n_1,\dots,n_r$, where either $r=2$ or all parts have the same size $k$. Then $s_v=0$ for every $v$, so $S=0$ and $T=\lambda^2-m$ by \eqref{eq:TS}. If $r=2$, then $G$ is triangle-free and $T=0=\bk(G)\lambda/2$; if all parts have size $k$, then every edge of $G$ has codegree $n-2k=\bk(G)$, so $2t_v=\bk(G)\,d(v)$ for every
$v$ and again $T=\bk(G)\lambda/2$. In both cases
$\lambda^2-m=\bk(G)\lambda/2$, which is equality in \eqref{eq:two-lambda}. 
\end{proof}

\subsection{Proof of Theorem \ref{thm:BN-refined}}

We show that the weighted identities of Subsection \ref{sec:three-ineq}  have a further application. 
Recall that Bollob\'as and Nikiforov \cite{BN2007} proved that every $m$-edge graph $G$ contains  
$t(G)\ge\frac{1}{3}\lambda(\lambda^{2}-m)$ triangles, and Ning
and Zhai \cite{NingZhai2023} showed that equality
holds if and only if $G$ is a complete bipartite graph. 
Theorem \ref{thm:BN-refined} sharpens the bound
so that complete graphs become extremal as well.

\begin{proof}[{\bf Proof of Theorem \ref{thm:BN-refined}}]
If $\lambda^{2}<m$, then the right-hand side of
\eqref{eq:BN-refined} is negative and the inequality is
trivial. So assume $\lambda^{2}\ge m$. Choose a connected
component $H$
of $G$ with $\lambda(H)=\lambda$, and apply the
notation of Subsection \ref{sec:three-ineq} to $H$: let
$\bm{x}$ be a Perron vector of $H$ with 
$\|\bm{x}\|_{1}=1$, and let $T$ and $S$ be the
associated quantities. Since $S\ge 0$ and
$e(H)\le m$, the identity \eqref{eq:TS} gives
\begin{equation}\label{eq:T-lower-supersat}
T\ \ge\ \lambda^{2}-e(H)\ \ge\ \lambda^{2}-m.
\end{equation}
Grouping the sum $T=\sum_{v}x_{v}t_{v}$ by triangles,
we get $T=\sum_{K\in\T(H)}\sum_{v\in V(K)}x_{v}$, and
hence
\begin{equation}\label{eq:T-xmax}
T\ \le\ 3\,x_{\max}\,t(H),
\qquad\text{where }
x_{\max}:=\max_{v\in V(H)}x_{v}.
\end{equation}
Finally, let $v^{*}$ be a vertex with
$x_{v^{*}}=x_{\max}$. The eigenvalue equation at
$v^{*}$ and $\|\bm{x}\|_{1}=1$ yield
\[
\lambda\,x_{\max}
=\sum_{u\in N(v^{*})}x_{u}
\ \le\ \sum_{u\neq v^{*}}x_{u}
=1-x_{\max},
\]
so $x_{\max}\le\frac{1}{1+\lambda}$. Using \eqref{eq:T-lower-supersat} and
\eqref{eq:T-xmax}, we get 
$\frac{3\,t(H)}{1+\lambda}\ge\lambda^{2}-m$, and
\eqref{eq:BN-refined} follows from $t(G)\ge t(H)$.

Suppose now that equality holds in
\eqref{eq:BN-refined}. Assume first that
$t(G)>0$, so $\lambda^{2}>m$. Then $t(G)=t(H)$,
$e(H)=m$, and \eqref{eq:T-lower-supersat},
\eqref{eq:T-xmax} and
$x_{\max}\le\frac{1}{1+\lambda}$ are all equalities.
From $e(H)=m$, all components of $G$ other than $H$
are isolated vertices. As in the proof of Theorem
\ref{thm:two-lambda}, the equality $S=0$ implies that
$H$ is a complete multipartite graph. Equality in
\eqref{eq:T-xmax} means that $x_{v}=x_{\max}$ for
every vertex $v$ lying in a triangle of $H$; since $H$
is complete multipartite and contains a triangle, it
has at least three parts, so every vertex of $H$ lies
in a triangle. Hence $\bm{x}$ is constant, $H$ is
regular, and all parts of $H$ have the same size.
Equality in $x_{\max}\le\frac{1}{1+\lambda}$ means
that $x_{u}=0$ for every $u\notin N[v^{*}]$; as
$\bm{x}$ is positive on $V(H)$, the vertex $v^{*}$
dominates $H$, so its part is a singleton. Therefore
all parts are singletons and $H$ is a complete graph.
Assume next that $t(G)=0$. Then $\lambda^{2}=m$, and
$H$ is a triangle-free connected graph with
$e(H)\le\lambda^{2}$, so Lemma \ref{lem:triangle}
shows that $H$ is complete bipartite; now
$\lambda^{2}=e(H)$ forces $e(H)=m$, and all remaining
components are isolated vertices.

Conversely, if $G$ is a complete graph $K_{a}$, then $t(G)=\binom{a}{3}$, $\lambda=a-1$ and
$m=\binom{a}{2}$, so both sides of
\eqref{eq:BN-refined} equal $\frac{1}{6}a(a-1)(a-2)$.
If $G$ is a complete bipartite graph, then $t(G)=0$ and
$\lambda^{2}=m$.
\end{proof}

\begin{remark}
Theorem \ref{thm:BN-refined} strengthens the
Bollob\'as--Nikiforov bound whenever $\lambda^{2}>m$,
and it is exact on every complete graph. The bound
$t(G)\ge m(\lambda-\sqrt m\,)$ of Chen, Li and Tang
\cite{ChenLiTang2026} serves a different regime: it is
designed for the Nosal threshold $\lambda\approx\sqrt m$, whereas \eqref{eq:BN-refined} governs the dense range $\lambda\ge 1.31\sqrt m$, where the cubic term
$\lambda(\lambda^{2}-m)$ dominates. 
\end{remark}

\subsection{Proofs of Theorems \ref{thm-confirm-Zhai-Lin} and \ref{thm:fixed-order-intro}}

In this subsection, we prove Theorems \ref{thm-confirm-Zhai-Lin} and \ref{thm:fixed-order-intro} by applying 
Theorems \ref{thm:edge-spectral-intro} and \ref{thm:two-lambda}. 

\begin{proof}[{\bf Proof of Theorem \ref{thm-confirm-Zhai-Lin}}]
We split the proof according to the comparison
between $\lambda$ and $\sqrt m$. 
Suppose first that $\lambda\ge\sqrt m$. We claim that $G$
is not the disjoint union of a complete bipartite graph
and isolated vertices. Indeed, suppose that
$G=K_{a,b}\cup rK_1$. Then $\lambda^2=ab$, and
$\lambda\ge\lambda(T_{n,2})$ gives
\[
\Bigl\lfloor\frac{n^2}{4}\Bigr\rfloor\le ab\le
\Bigl\lfloor\frac{(a+b)^2}{4}\Bigr\rfloor .
\]
If $a+b<n$, then
$\lfloor(a+b)^2/4\rfloor\le\lfloor(n-1)^2/4\rfloor
<\lfloor n^2/4\rfloor$, a contradiction. Hence $a+b=n$,
$r=0$ and $ab=\lfloor n^2/4\rfloor$, so
$\{a,b\}=\{\lceil n/2\rceil,\lfloor n/2\rfloor\}$, that is,
$G=T_{n,2}$, contrary to the hypothesis. Let $G'$ be the
graph obtained from $G$ by deleting its isolated vertices.
Then $G'$ has $m\ge1$ edges, $\lambda(G')=\lambda$ and
$\bk(G')=\bk(G)$, and $G'$ is not complete bipartite by the
claim just proved. 
Thus, Theorem \ref{thm:edge-spectral-intro}
applies to $G'$ and yields
\[
\bk(G)=\bk(G')\ge\lambda-\frac{2m}{3\lambda}
\ge\frac{\lambda}{3}.
\]
In the second case $\lambda<\sqrt m$, the hypothesis
$\lambda\ge\lambda(T_{n,2})=\sqrt{\lfloor n^2/4\rfloor}$
gives $m>\lfloor n^2/4\rfloor$, and the Edwards bound
\eqref{eq-Edw} gives $\bk(G)>n/6$. On the other hand,
$\lambda^2<m\le n\lambda/2$ gives $\lambda<n/2$, so
$\lambda/3<n/6<\bk(G)$. In both cases
$\bk(G)\ge\frac13\lambda(G)$. 

Suppose now that $\lambda(G)>\lambda(T_{n,2})$; then
$G\ne T_{n,2}$, so the first part applies. Put
$q:=\lfloor n/6\rfloor$. Since $3q$ is an integer with
$3q\le n/2$, we have $9q^2\le\lfloor n^2/4\rfloor$, that
is, $\lambda(T_{n,2})=\sqrt{\lfloor n^2/4\rfloor}\ge 3q$.
Therefore
\[
\bk(G)\ \ge\ \tfrac13\lambda(G)
\ >\ \tfrac13\lambda(T_{n,2})\ \ge\ q .
\]
Since $\bk(G)$ is an integer, $\bk(G)\ge q+1
=\lfloor n/6\rfloor+1$, as needed.
\end{proof}

We next derive Theorem \ref{thm:fixed-order-intro} from Theorems \ref{thm:edge-spectral-intro} and \ref{thm:two-lambda}. 

\begin{proof}[{\bf Proof of Theorem \ref{thm:fixed-order-intro}}]
First, suppose that $\lambda \ge \sqrt{m}$. As in
the proof of Theorem \ref{thm-confirm-Zhai-Lin}, the graph obtained
from $G$ by deleting its isolated vertices is not
complete bipartite, and it has the same $m$, $\lambda$
and booksize as $G$. Theorem
\ref{thm:edge-spectral-intro} applied to it gives
\[ \bk (G) \ge 
\lambda-\frac{2m}{3\lambda}\ge \lambda-\frac n3,
\] 
where the second inequality holds since $\lambda\ge2m/n$. 
In the second case $\lambda< \sqrt{m}$, the hypothesis $\lambda \ge \lambda (T_{n,2})$ gives 
$m> \lfloor n^2/4 \rfloor$, and the 
Edwards bound gives $\bk (G)>n/6$. On the other hand, the inequalities $m>\lambda^2$ and $m\le n\lambda/2$ imply $\lambda<n/2$. It follows that $\frac n6 > \lambda-\frac n3$, as needed.

Finally, we apply Theorem \ref{thm:two-lambda} to $G$ with its
isolated vertices deleted, which gives
$\bk(G)\ge 2\lambda-\frac{2m}{\lambda}$. Since
$\lambda\ge\frac{2m}{n}$, we have $\frac{2m}{\lambda}\le n$,
and therefore $\bk(G)\ge 2\lambda-n$. 
\end{proof}

 The second bound has a
direct proof from a degree bound on the spectral radius.

\begin{proof}[Alternative proof]
Let $\beta=\bk(G)$. For any edge $uv\in E(G)$, we have $|N(u)\cap N(v)|\leq \beta$.
Since $|N(u)\cup N(v)| = d(u) + d(v) - |N(u)\cap N(v)|\leq n$, we obtain
$d(u)+d(v)\leq n+\beta$.

A well-known bound on $\lambda(G)$ (see
\cite[Lemma 2.1]{Berman-Zhang2001}) states that
$$\lambda(G)\le\max_{uv\in E(G)}\sqrt{d(u)d(v)}.$$
Combined with the AM--GM inequality
$\sqrt{d(u)d(v)}\le\frac12\bigl(d(u)+d(v)\bigr)$, this gives
\[
\lambda(G)\le\frac12\max_{uv\in E(G)}\bigl(d(u)+d(v)\bigr)
\le\frac{n+\beta}{2}.
\]
Therefore, we get $\beta\ge 2\lambda(G)-n$, as desired. 
\end{proof}

The equality cases of 
Theorems \ref{thm-confirm-Zhai-Lin} and \ref{thm:fixed-order-intro} will be characterized in the next section.

\section{Extremal graphs}\label{sec:sharpness}

In this section, we determine exactly which graphs attain
equality in Theorem \ref{thm-confirm-Zhai-Lin} (Section \ref{sec:tight}). These graphs satisfy $\lambda(G)=\lambda(T_{n,2})$, so the
strict inequality in the second statement of Theorem
\ref{thm-confirm-Zhai-Lin} cannot be relaxed. 
We then present four infinite families of such graphs and show that their number grows at least linearly with $n$ (Section \ref{sec:families}). 
By perturbing these graphs, we show
that the bound $\lfloor \frac{n}{6} \rfloor+1$ in Theorem
\ref{thm-confirm-Zhai-Lin} is attained (Section \ref{sec:unbalanced}), and we list some tight graphs (Section \ref{sec:small}). 
Along the way, we show that the bounds of Theorems \ref{thm:fixed-order-intro}, \ref{thm:edge-spectral-intro} and \ref{thm:two-lambda} are best possible as well.

\subsection{Tight graphs}\label{sec:tight}

Let $t_{uv}=|N(u)\cap N(v)|$ denote the
number of triangles containing the edge $uv$.

\begin{definition}\label{def:tight}
A graph $G$ on $n$ vertices is \emph{tight} if $G$ is
$\frac n2$-regular, $G$ contains a triangle, and every edge of
$G$ lies in either $0$ or exactly $\frac n6$ triangles.
\end{definition}

A tight graph has $n\equiv 0\pmod 6$ and $\frac{n^2}{4}$ edges,
so $\lambda(G)=\frac n2=\lambda(T_{n,2})$ and $G\ne T_{n,2}$.

\begin{lemma}\label{lem:count}
Let $G$ be an $\frac n2$-regular graph on $n$ vertices and let
$uvw$ be a triangle of $G$. Then
\begin{equation}\label{eq:count}
t_{uv}+t_{uw}+t_{vw}\ \ge\ \frac n2 ,
\end{equation}
with equality if and only if every vertex outside the triangle
is adjacent to exactly one or exactly two of $u,v,w$.
Thus, if $G$ contains a triangle, then
$\bk(G)\ge\frac n6$, with equality if and only if $G$ is
tight.
\end{lemma}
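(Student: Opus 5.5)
The plan is a direct double count around the fixed triangle $uvw$. For a vertex $z\notin\{u,v,w\}$, let $k_z=|N(z)\cap\{u,v,w\}|\in\{0,1,2,3\}$. The codegree $t_{uv}$ counts $w$ together with all outside vertices adjacent to both $u$ and $v$, and similarly for the other two edges. Hence
\[
t_{uv}+t_{uw}+t_{vw}=3+\sum_{z\notin\{u,v,w\}}\binom{k_z}{2}.
\]
On the other hand, regularity gives $d(u)+d(v)+d(w)=\frac{3n}{2}$. Each of $u,v,w$ has two neighbours inside the triangle, so
\[
\sum_{z}k_z=\frac{3n}{2}-6 .
\]

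Next I compare $\binom{k}{2}$ with a linear function of $k$ that is tight exactly at $k=1,2$. The function $\binom{k}{2}-(k-1)$ takes the values $1,0,0,1$ for $k=0,1,2,3$, so it is nonnegative and vanishes exactly when $k\in\{1,2\}$. Summing over the $n-3$ outside vertices gives
\[
\sum_z\binom{k_z}{2}\ \ge\ \sum_z k_z-(n-3)=\frac{3n}{2}-6-n+3=\frac n2-3 .
\]
Therefore $t_{uv}+t_{uw}+t_{vw}\ge\frac n2$. Equality holds if and only if $k_z\in\{1,2\}$ for every outside vertex $z$, which is exactly the claimed equality condition.

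For the consequence, I take any triangle. One of its three edges has codegree at least the average of the three, so $\bk(G)\ge\frac n6$.

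Suppose now that $\bk(G)=\frac n6$. Then for every triangle all three edges have codegree exactly $\frac n6$, because their sum is at least $\frac n2$ and each is at most $\frac n6$. Every edge lying in some triangle belongs to a triangle, so it has codegree $\frac n6$, and every other edge has codegree $0$. Since $G$ is $\frac n2$-regular and contains a triangle, $G$ is tight.

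Conversely, if $G$ is tight, every codegree is $0$ or $\frac n6$, and a triangle exists. So $\bk(G)=\frac n6$.

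There is no real obstacle here. The only point needing care is choosing the linear comparison $\binom{k}{2}\ge k-1$, so that the equality cases come out as $k\in\{1,2\}$.
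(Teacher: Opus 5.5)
Your proof is correct and is essentially the paper's argument. Both classify each vertex outside the triangle by the number $k$ of triangle vertices it sees, get equality exactly for $k\in\{1,2\}$, and derive the booksize bound and the tightness characterization in the same way. The only difference is cosmetic: the paper uses regularity edge by edge, through the set $S_{xy}$ of common neighbours or common non-neighbours with $|S_{xy}|=2t_{xy}$, so that each outside vertex contributes $\binom{k}{2}+\binom{3-k}{2}\ge 1$. You instead use the degree sum together with $\binom{k}{2}\ge k-1$, and since $\binom{k}{2}+\binom{3-k}{2}-1=2\bigl(\binom{k}{2}-(k-1)\bigr)$, the two counts amount to the same inequality.
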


\begin{proof}
For an edge $xy$ let $S_{xy}$ be the set of vertices adjacent to
both $x$ and $y$ or to neither of them. Since
$d(x)=d(y)=\frac n2$, we have $|N(x)\cup N(y)|=n-t_{xy}$, and
therefore $|S_{xy}|=2t_{xy}$. Each of $u,v,w$ lies in exactly one
of the sets $S_{uv},S_{uw},S_{vw}$. A vertex $z\notin\{u,v,w\}$
adjacent to exactly $j$ of $u,v,w$ lies in
$\binom j2+\binom{3-j}2$ of them; this number is $3$ for
$j\in\{0,3\}$ and $1$ for $j\in\{1,2\}$. Hence
$|S_{uv}|+|S_{uw}|+|S_{vw}|\ge n$, with equality exactly when
$j\in\{1,2\}$ for every $z$. This proves \eqref{eq:count}.

If $G$ contains a triangle, then \eqref{eq:count} gives
$3\bk(G)\ge\frac n2$. If $\bk(G)=\frac n6$, then \eqref{eq:count}
forces $t_{e}=\frac n6$ for every edge $e$ of every triangle, so
$G$ is tight. Conversely, in a tight graph every edge lies in at
most $\frac n6$ triangles, and \eqref{eq:count} applied to any
triangle shows that some edge lies in exactly $\frac n6$ of them.
\end{proof}

We now characterize the equality case of the bound in  Theorem \ref{thm-confirm-Zhai-Lin}.

\begin{proposition}\label{prop:equality}
Let $G\ne T_{n,2}$ be an $n$-vertex graph with
$\lambda(G)\ge\lambda(T_{n,2})$. Then $\bk(G)=\frac13\lambda(G)$
if and only if $G$ is tight. In particular, equality in Theorem
\ref{thm-confirm-Zhai-Lin} forces $\lambda(G)=\lambda(T_{n,2})$.
\end{proposition}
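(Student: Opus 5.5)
The easy direction comes first. If $G$ is tight, then $G$ is $\frac n2$-regular, so $\lambda(G)=\frac n2=\lambda(T_{n,2})$ and $G\neq T_{n,2}$. Since $G$ contains a triangle, Lemma \ref{lem:count} gives $\bk(G)=\frac n6=\frac13\lambda(G)$.

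For the converse, suppose $\bk(G)=\frac13\lambda(G)$ and go back through the proof of Theorem \ref{thm-confirm-Zhai-Lin}, case by case.

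In the case $\lambda<\sqrt m$, the proof gave the strict chain $\lambda/3<n/6<\bk(G)$, so equality is impossible there. Hence $\lambda^2\ge m$, and after deleting isolated vertices we obtain $G'$, which is not complete bipartite. The chain
\[
\bk(G)\ \ge\ \lambda-\frac{2m}{3\lambda}\ \ge\ \frac{\lambda}{3}
\]
must then be tight throughout. Equality in the second step means $\lambda^2=m$.

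Next I would use $\lambda\ge 2m/n$. Together with $\lambda^2=m$ this gives $\lambda\le n/2$. The hypothesis gives $\lambda\ge\lambda(T_{n,2})=\sqrt{\lfloor n^2/4\rfloor}$, so $m=\lambda^2\ge\lfloor n^2/4\rfloor$.

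If $n$ is odd, I would argue as follows. Since $m$ is an integer and $\frac{n^2-1}{4}\le m\le \frac{n^2}{4}$, we get $m=\frac{n^2-1}{4}$ and $\lambda=\sqrt m$. Then $\frac{2m}{n}=\frac{n^2-1}{2n}\le\sqrt{\frac{n^2-1}{4}}$, with slack. So this needs a sharper argument: I plan to use that $\bk(G)$ is an integer equal to $\lambda/3=\frac13\sqrt{(n^2-1)/4}$, which is irrational unless $n^2-1$ is a perfect square, and that fails for $n\ge 2$. So $n$ odd is excluded; $n=1$ is trivial since there are no edges. In general, integrality of $\bk$ gives $\lambda=3\bk\in\mathbb Z$.

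If $n$ is even, then $\lambda\ge n/2$ and $\lambda\le n/2$, so $\lambda=\frac n2=\frac{2m}{n}$. Equality in $\lambda\ge 2m/n$ holds only when $G$ is regular (this is the standard equality case, via Rayleigh with the all-ones vector). Thus $G$ is $\frac n2$-regular with no isolated vertices, and $\bk(G)=\frac n6$.

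Finally, $G$ contains a triangle, by Lemma \ref{lem:triangle} applied to a component, since $\lambda^2=m$ and $G$ is not complete bipartite. Lemma \ref{lem:count} then says that $\bk(G)=\frac n6$ forces $G$ to be tight. In all cases the equality forces $\lambda(G)=\lambda(T_{n,2})$, which gives the final sentence of the statement.

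The main obstacle is the odd-$n$ and non-integral bookkeeping. I expect integrality of $\bk$, giving $\lambda\in\mathbb Z$, to settle it: $\frac n2<\lambda=\sqrt m\le\frac{2m}{n}$ is impossible when $\lambda\le n/2$. More directly, $\lambda\ge 2m/n=2\lambda^2/n$ gives $\lambda\le n/2$, while for odd $n$ the integer $\lambda\ge\sqrt{(n^2-1)/4}>\frac{n-1}{2}$ forces $\lambda\ge\frac{n+1}{2}$, which is a contradiction.
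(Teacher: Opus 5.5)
Your proof is correct and follows the paper's argument. You force $m=\lambda^2$ from equality in $\lambda-\frac{2m}{3\lambda}\ge\frac13\lambda$, deduce $\lambda\le\frac n2$ and $m=\lfloor n^2/4\rfloor$, exclude odd $n$ by integrality of $\bk(G)$, obtain $\frac n2$-regularity from $\lambda=\frac{2m}{n}$, and conclude with Lemma \ref{lem:count}. The differences are only cosmetic: your rounding step $\lambda=3\bk(G)\ge\frac{n+1}{2}$ replaces the paper's $(n-6k)(n+6k)=1$, and you get the triangle from Lemma \ref{lem:triangle} instead of Mantel uniqueness, although $\bk(G)=\frac n6>0$ already gives it.
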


\begin{proof}
A tight graph satisfies $\lambda(G)=\frac n2=\lambda(T_{n,2})$,
is not $T_{n,2}$, and has $\bk(G)=\frac n6$ by Lemma
\ref{lem:count}.
Conversely, let $\bk(G)=\frac13\lambda$ with $\lambda=\lambda(G)$.
In the proof of Theorem \ref{thm-confirm-Zhai-Lin}, the case $\lambda<\sqrt m$
gives $\lambda<\frac n2$ and $\bk(G)>\frac n6>\frac13\lambda$,
and the case $\lambda\ge\sqrt m$ gives
$\bk(G)\ge\lambda-\frac{2m}{3\lambda}\ge\frac13\lambda$, where
the second step is an equality only if $m=\lambda^2$. Hence, we obtain 
$m=\lambda^2$. Now $\lambda\ge\frac{2m}{n}=\frac{2\lambda^2}{n}$
yields $\lambda\le\frac n2$ and $m=\lambda^2\le\frac{n^2}{4}$,
and $\lambda^2\ge\lfloor n^2/4\rfloor$ yields
$m=\lfloor n^2/4\rfloor$.
Since $G\ne T_{n,2}$ and $T_{n,2}$ is the only triangle-free
graph with $n$ vertices and $\lfloor n^2/4\rfloor$ edges, $G$
has a triangle, so $\bk(G)\ge 1$. If $n$ were odd, then
$\bk(G)=\frac13\lambda=\frac16\sqrt{n^2-1}$ would be a positive
integer $k$, so $(n-6k)(n+6k)=1$, which is impossible. Thus $n$
is even, $m=\frac{n^2}{4}$ and $\lambda=\frac n2=\frac{2m}{n}$,
so $G$ is $\frac n2$-regular and
$\lambda(G)=\frac n2=\lambda(T_{n,2})$. By Lemma
\ref{lem:count}, we see that $\bk(G)=\frac n6$ forces $G$ to be tight.
\end{proof}

\begin{lemma}\label{lem:structure}
Let $G$ be a tight graph on $n=6\beta$ vertices and let $uvw$ be
a triangle of $G$.
\begin{enumerate}
\item[(a)] Every vertex outside the triangle is adjacent to
exactly one or exactly two of $u,v,w$. In particular, $G$ is
$K_4$-free.
\item[(b)] The sets $W=N(v)\cap N(w)$ and
$R=V(G)\setminus(N(v)\cup N(w))$ both have exactly $\beta$
vertices, $u\in W$, and every vertex of $R$ is adjacent to every
vertex of $W$.
\end{enumerate}
\end{lemma}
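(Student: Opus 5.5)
Part (a) follows from Lemma~\ref{lem:count}. In a tight graph every edge of the triangle $uvw$ lies in exactly $\beta=\frac n6$ triangles, since each lies in at least one and hence in $0$ or $\frac n6$. So $t_{uv}+t_{uw}+t_{vw}=3\beta=\frac n2$, which is the equality case of \eqref{eq:count}. That equality case says every vertex outside $\{u,v,w\}$ is adjacent to exactly one or two of $u,v,w$. In particular, no vertex is adjacent to all three vertices of a triangle, so $G$ is $K_4$-free.

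For (b), I would first settle the cardinalities. We have $|W|=t_{vw}=\beta$, and $u\in W$ because $uvw$ is a triangle. Since $d(v)=d(w)=\frac n2$, inclusion--exclusion gives $|N(v)\cup N(w)|=n-\beta$, hence $|R|=\beta$. This is the set $S_{vw}$ of the proof of Lemma~\ref{lem:count}, split as $W\cup R$ with $|S_{vw}|=2t_{vw}$.

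The main step is showing that $R$ is completely joined to $W$. Fix $x\in W$; then $xvw$ is a triangle. By (a) applied to $xvw$, every vertex $z\notin\{x,v,w\}$ is adjacent to one or two of $x,v,w$. A vertex $z\in R$ is adjacent to neither $v$ nor $w$, so it must be adjacent to $x$. Since $x\in W$ was arbitrary, every vertex of $R$ is adjacent to every vertex of $W$. (We have $z\neq x$ automatically, as $x\in N(v)$ and $z\notin N(v)$.)

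I expect no real obstacle here. The only point needing care is that (a) must be applied to every triangle $xvw$ with $x\in W$, not just to $uvw$, and this is legitimate because tightness is a global property.
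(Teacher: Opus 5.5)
Your proof is correct and follows the paper's own argument essentially step for step. Equality in \eqref{eq:count} for an arbitrary triangle gives (a) and $K_4$-freeness. In (b), $|W|=t_{vw}=\beta$, and inclusion--exclusion with $d(v)=d(w)=\frac n2$ gives $|R|=\beta$. The complete join between $R$ and $W$ follows by applying (a) to each triangle $xvw$ with $x\in W$; note that $z\notin\{v,w\}$ holds for $z\in R$ just as $z\neq x$ does, since $v\in N(w)$ and $w\in N(v)$.
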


\begin{proof}
Every edge of the triangle lies in a triangle, hence in exactly
$\beta$ triangles, so equality holds in \eqref{eq:count}. This
gives (a) for the triangle $uvw$. Since $uvw$ was arbitrary, no
vertex is joined to all three vertices of any triangle, and $G$
is $K_4$-free. For (b), we have $|W|=t_{vw}=\beta$ and
$|R|=n-|N(v)\cup N(w)|=\beta$. Let $r\in R$ and $u'\in W$. Then
$u'vw$ is a triangle and $r$ is adjacent to neither $v$ nor $w$,
so $r$ is adjacent to $u'$ by (a).
\end{proof}

\begin{lemma}\label{lem:parity}
Let $G$ be a tight graph on $n=6\beta$ vertices in which every
edge lies in a triangle. Then $\beta$ is even, that is,
$n\equiv 0\pmod{12}$.
\end{lemma}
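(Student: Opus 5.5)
The plan is a parity count of the triangles through a single vertex. Since every edge of $G$ lies in a triangle and $G$ is tight, every edge lies in \emph{exactly} $\beta=\frac n6$ triangles, so $t_{uv}=\beta$ for every edge $uv$. Also $G$ is $\frac n2$-regular, so every vertex $v$ has $d(v)=\frac n2=3\beta$.

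Fix any vertex $v$ and use the identity from the proof of Lemma \ref{lem:T-upper}. Summing the codegrees over the edges at $v$ counts each triangle through $v$ exactly twice, so
\[
2t_v=\sum_{u\in N(v)}t_{uv}=\beta\, d(v)=3\beta^2 .
\]
Here $t_v=e(G[N(v)])$ is a nonnegative integer, so $3\beta^2$ must be even. Since $3$ is odd, $\beta^2$ is even, and therefore $\beta$ is even. Hence $n=6\beta\equiv 0\pmod{12}$.

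The only point to check is that the hypothesis is used where it is needed. Tightness alone lets some edges lie in $0$ triangles, and then $2t_v$ would be $\beta$ times the number of edges at $v$ that lie in triangles, which is not necessarily $3\beta$. The assumption that every edge lies in a triangle rules this out, forcing $t_{uv}=\beta$ for all $d(v)=3\beta$ edges at $v$. I expect no real obstacle beyond this.
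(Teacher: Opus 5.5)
Your argument is correct, and it takes a different and much shorter route than the paper. You observe that the hypotheses make $G[N(v)]$ a $\beta$-regular graph on $3\beta$ vertices, since the degree of $u$ in $G[N(v)]$ is exactly $t_{uv}=\beta$. The handshake lemma then forces $3\beta^2=2t_v$ to be even. This uses only regularity and the uniform codegree. In fact it shows more generally that a $d$-regular graph in which every edge has codegree exactly $c$ must have $cd$ even.

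The paper instead fixes a triangle $uvw$ and invokes Lemma~\ref{lem:structure} ($K_4$-freeness and the complete join between $R$ and $W$). It splits the remaining vertices into the classes $X_x$ and $Y_x$, counts the neighbors of a vertex $y\in Y_u$ to get $e(Y_u,Y_v)+e(Y_u,Y_w)=\beta^2$, and solves the resulting symmetric system to obtain $e(Y_v,Y_w)=\beta^2/2$. That route costs more, but it yields concrete structural information about a tight graph around a triangle, of the kind used to reduce the computer search in Subsection~\ref{sec:small}. For the parity statement alone, your local handshake count is the cleaner proof.
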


\begin{proof}
Fix a triangle $uvw$. By Lemma \ref{lem:structure}(a), the
vertices outside the triangle split into six sets: for
$x\in\{u,v,w\}$, let $Y_x$ be the set of vertices adjacent to $x$
only, and let $X_x$ be the set of vertices adjacent to the two vertices other than $x$. 
In the notation of Lemma
\ref{lem:structure}(b), applied to the edge $vw$, we have $Y_u=R$ and, since $G$ is $K_4$-free,  $X_u=W\setminus\{u\}$. Hence, we get 
$|Y_u|=\beta$, $|X_u|=\beta-1$, and $Y_u$ is completely joined to
$X_u\cup\{u\}$. Moreover, we have $N(u)=\{v,w\}\cup X_v\cup X_w\cup Y_u$.

Let $y\in Y_u$. The edge $uy$ lies in a triangle, so
$t_{uy}=\beta$. As $y$ is adjacent to neither $v$ nor $w$, the
$\beta$ common neighbors of $u$ and $y$ lie in
$X_v\cup X_w\cup Y_u$. Together with $u$ and the $\beta-1$
vertices of $X_u$, this accounts for $2\beta$ neighbors of $y$;
the remaining $\beta$ neighbors of $y$ lie in $Y_v\cup Y_w$.
Hence $e(Y_u,Y_v)+e(Y_u,Y_w)=\beta^{2}$, and the same holds
with $u$ replaced by $v$ or by $w$. Adding the three equations
gives
$2\bigl(e(Y_u,Y_v)+e(Y_u,Y_w)+e(Y_v,Y_w)\bigr)=3\beta^{2}$;
halving this and subtracting the equation for $u$ leaves
$e(Y_v,Y_w)=\beta^{2}/2$. Therefore $\beta^{2}$ is even, and
so is $\beta$.
\end{proof}

Consequently, for $n\equiv 6\pmod{12}$, 
Lemma \ref{lem:parity} implies that every tight graph of order $n$ has an
edge $uy$ that lies in no triangle. For such an edge
$N(u)\cap N(y)=\emptyset$, and since $d(u)+d(y)=n$, the ends of
the edge have complementary neighborhoods:
$N(y)=V(G)\setminus N(u)$.

Now, the graphs attaining equality in either term of
Theorem \ref{thm:fixed-order-intro} can be described.

\begin{proposition}\label{prop:equality-fixed}
Let $G\ne T_{n,2}$ be an $n$-vertex graph with
$\lambda:=\lambda(G)\ge\lambda(T_{n,2})$.
\begin{enumerate}
\item[\rm(a)] $\bk(G)=2\lambda-n$ if and only if $G=T_{n,r}$
for some $r\ge3$ with $r\mid n$.
\item[\rm(b)] $\bk(G)=\lambda-\frac n3$ if and only if $G$ is
$d$-regular for some $d\ge\frac n2$ and, in addition, either
$d=\frac n2$, $G$ has a triangle and every edge of $G$ lies in $0$ or $\frac{n}{6}$ triangles, 
or $d>\frac n2$ and $G$ is
$K_4$-free, every edge of $G$ lies in exactly $\bk(G)$
triangles, and $G-N[v]$ is triangle-free for every vertex $v$.
\end{enumerate}
\end{proposition}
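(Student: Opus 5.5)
For both parts I would trace the chain of inequalities in the proof of Theorem \ref{thm:fixed-order-intro} and force every step to be tight. First I would rule out the case $\lambda<\sqrt m$. There the Edwards bound \eqref{eq-Edw} gives $\bk(G)>\frac n6$, while $\lambda<\frac n2$. This yields $\lambda-\frac n3<\frac n6<\bk(G)$ and $2\lambda-n<0<\bk(G)$, so equality is impossible in either part. Hence we may assume $\lambda\ge\sqrt m$.

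For (a), write $G'$ for $G$ with its isolated vertices deleted. The chain is $\bk(G)\ge 2\lambda-\frac{2m}{\lambda}\ge 2\lambda-n$. Equality in the second step forces $\lambda=\frac{2m}{n}$, so $G$ is regular. Being regular, $G$ has no isolated vertices (since $m\ge1$), and therefore $G'=G$. Equality in the first step is equality in Theorem \ref{thm:two-lambda}. So $G$ is complete bipartite, or complete multipartite with $r\ge3$ equal parts. A regular complete bipartite graph is $K_{n/2,n/2}=T_{n,2}$, which is excluded. The remaining case is $T_{n,r}$ with $r\ge3$ and $r\mid n$. For the converse, $T_{n,r}$ has $\lambda=n-\frac nr>\frac n2$ and booksize $n-\frac{2n}{r}=2\lambda-n$.

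For (b), the chain is $\bk(G)\ge\lambda-\frac{2m}{3\lambda}\ge\lambda-\frac n3$. Equality again forces regularity, say $d$-regular with $d=\lambda\ge\lambda(T_{n,2})$. This gives $d\ge\frac n2$, since $d$ is an integer and $d\ge\sqrt{\lfloor n^2/4\rfloor}$. We also get equality in Theorem \ref{thm:edge-spectral-intro}, and I would split on the value of $d$.

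\emph{Case $d>\frac n2$.} Here $\lambda^2=d^2>\frac{nd}{2}=m$, so Proposition \ref{prop:equality-edge} applies and gives exactly the stated conditions. Connectivity is implied by those conditions, because regularity plus equality forces $h=m$.

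\emph{Case $d=\frac n2$.} Then $\bk(G)=\frac n2-\frac n3=\frac n6=\frac13\lambda$, so Proposition \ref{prop:equality} says $G$ is tight, which matches the stated description.

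For the converses in (b), a graph satisfying the $d=\frac n2$ description is tight, so Lemma \ref{lem:count} gives $\bk=\frac n6=\lambda-\frac n3$. In the $d>\frac n2$ description, Proposition \ref{prop:equality-edge} needs connectivity, which is not stated explicitly in the description. I would recover it as follows. Every vertex has degree greater than $\frac n2$, so any two vertices have a common neighbour, and $G$ is connected. Then Proposition \ref{prop:equality-edge} applies, because $\lambda^2=d^2>m$. It gives $\bk=\lambda-\frac{2m}{3\lambda}=d-\frac{n}{3}=\lambda-\frac n3$. The hypothesis $\lambda\ge\lambda(T_{n,2})$ holds automatically, and $G\ne T_{n,2}$ because $G$ contains a triangle.

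The main obstacle is the careful handling of the isolated-vertex reduction and the verification of connectivity. Regularity settles both, so I expect the argument to go through without further difficulty.
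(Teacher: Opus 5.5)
Your proposal is correct and follows essentially the same route as the paper: force equality in $\bk(G)\ge 2\lambda-\frac{2m}{\lambda}\ge 2\lambda-n$ and in $\bk(G)\ge\lambda-\frac{2m}{3\lambda}\ge\lambda-\frac n3$, deduce regularity from $\lambda=\frac{2m}{n}$, and read off the structure from the equality cases in Theorem~\ref{thm:two-lambda}, Proposition~\ref{prop:equality} and Proposition~\ref{prop:equality-edge}. The differences are only cosmetic: you also exclude $\lambda<\sqrt m$ in (a), which is unnecessary there since Theorem~\ref{thm:two-lambda} has no spectral hypothesis; you get $d\ge\frac n2$ from the integrality of $d$ rather than from $d^2\ge\frac{nd}{2}$; and you prove connectivity via common neighbours rather than via the sizes of components.
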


\begin{proof}
Throughout we use $\lambda\ge\frac{2m}{n}$, with equality if
and only if $G$ is regular. It gives $\frac{2m}{\lambda}\le n$
and $\frac{2m}{3\lambda}\le\frac n3$, and each of these two
inequalities is an equality exactly when $G$ is regular. Note
also that $\lambda\ge\lambda(T_{n,2})>0$, so a regular $G$ has
positive degree and hence no isolated vertex.

For (a), Theorem \ref{thm:two-lambda} gives
$\bk(G)\ge2\lambda-\frac{2m}{\lambda}\ge2\lambda-n$. If
$\bk(G)=2\lambda-n$, then $G$ is regular and equality holds in
Theorem \ref{thm:two-lambda}, so $G$ is complete bipartite or
regular complete multipartite. A regular complete bipartite
graph on $n$ vertices is $T_{n,2}$, which is excluded; hence
$G=T_{n,r}$ for some $r\ge3$ with $r\mid n$. Conversely,
$\lambda(T_{n,r})=\frac{r-1}{r}n>\frac n2\ge\lambda(T_{n,2})$
and $\bk(T_{n,r})=2\lambda(T_{n,r})-n$.

For (b), suppose that $\bk(G)=\lambda-\frac n3$. In the proof
of Theorem \ref{thm-confirm-Zhai-Lin}, the case $\lambda<\sqrt m$ gives
$\bk(G)>\frac n6>\lambda-\frac n3$; hence $\lambda\ge\sqrt m$,
and Theorem \ref{thm:edge-spectral-intro}, applied to $G$ with
its isolated vertices deleted, gives
$\bk(G)\ge\lambda-\frac{2m}{3\lambda}\ge\lambda-\frac n3$.
Therefore $G$ is regular, say of degree $d$, and
$\bk(G)=\lambda-\frac{2m}{3\lambda}$. Now $\lambda=d$ and
$m=\frac{nd}{2}$, so $\lambda^{2}-m=d\bigl(d-\frac n2\bigr)$,
and $\lambda\ge\sqrt m$ forces $d\ge\frac n2$. If
$d=\frac n2$, then $\lambda=\frac n2=\lambda(T_{n,2})$ and
$\bk(G)=\frac n6=\frac13\lambda$, so $G$ is tight by
Proposition \ref{prop:equality}. If $d>\frac n2$, then
$\lambda^{2}>m$ and Proposition \ref{prop:equality-edge}
yields the three stated conditions. 
Conversely, a tight graph satisfies
$\bk(G)=\frac n6=\lambda-\frac n3$. Suppose that $G$ is
$d$-regular with $d>\frac n2$ and satisfies the three conditions. Every component of a $d$-regular graph has at least $d+1>\frac n2+1$ vertices, so $G$ is connected.
Moreover, $\lambda=d$ and $m=\frac{nd}{2}<d^{2}$ give
$\lambda^{2}>m$, and Proposition
\ref{prop:equality-edge} gives 
$\bk(G)=\lambda-\frac{2m}{3\lambda}=\lambda-\frac n3$.
\end{proof}

\subsection{Four families of tight graphs}\label{sec:families}

We now describe four infinite families of tight graphs; Figure
\ref{fig:three-constructions} shows one member of each. Three
of the families are balanced blow-ups of a fixed graph, and the
fourth is a two-parameter family that contains the first. Recall that the \emph{blow-up} of a graph $F$ with respect to
nonnegative integers $(w_v)_{v\in V(F)}$ is obtained by
replacing each vertex $v$ by an independent set of size $w_v$,
so that $v$ is deleted when $w_v=0$, and joining two new
vertices when the corresponding vertices of $F$ are
adjacent; the blow-up is \emph{balanced}, or the
\emph{$t$-blow-up}, if all $w_v$ are equal to $t$.

\begin{figure}[h]
\centering
\begin{tikzpicture}[
  x=1cm, y=1cm,
  cls/.style={circle, draw=black!62, fill=white,
              minimum size=5mm, inner sep=0pt,
              font=\scriptsize, line width=0.5pt},
  spine/.style={cls, draw=bookcol, fill=bookcol!20,
                line width=1pt},
  page/.style={cls, draw=pagecol, fill=pagecol!12,
               line width=1pt},
  soft/.style={draw=black!42, line width=0.6pt},
  full/.style={draw=black!58, line width=0.8pt},
  faint/.style={draw=black!28, line width=0.5pt},
  spineedge/.style={draw=bookcol, line width=1.7pt},
  pageedge/.style={draw=pagecol, line width=0.8pt},
  lbl/.style={font=\scriptsize, text=black!60},
  cpt/.style={font=\small}
]

\begin{scope}[scale=1.2]
  \foreach \i in {1,2,3}{
    \foreach \j in {1,2,3,4}{
      \coordinate (v\i\j) at (\j,-\i);
    }
  }
  \foreach \k in {2,3}{
    \foreach \l in {2,3,4}{
      \draw[soft] (v11) -- (v\k\l);
    }
  }
  \draw[pageedge] (v11) to[bend right=22] (v33);
  \draw[pageedge] (v11) -- (v34);
  \draw[pageedge] (v22) -- (v33);
  \draw[pageedge] (v22) -- (v34);
  \draw[spineedge] (v11) -- (v22);
  \foreach \i in {1,2,3}{
    \foreach \j in {1,2,3,4}{
      \node[cls] at (v\i\j) {\color{black!45}$t$};
    }
  }
  \node[spine] at (v11) {$t$};
  \node[spine] at (v22) {$t$};
  \node[page]  at (v33) {$t$};
  \node[page]  at (v34) {$t$};
  \foreach \j in {1,2,3,4}{
    \node[lbl] at (\j,-0.55) {$\j$};
  }
  \foreach \i in {1,2,3}{
    \node[lbl] at (0.42,-\i) {$\i$};
  }
  \node[lbl] at (0.42,-0.55) {$i\backslash j$};
\end{scope}

\begin{scope}[shift={(9.0,-2.7)}, scale=1.2]
  \foreach \a in {90,210,330}{
    \coordinate (A\a) at (\a:1.62);
    \coordinate (B\a) at (\a:0.68);
  }
  \draw[full] (B90) -- (B210) -- (B330) -- cycle;
  \foreach \a in {90,210,330}{
    \draw[full] (A\a) -- (B\a);
  }
  \draw[pageedge] (A90) -- (A210);
  \draw[pageedge] (A90) -- (A330);
  \draw[spineedge] (A210) -- (A330);
  \foreach \a in {90,210,330}{
    \node[cls] at (B\a) {\color{black!45}$t$};
    \node[cls] at (A\a) {\color{black!45}$t$};
  }
  \node[spine] at (A210) {$t$};
  \node[spine] at (A330) {$t$};
  \node[page]  at (A90)  {$t$};
\end{scope}

\begin{scope}[shift={(3.0,-8.44)}, scale=1.1,
              cls/.append style={minimum size=5.4mm}]
  \foreach \i in {0,...,11}{
    \pgfmathsetmacro\ang{105-30*\i}
    \coordinate (P\i) at (\ang:2);
  }
  \foreach \i in {1,...,10}{
    \pgfmathtruncatemacro\j{\i+1}
    \draw[soft] (P\i) -- (P\j);
  }
  \foreach \i in {1,3,5,7,9}{
    \pgfmathtruncatemacro\j{\i+2}
    \draw[faint] (P\i) to[bend right=18] (P\j);
  }
  \draw[faint] (P4) -- (P8);   \draw[faint] (P8) -- (P0);
  \draw[faint] (P2) -- (P6);   \draw[faint] (P6) -- (P10);
  \draw[faint] (P10) -- (P2);
  \draw[faint] (P3) -- (P6);   \draw[faint] (P5) -- (P8);
  \draw[faint] (P7) -- (P10);  \draw[faint] (P9) -- (P0);
  \draw[faint] (P11) -- (P2);
  \foreach \i in {0,...,5}{
    \pgfmathtruncatemacro\j{\i+6}
    \draw[full] (P\i) -- (P\j);
  }
  \draw[pageedge] (P0) -- (P11);
  \draw[pageedge] (P0) -- (P4);
  \draw[pageedge] (P1) -- (P4);
  \draw[pageedge] (P11) to[bend right=18] (P1);
  \draw[spineedge] (P0) -- (P1);
  \foreach \i in {0,...,11}{
    \node[cls] at (P\i) {\color{black!45}$t$};
  }
  \node[spine] at (P0)  {$t$};
  \node[spine] at (P1)  {$t$};
  \node[page]  at (P4)  {$t$};
  \node[page]  at (P11) {$t$};
\end{scope}

\begin{scope}[shift={(6.0,-5.44)}, scale=1.2]
  \foreach \i in {1,2,3}{
    \foreach \j in {1,2,3}{
      \coordinate (u\i\j) at (\j,-\i);
    }
    \coordinate (r\i) at (4,-\i);
    \coordinate (c\i) at (\i,-4);
  }
  \foreach \k in {2,3}{
    \foreach \l in {2,3}{
      \draw[soft] (u11) -- (u\k\l);
    }
  }
  \draw[soft] (u11) -- (r2);
  \draw[soft] (u11) -- (c2);
  \draw[pageedge] (u11) to[bend right=22] (u33);
  \draw[pageedge] (u11) -- (r3);
  \draw[pageedge] (u11) -- (c3);
  \draw[pageedge] (u22) -- (u33);
  \draw[pageedge] (u22) -- (r3);
  \draw[pageedge] (u22) -- (c3);
  \draw[spineedge] (u11) -- (u22);
  \foreach \i in {1,2,3}{
    \foreach \j in {1,2,3}{
      \node[cls] at (u\i\j) {\color{black!45}$w$};
    }
    \node[cls] at (r\i) {\color{black!45}$a$};
    \node[cls] at (c\i) {\color{black!45}$b$};
  }
  \node[spine] at (u11) {$w$};
  \node[spine] at (u22) {$w$};
  \node[page]  at (u33) {$w$};
  \node[page]  at (r3)  {$a$};
  \node[page]  at (c3)  {$b$};
  \foreach \j in {1,2,3}{
    \node[lbl] at (\j,-0.55) {$\j$};
  }
  \node[lbl] at (4,-0.55) {$r$};
  \foreach \i in {1,2,3}{
    \node[lbl] at (0.42,-\i) {$\i$};
  }
  \node[lbl] at (0.42,-4) {$c$};
  \node[lbl] at (0.42,-0.55) {$i\backslash j$};
\end{scope}

\node[cpt] at (3.0,-5)  {(a) the $t$-blow-up of $K_3\otimes K_4$};
\node[cpt] at (9.0,-5)  {(b) the $t$-blow-up of $K_3\,\square\, K_2$};
\node[cpt] at (3.0,-11.0) {(c) the $t$-blow-up of $\overline{I}_{12}$};
\node[cpt] at (9.0,-11.0) {(d) the graph $J(a,b)$, $w=a+b$};

\end{tikzpicture}
\caption{One member of each of the four families of tight
graphs: the $t$-blow-ups $H_{4,t}$, $Y_{t,t}$ and
$\overline{I}_{12}(t)$ of $K_3\otimes K_4$, of the triangular
prism and of $\overline{I}_{12}$, and the graph $J(a,b)$ of
Example \ref{ex:J}. The graphs in (b), (c) and (d) generate
every tight graph on at most $42$ vertices (Subsection \ref{sec:small}).}
\label{fig:three-constructions}
\end{figure}

In Figure \ref{fig:three-constructions}, each circle is an independent set whose size is indicated, and two adjacent
circles form a complete bipartite graph. In (a) and (d), only
the edges at the class in position $(1,1)$ are drawn; in (d),
the class in row $i$ of the column marked $r$ is joined to every
class outside row $i$, and the class in column $j$ of the row
marked $c$ is joined to every class outside column $j$. In (c),
the twelve classes are placed so that all $36$ edges are shown.
In all four graphs, the thick red edge together with the
vertices of the blue classes forms a book of size exactly
$n/6$.

\smallskip
The first family is due to Erd\H{o}s, Faudree and Rousseau
\cite{EFR1994}; see also \cite{BN2005}.

\begin{example}\label{ex:EFR}
Let $s\ge4$ and $t\ge1$ be integers. Partition the vertex set
into $3s$ classes
\[
V_{ij},\qquad i\in\{1,2,3\},\quad j\in\{1,\dots,s\},
\]
each of size $t$. Join a vertex in $V_{ij}$ to a vertex in
$V_{k\ell}$ if and only if $i\ne k$ and $j\ne\ell$. Denote the
resulting graph by $H_{s,t}$. Equivalently, $H_{s,t}$ is the
$t$-blow-up of the tensor product $K_3\otimes K_s$.
\end{example}

The graph has $n=3st$ vertices. Every vertex has $2(s-1)t$
neighbors: there are two choices for the other row index, $s-1$
choices for a different column index, and $t$ vertices in the
resulting class. Thus $H_{s,t}$ is $2(s-1)t$-regular, and
$\lambda(H_{s,t})=2(s-1)t$. The handshaking lemma gives
\begin{equation*}
m=\frac{n\lambda(H_{s,t})}{2}=3s(s-1)t^2 .
\end{equation*}
Moreover,
\begin{equation*}
\lambda(H_{s,t})^2-m=(s-1)(s-4)t^2\ge 0,
\end{equation*}
so the family lies in the range of Theorem
\ref{thm:edge-spectral-intro}.

Consider an edge joining a vertex in $V_{ij}$ to a vertex in
$V_{k\ell}$. We have $i\ne k$ and $j\ne\ell$. A common neighbor
must lie in the third row and in a column different from both
$j$ and $\ell$. There are $s-2$ such columns and $t$ vertices in
each corresponding class. Therefore every edge lies in exactly
$(s-2)t$ triangles, and $\bk(H_{s,t})=(s-2)t$. Moreover,
\begin{align*}
\lambda(H_{s,t})-\frac{2m}{3\lambda(H_{s,t})}
=2(s-1)t-\frac{2\cdot3s(s-1)t^2}{3\cdot2(s-1)t}
=(s-2)t=\bk(H_{s,t})
\end{align*}
and
\begin{align*}
\lambda(H_{s,t})-\frac n3
=2(s-1)t-st=(s-2)t=\bk(H_{s,t}).
\end{align*}
Thus $H_{s,t}$ attains equality in Theorem
\ref{thm:edge-spectral-intro} and in the first term of Theorem
\ref{thm:fixed-order-intro}.

In the case $s=4$, we have $\lambda(H_{4,t})=n/2$,
$m=\lambda^2$ and $\bk(H_{4,t})=n/6=\lambda/3$; that is,
$H_{4,t}$ is tight and attains equality in Theorem
\ref{thm-confirm-Zhai-Lin} as well. For $s>4$, we have
\[
\lambda(H_{s,t})-\frac n2=\frac{(s-4)t}{2}>0 ,
\]
and $\lambda(T_{n,2})\le n/2$, so
$\lambda(H_{s,t})>\lambda(T_{n,2})$ while equality still holds
in $\bk(H_{s,t})=\lambda(H_{s,t})-n/3$. Consequently, neither
Theorem \ref{thm:edge-spectral-intro} nor the term
$\lambda-n/3$ can be increased by any positive universal
amount, even under the strict spectral Tur\'an condition.

\begin{remark}\label{rem:equality-edge}
As shown in Example \ref{ex:EFR}, the graphs $H_{s,t}$ with
$s\ge5$ attain equality in Theorem
\ref{thm:edge-spectral-intro} above the Nosal threshold. A
second family consists of the joins $G=F\vee\overline{K}_r$,
where $F$ is an $r$-regular triangle-free graph on $N\le6r$
vertices and every vertex of $F$ is joined to each of $r$ new
pairwise nonadjacent vertices. Here $m=\frac32Nr$, every edge
lies in exactly $r$ triangles, and $\lambda$ is the larger
root of $\lambda^2-r\lambda-Nr=0$, because the partition into
$V(F)$ and the new vertices is equitable. Hence
$\lambda-\frac{2m}{3\lambda}=\lambda-\frac{Nr}{\lambda}=r
=\bk(G)$, and $\lambda^2-m=r(\lambda-\frac N2)$ is
nonnegative precisely when $N\le6r$, with equality for $N=6r$.
The Tur\'an graph $T_{3k,3}=K_{k,k}\vee\overline{K}_k$ is the
case $F=K_{k,k}$; other examples are $C_5\vee\overline{K}_2$
and the join of the Petersen graph with $\overline{K}_3$.
\end{remark}

\smallskip
The second family is tight for every $n\equiv 0\pmod 6$.

\begin{example}\label{ex:prism}
Let $Y=K_3\,\square\, K_2$ be the triangular prism,
with triangles $a_1a_2a_3$ and $b_1b_2b_3$ and edges
$a_ib_i$ for $i\in\{1,2,3\}$. For positive integers
$p$ and $q$, let $Y_{p,q}$ be obtained from $Y$ by
replacing each $a_i$ by an independent set $A_i$ of
size $p$, each $b_i$ by an independent set $B_i$ of
size $q$, and each edge by a complete bipartite graph.
\end{example}

Put $n=3(p+q)$. Counting the edges inside the two
blown-up triangles and along the matching,
we have $m=3p^2+3pq+3q^2=\frac{1}{4} n^2+
\frac{3}{4} (p-q)^2$.
A vertex of $A_1$ has neighborhood
$A_2\cup A_3\cup B_1$ and a vertex of $B_1$ has
neighborhood $B_2\cup B_3\cup A_1$. These two sets are
disjoint, so the edges between $A_i$ and $B_i$ lie in
no triangle. An edge between $A_i$ and $A_j$ has
common neighborhood the third set $A_\ell$, and
similarly for the $B_i$. Hence $\bk\bigl(Y_{p,q}\bigr)=\max\{p,q\}$. 
For $p=q=k$, the graph $Y_{k,k}$ is $3k$-regular on $n=6k$
vertices, every edge inside a blown-up triangle lies in exactly
$k=\frac n6$ triangles, and every edge between $A_i$ and $B_i$
lies in none. Hence $Y_{k,k}$ is tight. It is not isomorphic to
$H_{4,t}$: taking $k=2t$ so that the orders agree, we have
$t(Y_{2t,2t})=16t^3$ and $t(H_{4,t})=24t^3$.

Next, we present the third family, which is tight for every 
$n\equiv 0\pmod {12}$. 

\begin{example}\label{ex:icosa}
Let $I_{12}$ denote the icosahedron and let $\overline{I}_{12}$
be its complement, a $6$-regular graph on $12$ vertices with
$36$ edges and $20$ triangles. For $t\ge1$ let
$\overline{I}_{12}(t)$ be its $t$-blow-up.
\end{example}

The six diameters of $\overline{I}_{12}$, which join antipodal
vertices of the icosahedron, lie in no triangle, and each of
the other $30$ edges lies in exactly two triangles. Hence
$\overline{I}_{12}(t)$ is $6t$-regular on $n=12t$ vertices,
every edge of it lies in $0$ or in $2t=\frac n6$ triangles, and
$\overline{I}_{12}(t)$ is tight. It is isomorphic neither to
$H_{4,t}$ nor to $Y_{2t,2t}$: the graphs $H_{4,t}$,
$\overline{I}_{12}(t)$ and $Y_{2t,2t}$ have $24t^3$, $20t^3$
and $16t^3$ triangles, respectively.

\smallskip
The fourth family of  graphs $J(a,b)$ is defined as follows. Its base graph $J^*$ contains
$K_3\otimes K_4$ as an induced subgraph, and the family contains
the graphs $H_{4,t}$.

\begin{example} \label{ex:J}
Let $J^*$ be the graph with vertex set
$[3]\times[3]\cup\{r_1,r_2,r_3\}\cup\{c_1,c_2,c_3\}$ in which
$(i,j)$ and $(k,\ell)$ are adjacent if and only if $i\ne k$ and
$j\ne \ell$, the vertex $r_i$ is adjacent to every $(k,\ell)$ with
$k\ne i$, the vertex $c_j$ is adjacent to every $(k,\ell)$ with
$\ell\ne j$, and $r_1,r_2,r_3,c_1,c_2,c_3$ are pairwise nonadjacent.
Thus $[3]\times[3]$ induces $K_3\otimes K_3$, and $r_i$ and $c_j$
are joined to the six vertices outside row $i$ and outside
column $j$ respectively. For $a,b\ge 0$ with
$w:=a+b$, let $J(a,b)$ be the blow-up of $J^*$ in which
every $(i,j)$ is replaced by $w$ vertices, every $r_i$ by $a$
vertices and every $c_j$ by $b$ vertices.
\end{example}

The graph $J(a,b)$ has $n=9w+3a+3b=12w$ vertices. A vertex
arising from $(i,j)$ has $4w$ neighbors arising from the four
vertices $(k,\ell)$ with $k\ne i$ and $\ell\ne j$, $2a$ neighbors
arising from $r_k$ with $k\ne i$, and $2b$ neighbors arising
from $c_{\ell}$ with $\ell\ne j$, that is, $6w$ neighbors in all; a
vertex arising from $r_i$ or from $c_j$ has $6w$ neighbors as
well. An edge between vertices arising from $(i,j)$ and $(k,\ell)$
has $w+a+b=2w$ common neighbors, namely those arising from the
unique $(i',j')$ with $i'\notin\{i,k\}$ and $j'\notin\{j,\ell\}$,
from $r_{i'}$ and from $c_{j'}$. An edge between vertices arising
from $(i,j)$ and from $r_k$ with $k\ne i$ has $2w$ common
neighbors, arising from the two vertices $(i',j')$ with
$i'\notin\{i,k\}$ and $j'\ne j$; the same holds for $c_{\ell}$. Hence
$J(a,b)$ is $6w$-regular and every edge lies in exactly
$2w=\frac n6$ triangles, so $J(a,b)$ is tight, with $t(J(a,b))=24w^3$. 
Identifying $r_i$ with $(i,4)$
shows that $J(w,0)$ is the balanced blow-up $H_{4,w}$ of
$K_3\otimes K_4$ from Example \ref{ex:EFR}. Transposing
$[3]\times[3]$ shows that $J(a,b)\cong J(b,a)$. For
$0\le b\le a$ with $a+b=w$, the graphs $J(a,b)$ are pairwise
non-isomorphic.

The following result shows that the number of tight graphs, that is, graphs attaining
equality in Theorem \ref{thm-confirm-Zhai-Lin} at the threshold
$\lambda(G)=\lambda(T_{n,2})$, grows at least linearly with $n$. 

\begin{proposition}\label{prop:many}
For every $n\equiv 0\pmod{12}$, there are at least $\frac n{24}+2$
pairwise non-isomorphic tight graphs on $n$ vertices, and for
every $n\equiv 6\pmod{12}$ there is at least one. \end{proposition}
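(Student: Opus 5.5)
For $n\equiv 6\pmod{12}$, write $n=6k$ and take the balanced prism blow-up $Y_{k,k}$ of Example \ref{ex:prism}. It was shown there to be tight, which settles this case.

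For $n\equiv 0\pmod{12}$, write $n=12w$ and use the family $J(a,b)$ of Example \ref{ex:J} with $a+b=w$ and $0\le b\le a$. There are $\lfloor w/2\rfloor+1$ such pairs. By Example \ref{ex:J} each $J(a,b)$ is tight on $12w$ vertices, and the graphs are pairwise non-isomorphic. To these we add two further tight graphs on $n$ vertices: $Y_{2w,2w}$ from Example \ref{ex:prism} and $\overline{I}_{12}(w)$ from Example \ref{ex:icosa}.

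The triangle counts separate the three kinds of graphs. Every $J(a,b)$ is $6w$-regular with each edge in exactly $2w$ triangles, so $t(J(a,b))=\frac{1}{3}\cdot\frac{n\cdot 6w}{2}\cdot 2w=24w^3$. By contrast, $t(\overline{I}_{12}(w))=20w^3$ and $t(Y_{2w,2w})=16w^3$. Hence neither extra graph is isomorphic to any $J(a,b)$, nor to each other.

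This gives at least $\lfloor w/2\rfloor+3$ pairwise non-isomorphic tight graphs. Since $\lfloor w/2\rfloor\ge \frac{w-1}{2}$, we get $\lfloor w/2\rfloor+3\ge \frac{w}{2}+\frac52>\frac{n}{24}+2$, because $\frac{n}{24}=\frac{w}{2}$.

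The main obstacle is that the pairwise non-isomorphism of the $J(a,b)$ with $b\le a$ is only asserted in Example \ref{ex:J}, not proved. If it needs justification, I would find an isomorphism invariant that recovers $b=\min\{a,b\}$. The first thing to try is the structure of edges versus non-edges. Vertices coming from $(i,j)$ and $(i,\ell)$ share a row, so they are non-adjacent but have many common neighbours. Likewise, the classes $r_i$ and $c_j$ form a specific independent configuration. I would count the pairs of non-adjacent vertices with a prescribed codegree, or the number of maximum independent sets, and express these counts as polynomials in $a$ and $b=w-a$. These polynomials should be strictly monotone in $b$ on $0\le b\le w/2$, which would force $J(a,b)\not\cong J(a',b')$ whenever $\{a,b\}\neq\{a',b'\}$.
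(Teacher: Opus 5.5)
Your proposal is correct and is essentially the paper's proof. It uses the same three families $Y_{2w,2w}$, $\overline{I}_{12}(w)$ and $J(a,b)$ with $0\le b\le a$, $a+b=w$, separated by the triangle counts $16w^3,20w^3,24w^3$, the same count $\lfloor w/2\rfloor+3\ge\frac{n}{24}+2$, and $Y_{k,k}$ for $n=6k$ with $k$ odd; the pairwise non-isomorphism of the $J(a,b)$ is taken from Example~\ref{ex:J}, exactly as the paper does.

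If you want to settle that point yourself, the multiset of twin-class sizes is a simpler invariant than codegree or independent-set counts. No two vertices of $J^*$ (nor of $K_3\otimes K_4$) have the same neighbourhood, so the twin classes of $J(a,b)$ are exactly the blown-up classes. Hence the smallest twin-class size is $b$ when $b\ge1$ and $w$ when $b=0$, which recovers $b$.
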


\begin{proof}
Let $n=12w$. The graphs $Y_{2w,2w}$, $\overline I_{12}(w)$ and
$J(a,b)$ with $0\le b\le a$ and $a+b=w$ are tight and have
$16w^3$, $20w^3$ and $24w^3$ triangles respectively, so no graph
of one family is isomorphic to a graph of another. By Example
\ref{ex:J}, the $\lfloor w/2\rfloor+1$ graphs $J(a,b)$ are
pairwise non-isomorphic, and
$\lfloor w/2\rfloor+3\ge\frac w2+2=\frac n{24}+2$. For $n=6k$
with $k$ odd, $Y_{k,k}$ is tight.
\end{proof}

\subsection{Unbalanced blow-ups and the strict inequality}
\label{sec:unbalanced}

We now perturb tight graphs so that the edge count exceeds
$e(T_{n,2})$ by exactly one.

\begin{proposition}\label{prop:triangle-lift}
Let $F$ be a tight graph on $N$ vertices, let $T$ be a triangle
of $F$, and let $t\ge1$. Let $G$ be the blow-up of $F$ in which
the three vertices of $T$ are replaced by independent sets of
size $t+1$ and every other vertex by an independent set of size
$t$. Then $n:=|V(G)|=Nt+3$ and
\[
e(G)=\Bigl\lfloor\frac{n^2}{4}\Bigr\rfloor+1,
\quad
\bk(G)=\Bigl\lfloor\frac n6\Bigr\rfloor+1,
\quad
\lambda(G)>\lambda(T_{n,2}).
\]
\end{proposition}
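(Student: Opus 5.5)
Write $N=6\beta$, so $F$ is $3\beta$-regular with $e(F)=9\beta^2$, and every edge of $F$ lies in $0$ or $\beta$ triangles. Let $T=uvw$ and write $w_x\in\{t,t+1\}$ for the class size of $x\in V(F)$. The plan is to compute $e(G)$ and $\bk(G)$ directly from the weights, then get the spectral statement for free.

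\textbf{Edge count.} We have $e(G)=\sum_{xy\in E(F)}w_xw_y$, and we expand each product as $t^2+t(\mathbf 1_x+\mathbf 1_y)+\mathbf 1_x\mathbf 1_y$, where $\mathbf 1_x$ indicates $x\in V(T)$. The three terms sum to:
\begin{itemize}
\item[\textbullet] $9\beta^2t^2$ from the $t^2$ part;
\item[\textbullet] $t\sum_{x\in T}d_F(x)=9\beta t$ from the linear part;
\item[\textbullet] $3$ from the quadratic part, one for each edge of $T$.
\end{itemize}
Hence $e(G)=9\beta^2t^2+9\beta t+3$. With $n=6\beta t+3$ odd, $\lfloor n^2/4\rfloor=(n^2-1)/4=9\beta^2t^2+9\beta t+2$, so $e(G)=\lfloor n^2/4\rfloor+1$. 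Then $\lambda(G)\ge 2e(G)/n>\frac{2}{n}\lfloor n^2/4\rfloor$, and for odd $n$ this gives $\lambda(G)>\lambda(T_{n,2})$, by the same computation as in the remark after Problem \ref{prob-ZL}.

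\textbf{Booksize.} In a blow-up, an edge of $G$ coming from $xy\in E(F)$ has codegree $\sum_{z\in N_F(x)\cap N_F(y)}w_z$. Since $t_{xy}\in\{0,\beta\}$, this codegree is at most $\beta t+|N_F(x)\cap N_F(y)\cap V(T)|$. By Lemma \ref{lem:structure}(a), $F$ is $K_4$-free and every vertex outside a triangle sees at most two of its vertices, so the correction term is controlled as follows.
\begin{itemize}
\item[\textbullet] If $xy$ is an edge of $T$, the common neighbours in $T$ are just the third vertex, contributing $1$.
\item[\textbullet] If exactly one of $x,y$ lies in $T$, say $x=u$, then $y$ is adjacent to at most two vertices of $T$. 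The common neighbours of $u$ and $y$ inside $T$ are the vertices among $v,w$ adjacent to $y$. If $y$ were adjacent to both $v$ and $w$, then $y,u,v,w$ would form a $K_4$, which is impossible, so the contribution is at most $1$.
\item[\textbullet] If neither endpoint lies in $T$, a common neighbour $z\in T$ of $x,y$ gives the triangle $xyz$. If two vertices $z,z'$ of $T$ were common neighbours, then $x,z,z'$ would be a triangle with $y$ adjacent to all three, a contradiction. So the contribution is again at most $1$.
\end{itemize}
Thus $\bk(G)\le\beta t+1$, and equality is attained by an edge of $T$, since $t_{uv}=\beta$ in $F$ (every edge of a triangle lies in exactly $\beta$ triangles). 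Finally $\lfloor n/6\rfloor=\lfloor \beta t+1/2\rfloor=\beta t$, so $\bk(G)=\lfloor n/6\rfloor+1$.

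\textbf{Main obstacle.} The delicate step is the case analysis bounding the correction term by $1$, which is where Lemma \ref{lem:structure}(a) and $K_4$-freeness are essential. The remaining steps are routine arithmetic.
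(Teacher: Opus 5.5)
Your proof is correct and is essentially the paper's. It uses the same expansion of $\sum_{xy\in E(F)}w_xw_y$, the same codegree formula $t|D|+|D\cap V(T)|$ with $|D\cap V(T)|\le 1$ from $K_4$-freeness, and the same edge of $T$ for the matching lower bound. The paper gets $|D\cap V(T)|\le1$ in one step: two vertices of $D\cap V(T)$ are adjacent and span a $K_4$ with the ends of the edge, so your three cases collapse to one.

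There is one slip, in the spectral step. For odd $n$, $\frac{2}{n}\lfloor n^2/4\rfloor=\frac{n^2-1}{2n}$ is smaller than $\lambda(T_{n,2})=\frac12\sqrt{n^2-1}$, so the bound $\lambda(G)>\frac{2}{n}\lfloor n^2/4\rfloor$ as you wrote it does not suffice. Keep the $+1$ instead: $\lambda(G)\ge\frac{2e(G)}{n}=\frac{n^2+3}{2n}>\frac n2\ge\lambda(T_{n,2})$, which is the computation you cite.
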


\begin{proof}
Put $t_v=t+1$ for $v\in T$ and $t_v=t$ otherwise, so
$n=Nt+3$. Since $F$ is $\frac N2$-regular with
$e(F)=N^2/4$ and $e(F[T])=3$,
\[
e(G)=\sum_{uv\in E(F)}t_ut_v
=\frac{N^2}{4}t^2+t\sum_{v\in T}d_F(v)+3
=\frac{N^2}{4}t^2+\frac{3N}{2}t+3 .
\]
Here $N$ is even, so $n$ is odd and
$\lfloor n^2/4\rfloor=\frac{N^2}{4}t^2
+\frac{3N}{2}t+2$, which gives the first equality. The third
assertion follows, since
$\lambda(G)\ge\frac{2e(G)}{n}
=\frac{2}{n}\bigl(\lfloor n^2/4\rfloor+1\bigr)
>\frac n2\ge\lambda(T_{n,2})$.

For the second, let $uv\in E(F)$ and put
$D=N_F(u)\cap N_F(v)$. The codegree in $G$ of an edge
joining the corresponding classes is $t|D|+|D\cap T|$.
If $|D\cap T|\ge2$, then two adjacent vertices of $T$
together with $u$ and $v$ span a $K_4$ in $F$,
contradicting Lemma \ref{lem:structure}(a); hence
$|D\cap T|\le1$, and $|D|\le N/6$ gives $\bk(G)\le tN/6+1$.
Conversely write $T=\{a,b,c\}$. Then $a\in N_F(b)\cap N_F(c)$,
so the edge $bc$ has positive codegree and therefore codegree
exactly $N/6$, and the edges of $G$ joining the classes of $b$
and $c$ lie in $tN/6+1$ triangles. 
Since $6\mid N$, we have 
$\lfloor n/6\rfloor=\lfloor (Nt+3)/6\rfloor=tN/6$.
\end{proof}

Proposition \ref{prop:triangle-lift} applies to every tight
graph. Applied to $Y_{k,k}$ it gives graphs on every
$n\equiv 3\pmod 6$ vertices; applied to $H_{4,t}$,
$\overline I_{12}(w)$ and $J(a,b)$ it gives further families on
$n\equiv 3\pmod{12}$ vertices. Hence, Theorem
\ref{thm-confirm-Zhai-Lin} cannot be improved, and the constant
$\frac16$ is best possible.

\subsection{Tight graphs of small order}\label{sec:small}

Definition \ref{def:tight} makes the search for tight graphs a finite problem, and Lemmas \ref{lem:structure}
and \ref{lem:parity} reduce it for an exhaustive computer
search. We list some tight graphs on at most $42$ vertices in Table
\ref{tab:tight-small}.

\begin{table}[htbp]
\centering
\begin{tabular}{@{}ccl@{}}
\toprule
$n$ & Number & Tight graphs on $n$ vertices \\
\midrule
$6$  & $1$ & $Y_{1,1}=K_3\,\square\, K_2$ \\
$12$ & $3$ & $Y_{2,2}$, \ $\overline I_{12}$, \
              $J(1,0)=K_3\otimes K_4$ \\
$18$ & $1$ & $Y_{3,3}$ \\
$24$ & $4$ & $Y_{4,4}$, \ $\overline I_{12}(2)$, \ $J(2,0)$, \
              $J(1,1)$ \\
$30$ & $1$ & $Y_{5,5}$ \\
$36$ & $4$ & $Y_{6,6}$, \ $\overline I_{12}(3)$, \ $J(3,0)$, \
              $J(2,1)$ \\
$42$ & $1$ & $Y_{7,7}$ \\
\bottomrule
\end{tabular}
\caption{Some tight graphs on at most $42$ vertices.}
\label{tab:tight-small}
\end{table}

\section{Stability results}

\label{sec:stability}

In this section, we prove Theorems \ref{thm:edit-distance}
and \ref{thm:edge-spectral-stability}. Both theorems have the
same shape: a graph whose spectral radius is close to the
threshold either contains a book of almost extremal size or
is close to the extremal graph in edit distance. Both proofs rely on the machinery of Section \ref{sec:weighted}. The
comparison of $m$ with $\lambda^2$ separates the two cases, and \eqref{eq:TS},
\eqref{eq:rhoT} and \eqref{eq:T-upper} supply the estimates
in the case $m\le\lambda^2$.
The two proofs differ in how they locate the near-extremal
graph. 

For a prescribed order, the case $m>\lambda^2$ is
covered by the stability theorem of Bollob\'as and Nikiforov,
and in the case $m\le\lambda^2$ Theorem
\ref{thm:edge-spectral-intro} already forces a large book
unless $G$ is complete bipartite. This yields Theorem
\ref{thm:direct}, a spectral form of the Bollob\'as--Nikiforov
theorem with the same explicit exponents, from which Theorem
\ref{thm:edit-distance} follows by balancing the two sides.

For a prescribed size, there is no edge condition to fall
back on, because the hypothesis
$\lambda(G)\ge(1-\delta)\sqrt m$ has no counterpart in terms
of $m$ alone. Instead, Lemma \ref{lem:weighted-cut} uses the
weighted quantities $T$ and $S$ to find a vertex whose
neighborhood cut has few edges on either side, and Lemma
\ref{lem:bipartite-completion} shows that a bipartite graph
with small spectral deficit $m-\lambda^2$ is close to a
complete bipartite graph, with edit distance linear in the
deficit. In both theorems, the dependence of $\delta$ on
$\varepsilon$ is explicit.

\subsection{Proof of Theorem \ref{thm:edit-distance}}

We start from the stability theorem of Bollob\'as and
Nikiforov \cite{BN2005} for the edge condition. 

\begin{theorem}[Bollob\'{a}s--Nikiforov \cite{BN2005}]\label{thm:BN-stability}
Let $0<\alpha<10^{-5}$. If $G$ is an $n$-vertex graph satisfying
$$e(G)\ge \left( \frac{1}{4} -\alpha \right)n^2,$$
 then either $G$ contains a book of size 
$\bk(G)>\left(\frac16-2\alpha^{1/3}\right)n$, 
or $G$ contains an induced bipartite subgraph $H$ of order  $ |H|\ge \left(1-\alpha^{1/3} \right)n$ and with minimum degree $ \delta(H)\ge \left(\frac12-4\alpha^{1/3}\right)n$. 
\end{theorem}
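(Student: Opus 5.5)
Since Theorem \ref{thm:BN-stability} is quoted from \cite{BN2005}, here is only a sketch of how one could prove it with the tools already recalled. The driver is the counting inequality \eqref{eq-BN}, $(6\beta-n)\,t(G)\ge\beta\,(Q-mn)$, where $\beta=\bk(G)$, together with the vertex identity \eqref{eq:degree-neighborhood} used with uniform weights. The plan is to assume $\beta\le(\frac16-2\alpha^{1/3})n$, show that $G$ has few triangles and nearly uniform degrees, and then read off a near-complete bipartite structure from the neighbourhood of a well-chosen vertex.

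\emph{Step 1 (few triangles, concentrated degrees).} Under the assumption, $n-6\beta\ge12\alpha^{1/3}n$. Hence \eqref{eq-BN} gives $Q\le mn$ and
\[
t(G)\le\frac{\beta\,(mn-Q)}{n-6\beta}.
\]
By Cauchy--Schwarz, $Q\ge4m^2/n$, so $mn-Q\le m(n^2-4m)/n\le 4\alpha mn$, using $m\ge(\frac14-\alpha)n^2$. This gives $t(G)=O(\alpha^{2/3}n^3)$. The same two facts give
\[
\sum_v\Bigl(d(v)-\frac{2m}{n}\Bigr)^2=Q-\frac{4m^2}{n}\le mn-\frac{4m^2}{n}=O(\alpha n^3).
\]
So all but $O(\alpha^{1/3}n)$ vertices have degree $(\frac12\pm O(\alpha^{1/3}))n$.

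\emph{Step 2 (a good vertex).} Summing \eqref{eq:degree-neighborhood} over all $v$ gives
\[
\sum_v s_v=\sum_v t_v-(Q-mn)=3t(G)+(mn-Q)=O(\alpha^{2/3}n^3).
\]
By Markov's inequality we can pick $v$ with $d(v)=(\frac12\pm O(\alpha^{1/3}))n$, $t_v=O(\alpha^{2/3}n^2)$ and $s_v=O(\alpha^{2/3}n^2)$ simultaneously. Put $A=N(v)$ and $B=V\setminus N(v)$. Then $e(A)=t_v$ and $e(B)\le s_v+d(v)$ are both $O(\alpha^{2/3}n^2)$, and $|A|,|B|\approx n/2$. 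Since $m\ge(\frac14-\alpha)n^2$, all but $O(\alpha^{2/3}n^2)$ of the $|A||B|$ cross pairs are edges.

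\emph{Step 3 (cleaning).} Repeatedly delete a vertex with more than $c\,\alpha^{1/3}n$ neighbours on its own side, or fewer than $(\frac12-4\alpha^{1/3})n$ neighbours on the other side. Counting the missing cross pairs and the internal edges shows that only $O(\alpha^{1/3}n)$ vertices are deleted. What remains should have no internal edges, and hence be an induced bipartite $H$ with $|H|\ge(1-\alpha^{1/3})n$ and $\delta(H)\ge(\frac12-4\alpha^{1/3})n$.

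The main obstacle is Step 3. Deleting vertices that have many internal neighbours still leaves internal edges, so the deletion rule alone does not make $H$ independent on each side. The first thing to try is the following. If an internal edge $xy\subseteq A$ survives, both $x$ and $y$ have about $n/2$ neighbours in $B$. Their common neighbourhood in $B$ is then large, of size about $|N_B(x)|+|N_B(y)|-|B|\ge(\frac12-O(\alpha^{1/3}))n$, which exceeds $\beta$ and contradicts the assumed bound on $\bk(G)$. Making this contradiction go through is exactly where the sharp constants $2\alpha^{1/3}$ and $4\alpha^{1/3}$ have to be tracked, using $\alpha<10^{-5}$ to absorb the lower-order terms.
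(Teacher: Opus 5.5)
The paper does not prove Theorem~\ref{thm:BN-stability}. It imports the result from \cite{BN2005} and uses it only in Case~1 of the proof of Theorem~\ref{thm:direct}, so there is no in-paper argument to compare with. Your route is sound. It is the uniform-weight analogue of the paper's Lemma~\ref{lem:weighted-cut}: there, a Perron-weighted average of $t_v+s_v$ picks a vertex whose neighbourhood cut has few internal edges; here, the plain average $\frac1n\sum_v(t_v+s_v)=\frac1n\bigl(6t(G)+mn-Q\bigr)$ does the same job. The obstacle you flag in Step~3 is not a real one. Your own codegree argument closes it with lots of room, so the rule about internal neighbours can be dropped and a single round of deletion suffices. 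The constants are also not tight in the codegree step. They enter through $n-6\beta\ge 12\alpha^{1/3}n$, the deletion count and the degree bound, and they come out with slack.

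\emph{Completing Steps 1--3 with explicit constants.} Put $\epsilon=\alpha^{1/3}<0.022$.

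\emph{Step 1.} You get $m\le n^2/4$ from $4m^2/n\le Q\le mn$, then $mn-Q\le\alpha n^3$ and $t(G)\le\epsilon^2n^3/72$. If $\beta=0$, inequality \eqref{eq-BN} only yields $t(G)=0$; in that case read $Q\le mn$ off $\sum_v s_v=\sum_v t_v-(Q-mn)$ instead.

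\emph{Step 2.} Take $v$ minimizing $t_v+s_v$, so $t_v+s_v\le(\epsilon^2/12+\epsilon^3)n^2$. With $A=N(v)$ and $B=V\setminus A$, you have $e(A)=t_v$ and $e(B)=s_v$ exactly, because $v\in B$ has no neighbour in $B$ (compare \eqref{eq:sv-open-closed}). Your bound $s_v+d(v)$ is lossy, and $d(v)$ is not $O(\alpha^{2/3}n^2)$ when $n$ is small. Hence at most $|A||B|-m+t_v+s_v\le(\epsilon^2/12+2\epsilon^3)n^2$ cross pairs are missing. Also $\frac{n^2}{4}-\bigl(|A|-\frac n2\bigr)^2=|A||B|\ge m-t_v-s_v$ gives $\bigl||A|-\frac n2\bigr|\le 0.36\,\epsilon n$, so you need no degree condition on $v$.

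\emph{Step 3.} Let $D$ be the set of vertices missing more than $\epsilon n$ cross pairs. Then $|D|\le(\epsilon/6+4\epsilon^2)n<\epsilon n/3$. Every vertex of $A\setminus D$ has at least $|B|-\epsilon n-|D|\ge(\frac12-1.7\epsilon)n$ neighbours in $B\setminus D$, and symmetrically for $B\setminus D$. An edge inside $A\setminus D$ would therefore have at least $2(\frac12-1.7\epsilon)n-|B|\ge(\frac12-3.8\epsilon)n>(\frac16-2\epsilon)n\ge\beta$ common neighbours, which is impossible; the same holds inside $B\setminus D$. So $H=G-D$ is an induced bipartite subgraph with $|H|\ge(1-\epsilon/3)n$ and $\delta(H)\ge(\frac12-1.7\epsilon)n$, which is stronger than required.
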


Theorem \ref{thm:edit-distance} follows from the following spectral counterpart of Theorem \ref{thm:BN-stability}. 

\begin{theorem}\label{thm:direct}
Let $0<\alpha<10^{-5}$ and let $G$ be an $n$-vertex graph satisfying
\begin{equation*} 
\lambda(G) \ge \left(\frac12 -\alpha\right)n.
\end{equation*}
Then one of the following holds:
{\rm (a)} $G$ contains a book of size 
$\bk(G)>\left(\frac16-2\alpha^{1/3}\right)n$; 
{\rm (b)} $G$ contains an induced bipartite subgraph $H$ such that 
$ |H|\ge \left(1-\alpha^{1/3} \right)n$ and 
$ \delta(H)\ge \left(\frac12-4\alpha^{1/3}\right)n$.
\end{theorem}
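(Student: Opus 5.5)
We split according to how $m=e(G)$ compares with $\lambda^2$, where $\lambda=\lambda(G)$. This follows the outline at the start of Section~\ref{sec:stability}.

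\emph{Case $m>\lambda^2$.} The spectral hypothesis turns directly into an edge condition:
\[
m>\lambda^2\ge\Bigl(\tfrac12-\alpha\Bigr)^2n^2=\Bigl(\tfrac14-\alpha+\alpha^2\Bigr)n^2\ge\Bigl(\tfrac14-\alpha\Bigr)n^2 .
\]
The Bollob\'as--Nikiforov stability theorem (Theorem~\ref{thm:BN-stability}) applies with the same $\alpha$. Its two alternatives are exactly (a) and (b), so this case needs no further work.

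\emph{Case $m\le\lambda^2$.} Delete the isolated vertices of $G$ to obtain $G'$. This changes neither $m$, $\lambda$ nor $\bk$. There are two subcases.

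If $G'$ is not complete bipartite, Theorem~\ref{thm:edge-spectral-intro} gives $\bk(G)\ge\lambda-\frac{2m}{3\lambda}$. Since $m\le\lambda^2$, this is at least $\frac13\lambda$. With $\lambda\ge(\frac12-\alpha)n$ we get
\[
\bk(G)\ge\Bigl(\tfrac16-\tfrac{\alpha}{3}\Bigr)n>\Bigl(\tfrac16-2\alpha^{1/3}\Bigr)n ,
\]
because $\alpha/3<2\alpha^{1/3}$ for $0<\alpha<1$. So (a) holds.

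If $G'=K_{a,b}$, then $G=K_{a,b}\cup rK_1$ and $ab=\lambda^2\ge(\frac12-\alpha)^2n^2$. We take $H=K_{a,b}$, which is an induced bipartite subgraph of $G$. It remains to check the two bounds in (b); this is the only computational step.

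For the order, $a+b\ge2\sqrt{ab}\ge(1-2\alpha)n\ge(1-\alpha^{1/3})n$. The last inequality holds because $2\alpha\le\alpha^{1/3}$ when $\alpha<10^{-5}$.

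For the minimum degree, $\delta(H)=\min\{a,b\}$. Say $a\le b$. From $b\le n-a$ we get $a(n-a)\ge ab\ge(\frac14-\alpha)n^2$. Writing $a=(\frac12-x)n$ with $x\ge0$, this becomes $\frac14-x^2\ge\frac14-\alpha$, so $x\le\sqrt\alpha$. Hence
\[
\delta(H)\ge\Bigl(\tfrac12-\sqrt\alpha\Bigr)n\ge\Bigl(\tfrac12-4\alpha^{1/3}\Bigr)n ,
\]
since $\sqrt\alpha\le\alpha^{1/3}$ for $\alpha<1$. So (b) holds.

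\emph{Main obstacle.} Essentially all the difficulty lies in Theorem~\ref{thm:BN-stability}, which we invoke as stated. The only care needed here is to confirm that the exponents match in the complete-bipartite subcase, and that the strict inequality in (a) survives, which it does since $\frac{\alpha}{3}<2\alpha^{1/3}$.
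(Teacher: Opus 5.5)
Your proposal is correct and matches the paper's proof step for step. You split on $m>\lambda^2$ versus $m\le\lambda^2$, invoke Theorem~\ref{thm:BN-stability} in the first case, and in the second use Theorem~\ref{thm:edge-spectral-intro} to get $\bk(G)\ge\frac13\lambda$ unless the non-isolated part is some $K_{a,b}$, which you handle by AM--GM and $a(n-a)\ge(\frac14-\alpha)n^2$. Your constants are marginally sharper ($\frac{\alpha}{3}$ and $1-2\alpha$ where the paper has $\frac{2\alpha}{3}$ and $1-4\alpha$), because you use $(\frac12-\alpha)^2$ directly instead of $\frac14-\alpha$.
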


The standard route to a statement of this type is
Nikiforov's reduction \cite{Niki2009ejc,Niki2008}: one
passes to an induced subgraph $H$ on $(1-o(1))n$
vertices with either $\lambda(H)>(\frac12+o(1))|H|$, or
$\lambda(H)\ge(\frac12-o(1))|H|$ and
$\delta(H)\ge(\frac12-o(1))|H|$, and then treats the two
cases separately, using Theorem \ref{thm-confirm-Zhai-Lin} in the
first and the Andr\'asfai--Erd\H{o}s--S\'os theorem
\cite{AES1974} in the second. Our proof of Theorem
\ref{thm:direct} avoids this reduction; it is shorter and
gives explicit exponents. The comparison of $m$ with
$\lambda^2$ separates the two alternatives: Theorem
\ref{thm:BN-stability} handles the case $m > \lambda^2$, and
Theorem \ref{thm:edge-spectral-intro} settles the case
$m\le\lambda^2$.

\begin{proof}[Proof of Theorem \ref{thm:direct}]
Write $m=e(G)$ and $\lambda=\lambda(G)$. The
hypothesis  gives
\begin{equation}\label{eq:direct-rho-square}
\lambda^2\ge\Bigl(\frac14-\alpha\Bigr)n^2 .
\end{equation}
 We distinguish two cases according to the
position of $m$ relative to $\lambda^2$.

\smallskip
\noindent\textbf{Case 1: $m>\lambda^2$.}
Together with \eqref{eq:direct-rho-square}, this gives
$e(G)>\bigl(\frac14-\alpha\bigr)n^2$, and Theorem
\ref{thm:BN-stability} applied to $G$ yields alternative
(a) or alternative (b).

\smallskip
\noindent\textbf{Case 2: $m\le\lambda^2$.} 
Let $H$ be obtained from $G$ by deleting all  isolated vertices. Then $H$ is an induced subgraph of $G$ with $e(H)=m$ and $\lambda (H)=\lambda $. Consequently, we have $\lambda^2 (H) \ge (\frac{1}{4} - \alpha)n^2$ and $\lambda (H)\ge \sqrt{m}$. 
Assume first that $H$ is not complete bipartite. 
Then Theorem \ref{thm:edge-spectral-intro} applies to $H$ and
gives $\bk(H)\ge\lambda(H)-\frac{2m}{3\lambda(H)}
\ge\frac13\lambda(H)$. 
Combining with \eqref{eq:direct-rho-square}, we get
\[
\bk(G) = \bk (H)\ge \frac{1}{3} \lambda(H) 
\ge\frac n6\sqrt{1-4\alpha}
\ge\Bigl(\frac16-\frac{2\alpha}{3}\Bigr)n .
\]
Since $\frac{2\alpha}{3}<2\alpha^{1/3}$, alternative (a)
holds.

Assume now that $H=K_{s,t}$ for some integers
$1\le s\le t$ and $s+t\le n$. Then $\lambda(H) 
=\sqrt{st}$, so
\eqref{eq:direct-rho-square} becomes
$st\ge\bigl(\frac14-\alpha\bigr)n^2 $.
By the AM-GM inequality, we have 
\[
|H|=s+t\ge 2\sqrt{st}\ge n\sqrt{1-4\alpha}
\ge(1-4\alpha)n>\bigl(1-\alpha^{1/3}\bigr)n .
\]
For the minimum degree, note that $t\le n-s$, so
$s(n-s)\ge st\ge 
\bigl(\frac14-\alpha\bigr)n^2$; that is,
$(\frac sn-\frac{1}{2})^2\le\alpha$. Hence
\[
\delta(H)=s\ge\Bigl(\frac12-\sqrt\alpha\Bigr)n
>\Bigl(\frac12-4\alpha^{1/3}\Bigr)n .
\]
Thus $H$ is an induced bipartite subgraph 
satisfying alternative (b).
\end{proof}

Now, we are ready to prove Theorem \ref{thm:edit-distance}. 

\begin{proof}[{\bf Proof of Theorem \ref{thm:edit-distance}}]
Put $\delta=\min\{10^{-6},(\varepsilon/17)^3\}$ and
$\theta=\delta^{1/3}$, so that $\theta\le\varepsilon/17$
and $\theta\le10^{-2}$. Apply Theorem \ref{thm:direct}
with $\alpha=\delta$. If alternative (a) holds, then
\[
\bk(G)>\Bigl(\frac16-2\theta\Bigr)n
\ge\Bigl(\frac16-\varepsilon\Bigr)n. 
\]
 Assume that alternative (b)
holds, and fix an induced bipartite subgraph $H$ of $G$
 such that $|H|\ge(1-\theta)n$ and
$\delta(H)\ge\bigl(\frac12-4\theta\bigr)n$. 
Put $V(H)=X\sqcup Y$ and 
$R=V(G)\setminus V(H)$, so that $|R|\le\theta n$. 

Every $H$-neighbor of a vertex of $X$ lies in $Y$,
whence $|Y|\ge\delta(H)\ge(\frac12-4\theta)n$; by
symmetry the same bound holds for $|X|$. Since
$|X|+|Y|\le n$, we obtain
\begin{equation}\label{eq:parts-balanced}
\Bigl(\frac12-4\theta\Bigr)n\le|X|,|Y|
\le\Bigl(\frac12+4\theta\Bigr)n .
\end{equation}
Hence each $x\in X$ is nonadjacent to at most
$|Y|-\deg_H(x)\le8\theta n$ vertices of $Y$, and so
\begin{equation}\label{eq:missing-cross}
\bigl|\{\,xy:x\in X,\ y\in Y,\ xy\notin E(G)\,\}\bigr|
\le 8\theta n^2 .
\end{equation}
Moreover $H$ is induced and bipartite, so $G$ has no
edge inside $X$ and no edge inside $Y$.

We next balance the two sides. Put $p=\lfloor n/2\rfloor$.
We choose a set $X'$ with $|X'|=p$ as follows, and we put
$Y'=V(G)\setminus X'$, so that $|Y'|=\lceil n/2\rceil$. If
$|X|\ge p$, choose $X'\subseteq X$. Then $Y\subseteq Y'$,
and \eqref{eq:parts-balanced} together with $p\ge(n-1)/2$
gives $|X\setminus X'|=|X|-p\le 4\theta n+\frac12$. If
$4\theta n\ge\frac12$, this is at most $8\theta n$; if
$4\theta n<\frac12$, then $|X|-p$ is a nonnegative integer
smaller than $1$, hence zero. In either case
$|X\setminus X'|\le 8\theta n$. If $|X|<p$, choose
$X'\supseteq X$ by adding $p-|X|\le 4\theta n$ vertices of
$R\cup Y$. Then $X\subseteq X'$ and
$|Y\setminus Y'|=|Y\cap X'|\le 4\theta n$. In both cases, 
we put 
\[
B=R\cup(X\setminus X')\cup(Y\setminus Y') ,
\qquad
|B|\le\theta n+8\theta n=9\theta n .
\]

Let $K$ be the complete bipartite graph with parts $X'$
and $Y'$; since $|X'|=\lfloor n/2\rfloor$ and
$|Y'|=\lceil n/2\rceil$, we have $K\cong T_{n,2}$. We
estimate $|E(G)\bigtriangleup E(K)|$ by splitting the
pairs of vertices according to whether they meet $B$.
The pairs meeting $B$ number at most
$|B|n\le9\theta n^2$. A pair avoiding $B$ has both ends
in $(X\cap X')\cup(Y\cap Y')$. If both ends lie in
$X\cap X'\subseteq X$, or both lie in
$Y\cap Y'\subseteq Y$, then the pair is a nonedge of $G$
and a nonedge of $K$, so it contributes nothing.
Otherwise one end lies in $X$ and the other in $Y$, the
pair is an edge of $K$, and by \eqref{eq:missing-cross}
at most $8\theta n^2$ such pairs fail to be edges of
$G$. Therefore
$|E(G)\bigtriangleup E(K)|
\le 9\theta n^2+8\theta n^2=17\theta n^2
\le\varepsilon n^2 $,
which is the second alternative.
\end{proof}

\subsection{Proof of Theorem \ref{thm:edge-spectral-stability}}

The argument has two independent ingredients. First,
the Perron-weighted quantities $T$ and $S$ of Section
\ref{sec:weighted} identify a cut with few internal
edges. Second, a bipartite graph that is nearly extremal
for the inequality $\lambda(F)^2\le e(F)$ is close in
edit distance to a complete bipartite graph.

The second ingredient is the following lemma. Li, Liu
and Zhang \cite[Lemma 4.4]{LLZ-edge-spectral} proved by a Perron vector analysis that for
$\varepsilon\in(0,0.01)$ every bipartite graph with
$\lambda(G)\ge(1-\varepsilon^4/100)\sqrt m$ is within edit
distance $\varepsilon m$ of a complete bipartite graph.
Our lemma bounds the edit distance by an explicit linear
function of the spectral deficit $m-\lambda^2$, so the
threshold $\varepsilon^4/100$ improves to one that is
linear in $\varepsilon$, and the proof runs through
$2\times2$ minors instead of the Perron vector.

\begin{lemma} \label{lem:bipartite-completion}
Let $G$ be a bipartite graph with $m $ edges and spectral radius $\lambda=\lambda(G)$. Then there exist disjoint vertex sets $X,Y\subseteq V(G)$ such that
\begin{equation*}
\bigl| E(G)\, {\triangle}\, E(K_{X,Y})\bigr|
\le 3\bigl( m -\lambda^2\bigr).
\end{equation*}
Moreover, $X$ and $Y$ can be taken to be
$N(v)$ and $N(u)$ for a suitable edge $uv$ of $G$.
\end{lemma}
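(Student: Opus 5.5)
The plan is to work with the biadjacency matrix and use $2\times2$ minors, as the paragraph before the lemma suggests. We count "bad" $2\times2$ configurations globally and then average over the edges.

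\textbf{Setup.} Let $G$ have bipartition $(P,Q)$. If $m=0$ the claim is trivial, so assume $m\ge1$. Let $M$ be the $P\times Q$ biadjacency matrix, so $\|M\|_F^2=m$. Let $\sigma_1\ge\sigma_2\ge\cdots$ be the singular values of $M$, so that $\sigma_1=\lambda$ and $\sum_i\sigma_i^2=m$.

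\textbf{Step 1: bounding the sum of squared minors.} By Cauchy--Binet, the sum of the squares of all $2\times2$ minors of $M$ equals the second elementary symmetric function of the $\sigma_i^2$:
\[
\sum_{\text{minors}}\det{}^2=\frac12\Bigl(m^2-\sum_i\sigma_i^4\Bigr).
\]
Since $\sum_i\sigma_i^4\ge\sigma_1^4=\lambda^4$, this is at most
\[
\tfrac12(m^2-\lambda^4)=\tfrac12(m-\lambda^2)(m+\lambda^2)\le m(m-\lambda^2),
\]
using $\lambda^2\le m$ for bipartite graphs. A $0/1$ minor is nonzero exactly when its four entries form a permutation pattern or contain exactly three ones, and then its square is $1$. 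Call these minors \emph{bad}. Hence the number of bad minors is at most $m(m-\lambda^2)$.

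\textbf{Step 2: charging edit pairs to bad minors.} Fix an edge $uv$ with $u\in P$, $v\in Q$, and put $X=N(v)\subseteq P$ and $Y=N(u)\subseteq Q$. Both sets are independent, so every pair in the symmetric difference is a cross pair $xy$ with $x\in P$, $y\in Q$. We check three cases.
\begin{itemize}
\item If $x\in X$, $y\in Y$ and $xy\notin E$, the minor on rows $\{u,x\}$ and columns $\{v,y\}$ has exactly three ones.
\item If $xy\in E$ and $y\in Y$ but $x\notin X$, the same minor again has three ones. The case $x\in X$, $y\notin Y$ is symmetric.
\item If $xy\in E$ with $x\notin X$ and $y\notin Y$, the minor is a permutation pattern, that is, an induced $2K_2$.
\end{itemize}
In every case we obtain a bad minor containing the edge $uv$. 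For fixed $uv$, distinct pairs $xy$ give distinct minors. Therefore
\[
\bigl|E(G)\triangle E(K_{X,Y})\bigr|\le b(uv),
\]
where $b(uv)$ is the number of bad minors whose support contains $uv$ as a $1$-entry.

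\textbf{Step 3: averaging.} Each bad minor has at most three $1$-entries, so $\sum_{uv\in E}b(uv)\le 3\,m(m-\lambda^2)$. Averaging over the $m$ edges, some edge $uv$ satisfies $b(uv)\le 3(m-\lambda^2)$. For that edge, $X=N(v)$ and $Y=N(u)$ give the stated bound, including the ``moreover'' part.

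The main point to get right is Step 1: identifying the count of nonzero $0/1$ minors with the Cauchy--Binet sum, and using $\sum_i\sigma_i^4\ge\lambda^4$ (rather than the reverse bound $\sum_i\sigma_i^4\le\lambda^2 m$) to obtain an upper bound that is linear in the deficit $m-\lambda^2$. The case check in Step 2 is routine but must cover both orientations of the three-ones pattern.
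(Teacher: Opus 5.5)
Your proof is correct and is essentially the paper's argument. Cauchy--Binet gives the bound $\mathcal D=\sum_{i<j}\sigma_i^2\sigma_j^2\le m\xi-\xi^2/2\le m(m-\lambda^2)$ for the sum $\mathcal D$ of squared $2\times2$ minors, where $\xi=m-\lambda^2$; your use of $\sum_i\sigma_i^4\ge\lambda^4$ is equivalent to the paper's $\sum_{2\le i<j}\sigma_i^2\sigma_j^2\le\xi^2/2$. Each pair of $E(G)\triangle E(K_{N(v),N(u)})$ is then charged to a nonzero minor having $uv$ as a $1$-entry, and one averages over the edges.

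The only difference is bookkeeping. You count (minor, $1$-entry) incidences directly, while the paper splits into $E(G)\setminus B_{uv}$ (exactly $2\mathcal D$ in total) and $B_{uv}\setminus E(G)$ (at most $\mathcal D$); both give the same total $3\mathcal D$. You should state explicitly that $x\neq u$ and $y\neq v$ in each of your cases, so that the minor really is $2\times2$; this is immediate.
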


\begin{proof}    
If $m=0$, take $X=Y=\varnothing$. Assume $m\ge1$.
Fix a bipartition $V(G)=U\sqcup W$, and let $M$ be the
corresponding biadjacency matrix: a $0$--$1$ matrix whose rows
are indexed by $U$ and whose columns are indexed by $W$, with
$M_{uw}=1$ if and only if $uw\in E(G)$. Then $\lambda$ is the
largest singular value of $M$. Write $r=\operatorname{rank} M$
and 
$\sigma_1=\lambda\ge\sigma_2\ge\cdots \ge \sigma_r > 0$
for the positive singular values of $M$. Since $M$ has exactly $m$ entries equal to $1$, we have 
\begin{equation*}
\sum_i\sigma_i^2=\lVert M\rVert_F^2=m.
\end{equation*}
Put
 $\xi :=m-\lambda^2=\sum_{i\ge2}\sigma_i^2$. 
In particular, we have $\xi\ge0$. 
Let
\begin{equation*}
\mathcal D:=
\sum_{\substack{I\subseteq U, |I|=2\\
J\subseteq W, |J|=2}}
\det(M[I,J])^2
\end{equation*}
be the sum of the squared $2\times2$ minors of $M$. By the
Cauchy--Binet formula applied to $MM^{\mathsf T}$, for every
$I\subseteq U$ with $|I|=2$, we have
$$ \det\bigl((MM^{\mathsf T})[I,I]\bigr)
=\sum_{J\subseteq W, |J|=2}\det(M[I,J])^2.$$
Summing over all such $I$ expresses $\mathcal D$ as the sum
of the principal $2\times2$ minors of $MM^{\mathsf T}$,
which is the second elementary symmetric function of its
eigenvalues. The nonzero eigenvalues of $MM^{\mathsf T}$
are $\sigma_1^2,\ldots,\sigma_r^2$, so
\begin{equation*}
\mathcal D=\sum_{i<j}\sigma_i^2\sigma_j^2.
\end{equation*}
By the definition of $\xi$, we obtain
\begin{align}
\mathcal D
=\lambda^2\xi+
  \sum_{2\le i<j}\sigma_i^2\sigma_j^2 
  \le \lambda^2\xi+\frac{\xi^2}{2} 
  =(m-\xi)\xi+\frac{\xi^2}{2} 
  \le m \,\xi.\label{eq:minor-defect-upper}
\end{align}
For an edge $uv\in E(G)$ with $u\in U$ and $v\in W$, we define
\[
B_{uv}:=N(v)\times N(u).
\]
Thus $B_{uv}$ is the edge set of the complete bipartite graph $K_{N_G(v),N_G(u)}$. Set
\[
D_{uv}:=\bigl|E(G)\, {\triangle}\, B_{uv}\bigr|.
\]
We now estimate the average of $D_{uv}$ over all edges $uv$. 

Because $M$ is a $(0,1)$-matrix, a $2\times 2$ minor of $M$ is nonzero precisely when one diagonal consists of two entries equal to $1$, whereas the product of the two entries on the other diagonal is $0$. Suppose that $uv,u'v'\in E(G)$ and that $u'v'\notin B_{uv}$. Then at least one of $uv'$ and $u'v$ is absent. 
In particular $u'\ne u$ and $v'\ne v$, because $u'=u$ would give $uv'=u'v'\in E(G)$ and $v'=v$ would give $u'v=u'v'\in E(G)$. Hence the submatrix on rows $u,u'$ and columns $v,v'$ has nonzero determinant. Conversely, every nonzero $2\times2$ minor has a unique diagonal consisting of two present edges, and each orientation of that diagonal gives one ordered pair $(uv,u'v')$ with $u'v'\notin B_{uv}$. Consequently, we obtain 
\begin{equation}\label{eq:edges-outside-rectangles}
\sum_{uv\in E(G)}|E(G)\setminus B_{uv}|=2\mathcal D.
\end{equation}

Next suppose that $u'v'\in B_{uv}\setminus E(G)$. By the definition of $B_{uv}$, we have 
\[
uv,\quad uv',\quad u'v\in E(G),
\qquad
u'v'\notin E(G).
\]
Since $uv'\in E(G)$ and $u'v'\notin E(G)$, we have $u'\ne u$; likewise $v'\ne v$. Hence the corresponding $2\times2$ submatrix of $M$ has exactly three entries equal to $1$ and has nonzero determinant. Conversely, each $2\times2$ submatrix with exactly three entries equal to $1$ determines exactly one pair consisting of the edge opposite the missing entry and that missing entry. Therefore
\begin{equation}\label{eq:missing-inside-rectangles}
\sum_{uv\in E(G)}|B_{uv}\setminus E(G)|\le\mathcal D.
\end{equation}
Combining \eqref{eq:edges-outside-rectangles} and \eqref{eq:missing-inside-rectangles}, we find that
\[
\sum_{uv\in E(G)}D_{uv}\le3\mathcal D.
\]
Averaging over the $m$ edges, some edge $uv\in E(G)$
satisfies $D_{uv}\le\frac{3\mathcal D}{m}$.
By \eqref{eq:minor-defect-upper},
we have $\mathcal{D} \le m\,\xi$ and  
$D_{uv}\le3\xi=3(m-\lambda^2)$.
Taking $X=N(v)$ and $Y=N(u)$, we obtain the desired complete bipartite graph $K_{X,Y}$.
\end{proof}

\begin{lemma}\label{lem:weighted-cut}
Let $H$ be a connected graph with $h\ge1$ edges,
spectral radius $\lambda$, and booksize $\beta$. If
$\lambda>3\beta$, then $h\ge\lambda^2$ and there exists
a vertex $v\in V(H)$ such that
\begin{equation}\label{eq:weighted-cut-bound}
\begin{aligned}
e\bigl(H[N(v)]\bigr)
+e\bigl(H[V(H)\setminus N(v)]\bigr)
 \le
(h-\lambda^2)
\left(1+\frac{2\beta}{\lambda-3\beta}\right).
\end{aligned}
\end{equation}
\end{lemma}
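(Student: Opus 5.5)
Apply the Section~\ref{sec:weighted} machinery to $H$ with its Perron vector $\bm{x}$, $\|\bm{x}\|_1=1$, and $T,S$ as there. Put $\delta:=\lambda^2-h$, so \eqref{eq:TS} reads $T-S=\delta$.

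First, I show $h\ge\lambda^2$. Suppose instead $\delta>0$. Then $H$ is not complete bipartite, since a connected $K_{p,q}$ has $\lambda^2=h$. Lemma~\ref{lem:triangle} then gives $T>0$. Substituting $S=T-\delta$ into \eqref{eq:rhoT} yields \eqref{eq:first-chain}, $(3\beta-\lambda)T\ge\beta\delta\ge0$. This forces $3\beta\ge\lambda$, contradicting $\lambda>3\beta$. Hence $\xi:=h-\lambda^2\ge0$ and $S=T+\xi$.

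Second, I bound $T$ and $S$ in terms of $\xi$. Rewrite \eqref{eq:rhoT} as $\lambda T\le\beta(2T+S)=\beta(3T+\xi)$, so $(\lambda-3\beta)T\le\beta\xi$. This gives
\[
T\le\frac{\beta\xi}{\lambda-3\beta},\qquad S=T+\xi\le\xi\Bigl(1+\frac{\beta}{\lambda-3\beta}\Bigr),
\]
and therefore
\[
\sum_v x_v(t_v+s_v)=T+S\le\xi\Bigl(1+\frac{2\beta}{\lambda-3\beta}\Bigr).
\]

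Third, I pass from the weighted sum to a single vertex. Because $\sum_v x_v=1$ and $x_v>0$, the quantity $T+S$ is a convex combination of the numbers $t_v+s_v$. So some vertex $v$ has $t_v+s_v\le T+S$.

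It remains to compare this with the left-hand side of \eqref{eq:weighted-cut-bound}. Here $t_v=e(H[N(v)])$ matches the first term exactly. However, $s_v$ counts only edges inside $V(H)\setminus N[v]$, whereas the target counts edges inside $V(H)\setminus N(v)$, which contains $v$. These extra edges would be edges from $v$ to $V(H)\setminus N(v)$, and there are none, because every neighbour of $v$ lies in $N(v)$. So $e(H[V(H)\setminus N(v)])=s_v$, and the chosen $v$ satisfies \eqref{eq:weighted-cut-bound}.

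The only delicate point is the first step. It needs Lemma~\ref{lem:triangle}, hence the exclusion of complete bipartite $H$ when $\lambda^2>h$; the rest is bookkeeping.
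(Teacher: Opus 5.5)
Your proof is correct and follows the paper's argument essentially step for step: substitute $S=T+(h-\lambda^2)$ from \eqref{eq:TS} into \eqref{eq:rhoT}, bound $T+S=(h-\lambda^2)+2T$, pick a vertex by averaging over the Perron weights, and note that $e(H[V(H)\setminus N(v)])=s_v$. The only difference is how you get $h\ge\lambda^2$: you use Lemma~\ref{lem:triangle}, whereas the paper reads it off $(\lambda-3\beta)T\le\beta(h-\lambda^2)$ by splitting into $\beta>0$ and $\beta=0$ (in the latter case $T=0$ forces $S=h-\lambda^2\ge0$), so that lemma is not actually needed.
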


\begin{proof}
Apply the notation of Subsection \ref{sec:three-ineq}  to $H$: let $\bm{x}$ be a Perron vector of $H$ with
$\|\bm{x}\|_1=1$, and let $t_v$, $s_v$, $T$, $S$ be the associated quantities, so that \eqref{eq:rhoT} reads $\lambda T\le\beta(2T+S)$. Note that no edge incident
with $v$ has both ends outside $N(v)$, so
\begin{equation}\label{eq:sv-open-closed}
e\bigl(H[V(H)\setminus N(v)]\bigr)
=e\bigl(H[V(H)\setminus N_H[v]]\bigr)=s_v ,
\end{equation}
and the left-hand side of
\eqref{eq:weighted-cut-bound} is exactly $t_v+s_v$.

Put $\theta:=\lambda^2-h$. Substituting $S=T-\theta$
from \eqref{eq:TS} into \eqref{eq:rhoT} gives
$\lambda T\le\beta(3T-\theta)$, that is,
\begin{equation}\label{eq:weighted-cut-T}
(\lambda-3\beta)\,T\ \le\ \beta\,(h-\lambda^2).
\end{equation}

We first check that $h\ge\lambda^2$. If $\beta>0$, this
follows from \eqref{eq:weighted-cut-T}, because
$\lambda-3\beta>0$ and $T\ge0$ force the right-hand
side to be nonnegative. If $\beta=0$, then $H$ is
triangle-free, so $t_v=0$ for every $v$ and $T=0$;
now \eqref{eq:TS} gives $S=h-\lambda^2$, which is
nonnegative because every $s_v$ is. In either case
$h\ge\lambda^2$, and \eqref{eq:weighted-cut-T} yields
\begin{equation}\label{eq:weighted-cut-T-upper}
T\ \le\ \frac{\beta(h-\lambda^2)}{\lambda-3\beta}. 
\end{equation}
By the definitions of $T$ and $S$ together with
\eqref{eq:sv-open-closed},
\[
\sum_{v\in V(H)}x_v\bigl(t_v+s_v\bigr)=T+S .
\]
Using $S=T-\theta=T+(h-\lambda^2)$ and
\eqref{eq:weighted-cut-T-upper},
\[
T+S=(h-\lambda^2)+2T
\le(h-\lambda^2)
\left(1+\frac{2\beta}{\lambda-3\beta}\right).
\]
Since $x_v>0$ for every $v$ and $\sum_v x_v=1$, the
left-hand side is a weighted average of the quantities
$t_v+s_v$, so at least one vertex $v$ satisfies
\eqref{eq:weighted-cut-bound}.
\end{proof}

Now, we are ready to prove Theorem \ref{thm:edge-spectral-stability}. 

\begin{proof}[{\bf Proof of Theorem \ref{thm:edge-spectral-stability}}]
Put $\gamma=\min \{\varepsilon,\frac16 \},
\delta=\frac{\gamma^{3}}{1000}$ and  
$\eta
=\frac{\gamma^{2}}{1000}$.
 Write $\lambda=\lambda(G)$ and $\beta=\bk(G)$, and assume
that $\lambda\ge(1-\delta)\sqrt m$. Suppose that the first
alternative fails, so that
$\beta\le(\tfrac13-\varepsilon)\sqrt m
\le(\tfrac13-\gamma)\sqrt m$; we show that the second
alternative holds.
Choose a connected component $H$ of $G$ with
$\lambda(H)=\lambda$, and put $h=e(H)$ and
$\beta_H=\bk(H)$. 
As $\beta_H\le\beta$, the two hypotheses  yield
\[
\lambda-3\beta_H\ \ge\ \lambda-3\beta
\ \ge\ (1-\delta)\sqrt m-(1-3\gamma)\sqrt m
\ =\ (3\gamma-\delta)\sqrt m
\ >\ 2\gamma\sqrt m ,
\]
where the last step holds because
$\delta=\gamma^{3}/1000<\gamma$. In
particular $\lambda>3\beta_H$, so Lemma
\ref{lem:weighted-cut} applies to $H$ and gives
$h\ge\lambda^{2}$. Combined with $h\le m$,  this yields
$\lambda\le\sqrt m$ and
\[
0\ \le\ m-\lambda^{2}
\ \le\ \bigl(1-(1-\delta)^{2}\bigr)m
\ <\ 2\delta m .
\]
Moreover, 
Lemma \ref{lem:weighted-cut} also provides a vertex
$v\in V(H)$ with
\[
q_H(v):=e\bigl(H[N_H(v)]\bigr)
+e\bigl(H[V(H)\setminus N_H(v)]\bigr)
\ \le\ (h-\lambda^{2})
\Bigl(1+\frac{2\beta_H}{\lambda-3\beta_H}\Bigr).
\]
Set $A_0=N_H(v)=N_G(v)$ and $B_0=V(G)\setminus A_0$,
and let $F=G[A_0,B_0]$ be the spanning bipartite
subgraph formed by the edges of $G$ crossing the cut
$A_0\sqcup B_0$. The edges of $G$ missing from $F$ are
those inside $A_0$ and those inside $B_0$. Since
$A_0\subseteq V(H)$, the former are the
$e\bigl(H[N_H(v)]\bigr)$ edges inside $N_H(v)$. Since
$V(G)\setminus V(H)\subseteq B_0$ and $G$ has no edge
joining $V(H)$ to $V(G)\setminus V(H)$, the latter
consist of the $e\bigl(H[V(H)\setminus N_H(v)]\bigr)$
edges inside $B_0\cap V(H)$ together with the $m-h$
edges outside $H$. Hence
\[
q:=\bigl|E(G)\setminus E(F)\bigr|=(m-h)+q_H(v).
\]

We now bound $q$. From $\lambda-3\beta>0$ we get
$\beta<\lambda/3$, and $t\mapsto 2t/(\lambda-3t)$ is
increasing on $[0,\lambda/3)$; so, using
$\beta_H\le\beta<\frac13\sqrt m$ and
$\lambda-3\beta>2\gamma\sqrt m$,
\[
1+\frac{2\beta_H}{\lambda-3\beta_H}
\ \le\ 1+\frac{2\beta}{\lambda-3\beta}
\ <\ 1+\frac{\tfrac23\sqrt m}{2\gamma\sqrt m}
\ =\ 1+\frac{1}{3\gamma}
\ \le\ \frac{1}{2\gamma},
\]
the final inequality being equivalent to
$6\gamma+2\le3$, which holds because
$\gamma\le\frac16$. This factor is at least
$1$, and $(m-h)+(h-\lambda^{2})=m-\lambda^{2}$, so
\[
q\ \le\ (m-h)+(h-\lambda^{2})
\Bigl(1+\frac{2\beta_H}{\lambda-3\beta_H}\Bigr)
\ \le\ (m-\lambda^{2})
\Bigl(1+\frac{2\beta_H}{\lambda-3\beta_H}\Bigr)
\ <\ 2\delta m\cdot\frac{1}{2\gamma}
\ =\ \eta m .
\]

Put $\ell=e(F)=m-q$ and $\lambda_F=\lambda(F)$. As
$\eta<1$, we have $q<m$, so $\ell\ge1$ and $F$ has an edge. 
The matrix $A(G)-A(F)$ is symmetric with exactly $2q$ nonzero entries, each equal to $1$, so $\lVert A(G)-A(F)\rVert_F=\sqrt{2q}$. Weyl's inequality and
$\lVert\cdot\rVert_2\le\lVert\cdot\rVert_F$  give
\[
\lambda_F\ \ge\ \lambda-\lVert A(G)-A(F)\rVert_2
\ \ge\ \lambda-\sqrt{2q}
\ >\ \bigl(1-\delta-\sqrt{2\eta}\bigr)\sqrt m ,
\]
the last step by $q<\eta m$. Moreover
$\gamma\le\frac16$ gives
$\delta+\sqrt{2\eta}
 < 1 $, 
so $\lambda-\sqrt{2q}>0$ and the bound
$\lambda_F\ge\lambda-\sqrt{2q}$ may be squared. Using
$\lambda\le\sqrt m$, then $m-\lambda^{2}<2\delta m$ and
$q<\eta m$,
\[
\ell-\lambda_F^{2}
\ \le\ (m-q)-\bigl(\lambda-\sqrt{2q}\bigr)^{2}
\ \le\ (m-\lambda^{2})+2\sqrt{2mq}
\ <\ \bigl(2\delta+2\sqrt{2\eta}\bigr)m .
\]

Applying Lemma \ref{lem:bipartite-completion} to the
bipartite graph obtained from $F$ by deleting its isolated
vertices, which has $\ell$ edges and spectral radius
$\lambda_F$, we obtain disjoint sets $A,B\subseteq V(G)$ with
\[
\bigl|E(F)\,\triangle\,E(K_{A,B})\bigr|
\ \le\ 3\bigl(\ell-\lambda_F^{2}\bigr)
\ <\ \bigl(6\delta+6\sqrt{2\eta}\bigr)m .
\]
Since $E(G)\,\triangle\,E(K_{A,B})$ is contained in
$\bigl(E(G)\,\triangle\,E(F)\bigr)\cup
\bigl(E(F)\,\triangle\,E(K_{A,B})\bigr)$ and
$|E(G)\,\triangle\,E(F)|=q<\eta m$, we get
\[
\bigl|E(G)\,\triangle\,E(K_{A,B})\bigr|
<\bigl(\eta+6\delta+6\sqrt{2\eta}\bigr)m .
\]
Finally, $\gamma\le\frac16$ gives
$\eta+6\delta=\frac{\gamma^{2}}{1000}+\frac{6\gamma^{3}}{1000}
<\frac{\gamma}{1000}$ and
$6\sqrt{2\eta}=6\gamma\sqrt{\tfrac{2}{1000}}<\frac{3\gamma}{10}$,
so the right-hand side is less than
$\gamma m\le\varepsilon m$. This is the edit-distance
alternative, and the proof is complete.
\end{proof}

\section{Concluding remarks}

\label{sec:conclusion}

This paper determines sharp bounds in the spectral book problem.  
In the vertex-spectral setting, we proved
that $\bk(G)\ge\frac13\lambda(G)$ whenever
$\lambda(G)\ge\lambda(T_{n,2})$ and $G\neq T_{n,2}$ (Theorem \ref{thm-confirm-Zhai-Lin}), 
which solves Problem
\ref{prob-ZL} of Zhai and Lin \cite{ZhaiLin2023} in a stronger form.   
The extremal graphs of Theorem \ref{thm-confirm-Zhai-Lin} are
exactly the $\frac n2$-regular graphs in which every edge lies in $0$ or $\frac n6$ triangles (Proposition \ref{prop:equality}), and there are infinitely many extremal graphs. 
We refined the Edwards bound to
$\bk(G)\ge\max\{\lambda-\frac n3,\,2\lambda-n\}$ (Theorem \ref{thm:fixed-order-intro}). 
In the edge-spectral setting,  every $m$-edge  Nosal graph satisfies $\bk(G)\ge\lambda-\frac{2m}{3\lambda}$ (Theorem \ref{thm:edge-spectral-intro}), 
which is attained  by infinitely many extremal graphs.  The companion bound $\bk(G)\ge 2\lambda-\frac{2m}{\lambda}$ is attained by every regular complete multipartite graph (Theorem
\ref{thm:two-lambda}). 
As an application, we improved the  Bollob\'as--Nikiforov bound to $t(G)\ge\frac13(\lambda+1)(\lambda^{2}-m)$ (Theorem \ref{thm:BN-refined}). 
Finally, we established stability results at both thresholds: a graph whose spectral radius is close to either threshold either contains a book of almost extremal size, or lies within
a small edit distance of the corresponding complete bipartite graph (Theorems \ref{thm:edit-distance} and \ref{thm:edge-spectral-stability}). 

All the results come from a single mechanism. 
For a connected graph $G$ with Perron vector $\bm{x}$, the two Perron-weighted parameters  $T=\sum_{v}x_{v}t_{v}$ and $S=\sum_{v}x_{v}s_{v}$ obey one identity and two inequalities (see Lemmas \ref{lem:degree-neighborhood},
\ref{lem:charging} and \ref{lem:T-upper}), and the booksize enters only through the pointwise bound
$t_{uv}\le\bk(G)$. Eliminating $T$ and $S$ between these relations produces the sharp constants in both the vertex-spectral and the edge-spectral settings. In addition, the same
relations then yield the refined Bollob\'as--Nikiforov
inequality of Theorem \ref{thm:BN-refined} and the two stability theorems.

We close with two topics beyond the $n/6$ threshold: a
spectral form of the Conlon--Fox--Sudakov problem, which we
propose as an open question, and a spectral analogue of
Khad\v{z}iivanov's theorem at the Tur\'an threshold for larger
cliques, which we prove in Theorem \ref{thm:main-3}.

\subsection{A spectral
Conlon--Fox--Sudakov problem}

Mubayi \cite{Mubayi2012} initiated the study of the
trade-off between $t(G)$ and $\bk(G)$ in $n$-vertex
graphs with more than $\lfloor n^{2}/4\rfloor$ edges.
It was shown in \cite{Mubayi2012} that 
for every $n$-vertex graph $G$ with $e(G)=\lfloor n^2/4 \rfloor +1$, if $\frac{1}{4} < \beta < \frac{1}{2}$  and 
$\bk(G)< \beta n$, 
then $G$ has at least $\frac{1}{2}\beta (1-2\beta) n^2 - o(n^2)$ triangles;  
if $ \frac{1}{6} < \beta < \frac{1}{4}$ and $\bk(G)< \beta n$,  then $G$ has at least $\gamma n^3$ triangles for some $\gamma >0$. 
For the latter case, Conlon, Fox and Sudakov \cite{CFS2020}
conjectured that if $n/6\le b<n/4$, and $G$ is an $n$-vertex graph with $e(G)\ge \lfloor n^{2}/4\rfloor$, $G\neq T_{n,2}$, and $\bk (G)\le b$, then $G$ contains at
least $b^{2}(n-4b)$ triangles. They proved this for
$b=n/6$ and for $0.2495\,n\le b<n/4$; the
Bollob\'as--Nikiforov inequality \eqref{eq-BN} is one of
the tools in the first case. The conjectured extremal graph
$S_{b,n}$ is the blow-up of the triangular prism, 
where four of the six parts, corresponding to the vertices of two edges of the matching, are of size $b$, and the remaining two parts are of size $\lfloor(n-4 b) / 2\rfloor$ and $\lceil(n-4 b) / 2\rceil$; for $b=n/6$ it is the balanced blow-up $Y_{k,k}$ of Example \ref{ex:prism}.
Recently, Chen, Ma and Wang \cite{ChenMaWang2026} confirmed
the conjecture for all $n/6\le b\le(\frac16+\varepsilon)n$ and some absolute constant $\varepsilon>0$, 
with $S_{b,n}$ as the unique extremal
graph. It is natural to
ask for a spectral counterpart. Note that Theorem
\ref{thm-confirm-Zhai-Lin} yields
$\bk(G)\ge\frac13\lambda(T_{n,2})\ge\frac{n-1}{6}$ under
the spectral hypothesis, so the assumption $b\ge n/6$
below is essentially no restriction.

\begin{problem}\label{prob:spectral-CFS}
Let $n/6\le b<n/4$ be an integer, and let $G$ be an $n$-vertex graph
with $\lambda(G)\ge\lambda(T_{n,2})$, $G\neq T_{n,2}$
and $\bk(G)\le b$. Is it true that
$t(G)\ge(1-o(1))\,b^{2}(n-4b)$?
\end{problem}

\subsection{A spectral Khad\v{z}iivanov theorem}

The Tur\'{a}n theorem states that if $e(G) > e(T_{n,r})$, then $G$ contains a copy of $K_{r+1}$, so $\bk(G)\ge r-1$.  
Khad\v{z}iivanov \cite[Theorem 2]{Khadziivanov1991} proved a much stronger bound at the Tur\'an threshold: for every $r\ge 2$, if $e(G) \ge \frac{r-1}{2r}n^2$, then $\bk (G) \ge \frac{r-2}{r}n$, with equality if and only if $r\mid n$ and $G=T_{n,r}$. 
We prove a spectral form of this result. We first record an elementary lower bound
on $\lambda(T_{n,r})$. Write $n=qr+s$ with $0\le s<r$. The part sizes of $T_{n,r}$ are $q+1$, repeated $s$ times, and
$q$, repeated $r-s$ times. Consequently,
$
2 e(T_{n,r}) = n^2 - \left(s(q+1)^2+(r-s)q^2\right) = \frac{r-1}{r}n^2-\frac{s(r-s)}r$. 
It follows that
\[
\lambda(T_{n,r}) \geq \frac{2e(T_{n,r})}{n} = \frac{r-1}{r}n - \frac{s(r-s)}{rn}.
\]
Since $s(r-s)\leq r^2/4$, it follows that
\begin{equation}\label{eq:lambda-Turan}
\lambda(T_{n,r}) \geq \frac{r-1}{r} n - \frac{r}{4n}.
\end{equation}

\begin{theorem}\label{thm:main-3}
Let $r\geq 2$ and $G$ be a graph of order $n$. If
$n > \frac{r^2}{2\cdot \gcd(2,r)}$ and $\lambda(G)\geq\lambda(T_{n,r})$, then
$$\bk(G)\geq\frac{r-2}{r}\,n.$$
\end{theorem}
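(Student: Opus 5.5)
The plan is to reduce the statement to the fixed-order bound $\bk(G)\ge 2\lambda(G)-n$ of Theorem~\ref{thm:fixed-order-intro}, and then absorb the small loss in \eqref{eq:lambda-Turan} using integrality. That bound follows from Theorem~\ref{thm:two-lambda} together with $\lambda\ge 2m/n$, and its alternative proof via $\lambda(G)\le\max_{uv\in E(G)}\tfrac12(d(u)+d(v))\le\tfrac12(n+\bk(G))$ needs no spectral hypothesis and no exclusion of $T_{n,2}$. So it holds for every graph with at least one edge.

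The case $r=2$ is trivial, since the conclusion reads $\bk(G)\ge0$. For $r\ge3$, first note that $\lambda(G)\ge\lambda(T_{n,r})>0$, so $G$ has an edge and the bound applies. Combining it with \eqref{eq:lambda-Turan} gives
\[
\bk(G)\ \ge\ 2\lambda(T_{n,r})-n\ \ge\ \frac{r-2}{r}\,n-\frac{r}{2n}.
\]
It remains to remove the error term $\frac{r}{2n}$ using the fact that $\bk(G)$ is an integer.

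Write $\frac{(r-2)n}{r}=N-\frac jr$ with $N$ an integer and $j\in\{0,1,\dots,r-1\}$. Since $(r-2)n\equiv -2n\pmod r$, the integer $j$ is the least nonnegative residue of $2n$ modulo $r$. It suffices to show that $\frac{r}{2n}<\frac jr$ when $j\ge1$, and that $\frac{r}{2n}<1$ when $j=0$. In either case the displayed bound then gives $\bk(G)>N-1$ (respectively $\bk(G)>N-\tfrac jr-\tfrac jr$ lies strictly above $N-1$), hence $\bk(G)\ge N\ge\frac{(r-2)n}{r}$.

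\begin{itemize}
\item If $j=0$, the condition $\frac{r}{2n}<1$ means $n>r/2$. This follows from $n>\frac{r^2}{4}\ge\frac r2$, because $r\ge2$.
\item If $j\ge1$ and $r$ is odd, we only know $j\ge1$, so we need $n>\frac{r^2}{2}$. This is exactly the hypothesis with $\gcd(2,r)=1$.
\item If $j\ge1$ and $r$ is even, then $j$, being a residue of $2n$ modulo an even number, is even, so $j\ge2$. We then need $n>\frac{r^2}{4}$, which is the hypothesis with $\gcd(2,r)=2$.
\end{itemize}

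The main obstacle is this integrality bookkeeping: matching the loss $\frac{r}{2n}$ from \eqref{eq:lambda-Turan} against the fractional part of $\frac{(r-2)n}{r}$, and checking the parity of $j$. This is where the hypothesis $n>\frac{r^2}{2\gcd(2,r)}$ is used. One could sharpen the argument by using the exact value $\frac{s(r-s)}{rn}$ of the loss instead of $\frac{r}{2n}$. For instance, when $s=0$ there is no loss at all. The crude bound already suffices, though.
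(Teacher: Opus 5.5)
Your route is the paper's. You use $\bk(G)\ge 2\lambda(G)-n$ (Theorem~\ref{thm:two-lambda} with $\lambda\ge 2m/n$), then \eqref{eq:lambda-Turan}, then an integrality argument in which $\gcd(2,r)$ enters through the fractional part of $\frac{(r-2)n}{r}$. The paper argues by contradiction and you argue directly, which makes no real difference. However, your integrality step is wrong as written. With $\frac{(r-2)n}{r}=N-\frac jr$, you need the displayed lower bound $N-\frac jr-\frac{r}{2n}$ to exceed $N-1$, that is, $\frac{r}{2n}<1-\frac jr=\frac{r-j}{r}$. The quantity $\frac{r-j}{r}$ is the fractional part of $\frac{(r-2)n}{r}$. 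The quantity $\frac jr$ is only the distance up to $N$, and it is not what must beat the loss. Your condition $\frac{r}{2n}<\frac jr$ gives only $\bk(G)>N-\frac{2j}{r}$, and this forces $\bk(G)\ge N$ only when $j\le r/2$. So your parenthetical claim fails for $j>r/2$. For instance, take $r=3$ and $n=4$. Then $j=2$ and $\frac{r}{2n}=\frac38<\frac23$, yet the displayed bound gives only $\bk(G)\ge\frac43-\frac38=\frac{23}{24}$, i.e.\ $\bk(G)\ge 1$, not $\bk(G)\ge N=2$. This $n$ lies below the threshold, so the theorem is not contradicted, but your ``it suffices'' claim is. Inside the range the reasoning still breaks: for $r=3$, $n=7$ you have $j=2$ and $N-\frac{2j}{r}<N-1$. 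The conclusion $\bk(G)\ge 3$ holds there only because $\frac{3}{14}<\frac13=\frac{r-j}{r}$.

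The repair is to replace $j$ by $r-j$ throughout the case $j\ge1$. Then $r-j\in\{1,\dots,r-1\}$ and $r-j\equiv-2n\pmod r$, so $r-j$ is even when $r$ is even. Hence $r-j\ge\gcd(2,r)$, and the hypothesis $n>\frac{r^2}{2\gcd(2,r)}$ is exactly $\frac{r}{2n}<\frac{\gcd(2,r)}{r}\le\frac{r-j}{r}$. Your parity case analysis then goes through verbatim. The threshold comes out the same only because $j$ and $r-j$ share the minimum value $\gcd(2,r)$. This is how the paper phrases it: $\frac{(r-2)n}{r}$ has denominator dividing $r/\gcd(2,r)$, so when it is not an integer its fractional part is at least $\gcd(2,r)/r$.
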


\begin{proof}
Suppose on the contrary that $\bk(G) < \frac{r-2}{r}\,n$.
Put $g:=\gcd(2,r)$ and $x:=\frac{r-2}{r}\,n$.
The rational number $x$ has denominator dividing
\[
\frac{r}{\gcd(r-2,r)} = \frac{r}{\gcd(2,r)} = \frac{r}{g}.
\]
Therefore, if $x$ is not an integer, its fractional part is at
least $g/r$. If $x$ is an integer, every integer strictly smaller
than $x$ is at most $x-1$, and $1\ge g/r$. Since $\bk(G)$ is an
integer, the assumption $\bk(G)<\frac{r-2}{r}\,n$ therefore gives
\[
  \bk(G)\leq\frac{r-2}{r}\,n - \frac{g}{r}.
\]
Since $\lambda(G)\ge\lambda(T_{n,r})>0$, we have $m\ge1$,
and $\lambda(G)\ge\frac{2m}{n}$. Theorem \ref{thm:two-lambda},
applied to $G$ with its isolated vertices deleted, gives
$\bk(G)\ge2\lambda(G)-\frac{2m}{\lambda(G)}\ge2\lambda(G)-n$.
It follows that 
\begin{equation}\label{eq:temp}
\lambda(G)\leq\frac{n+\bk(G)}2 \leq
\frac{1}{2} \left(n+\frac{r-2}{r}n-\frac{g}{r}\right)
=\frac{r-1}{r}n-\frac{g}{2r}.
\end{equation}
On the other hand, \eqref{eq:lambda-Turan} and $\frac{r}{4n}<\frac{g}{2r}$ yield
\begin{equation}\label{eq:lambda-Turan-lower}
\lambda(T_{n,r})\geq\frac{r-1}{r}n - \frac{r}{4n}
> \frac{r-1}{r}n - \frac{g}{2r}.
\end{equation}
Combining \eqref{eq:temp} and \eqref{eq:lambda-Turan-lower} yields
$\lambda(G)<\lambda(T_{n,r})$, a contradiction.
\end{proof}

\begin{remark}
For $r\mid n$ the bound of Theorem \ref{thm:main-3} is
attained by $T_{n,r}$, which is regular of degree
$\lambda(T_{n,r})=\frac{r-1}{r}n$ and satisfies
$\bk(T_{n,r})=\frac{r-2}{r}n$.
\end{remark}

\section*{Acknowledgments}

After the second and third authors had submitted their paper for publication, they learned that the first author had independently solved Problem \ref{prob-ZL}. We therefore decided to combine our results and add several further results. 
The authors would like to thank Prof. Ping Hu for valuable suggestions. 
At an exploratory stage the authors used language-model-based tools for brainstorming; all arguments and proofs in this paper were developed, written and verified by the authors.

\appendix

\section{Alternative proof of 
Bollob\'as--Nikiforov's inequality}

\label{sec:App}

In this section, we give an alternative proof of
\eqref{eq-BN}. The inequality appears as Theorem 1 in
Khad\v{z}iivanov \cite{Khadziivanov1991}, where it comes with
a characterization of equality, and it follows from
\cite[Theorem 1]{BN2005}. The proof below is shorter than both
and isolates the source of the constant $1/3$.
Throughout, $G$ is a graph with $n$
vertices and $m$ edges, $k_3:=t(G)$ is the number of
triangles of $G$, and $Q=\sum_{v\in V(G)}d(v)^2$. For an
edge $uv$ we write $c_{uv}:=|N(u)\cap N(v)|$ for the
number of triangles containing $uv$, so that
$c_{uv}\le\bk(G)$, and for a vertex $u$ we put
$\overline N(u):=V(G)\setminus N(u)$. For an edge $uv$ of
$G$, we write 
\[
  S_{uv}:=\bigl(N(u)\cap N(v)\bigr)
        \cup\bigl(\overline N(u)\cap\overline N(v)\bigr).
\]
So $S_{uv}$ is the set of vertices that are adjacent
to both ends of $uv$ or to neither of them; the
vertices outside $S_{uv}$ are those adjacent to
exactly one end. Since the two sets in the union are
disjoint,
\begin{equation}\label{eq:edw-Suv}
  |S_{uv}|=c_{uv}
    +\bigl|\overline N(u)\cap\overline N(v)\bigr|
   =n-d(u)-d(v)+2c_{uv},
\end{equation}
where the second equality is inclusion and
exclusion:
$$\bigl|\overline N(u)\cap\overline N(v)\bigr|
 =n-|N(u)\cup N(v)|=n-d(u)-d(v)+c_{uv}.$$

The next lemma is the heart of the proof and the source
of the constant $1/3$.

\begin{lemma}\label{lem:edw-pigeonhole}
Let $T$ be a triangle of $G$ and let $w$ be any
vertex of $G$. Then $w\in S_e$ for at least one of
the three edges $e$ of $T$. Consequently
\begin{equation}\label{eq:edw-local}
  \sum_{e\in E(T)}|S_e|\ \ge\ n .
\end{equation}
\end{lemma}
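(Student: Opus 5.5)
The plan is a case analysis on how many vertices of $T$ are adjacent to $w$. Write $T=uvz$ with edges $uv$, $uz$, $vz$. Recall that $w\in S_e$ for $e=xy$ exactly when $w$ is adjacent to both $x$ and $y$ or to neither. For each $w$, let $k$ be the number of neighbours of $w$ among $u,v,z$, and count the edges of $T$ whose two endpoints are treated alike by $w$ (both neighbours or both non-neighbours).

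First I will dispose of the case $w\in V(T)$, say $w=u$. Then $u$ is adjacent to $v$ and $z$, so $u\in N(v)\cap N(z)\subseteq S_{vz}$. Next, for $w\notin V(T)$, the neighbours of $w$ in $T$ form a set of size $k$, and the non-neighbours form a set of size $3-k$. An edge of $T$ lies in the first set in $\binom{k}{2}$ cases and in the second in $\binom{3-k}{2}$ cases. The number of edges $e$ of $T$ with $w\in S_e$ is therefore $\binom k2+\binom{3-k}2$, which equals $3,1,1,3$ for $k=0,1,2,3$. This is the pigeonhole observation: among three vertices, two must receive the same label (neighbour or non-neighbour), and $T$, being a triangle, contains an edge joining them. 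So every vertex lies in at least one $S_e$.

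For \eqref{eq:edw-local}, I will double count the pairs $(w,e)$ with $e\in E(T)$ and $w\in S_e$:
\[
\sum_{e\in E(T)}|S_e|=\sum_{w\in V(G)}\bigl|\{e\in E(T): w\in S_e\}\bigr|\ \ge\ n,
\]
since each summand on the right is at least $1$.

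There is no real obstacle here. The only point needing care is that a vertex $w$ of $T$ itself is not adjacent to itself, so it is never in the "neither" part for an edge it lies on, and it must be handled separately as above. It lands in $S$ of the opposite edge, which is exactly the mechanism used in Lemma~\ref{lem:count}.
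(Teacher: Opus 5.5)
Your proof is correct and matches the paper's. The paper also notes that two of the three vertices of $T$ lie on the same side of the partition $N(w)\sqcup\overline N(w)$, so $w\in S_e$ for the edge $e$ joining them, and then sums over $w$. Your separate case $w\in V(T)$ is harmless but unnecessary: since $w\in\overline N(w)$, the same pigeonhole argument, and your count $\binom k2+\binom{3-k}2$, apply there unchanged.
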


\begin{proof}
Write $T=xyz$. The vertex $w$ splits $V(G)$ into the
two sets $N(w)$ and $\overline N(w)$. The three
vertices $x$, $y$, $z$ lie in these two sets, so by
the pigeonhole principle two of them lie in the same
set; say they are $x$ and $y$. If both lie in
$N(w)$, then $w$ is adjacent to both $x$ and $y$, so
$w\in N(x)\cap N(y)\subseteq S_{xy}$. If both lie in
$\overline N(w)$, then $w$ is adjacent to neither,
so $w\in\overline N(x)\cap\overline N(y)
\subseteq S_{xy}$. In both cases $w\in S_{xy}$, and
$xy$ is an edge of $T$. 
For \eqref{eq:edw-local}, sum over all $w\in V(G)$. Each
of the $n$ vertices is counted at least once on the
left, so $\sum_{e\in E(T)}|S_e|\ge n$.
\end{proof}

\begin{theorem}\label{thm:edw-master-ineq}
For every graph $G$ with $n$ vertices and $m$ edges,
\begin{equation*} 
  \bk (G) \cdot (Q - mn) \le  \big(6\bk (G) - n\big) \cdot t(G). 
\end{equation*}
\end{theorem}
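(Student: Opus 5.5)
Sum Lemma \ref{lem:edw-pigeonhole} over all triangles and then evaluate the left-hand side edge by edge. Write $\beta=\bk(G)$ and $k_3=t(G)$. Summing \eqref{eq:edw-local} over all triangles $T$ gives
\[
nk_3\ \le\ \sum_{T}\sum_{e\in E(T)}|S_e|=\sum_{uv\in E(G)}c_{uv}\,|S_{uv}|,
\]
since each edge $uv$ lies in exactly $c_{uv}$ triangles.

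Next substitute \eqref{eq:edw-Suv}, which gives $c_{uv}|S_{uv}|=c_{uv}(n-d(u)-d(v))+2c_{uv}^2$. Bound the quadratic term by $2c_{uv}^2\le 2\beta c_{uv}$. Using $\sum_{uv}c_{uv}=3k_3$, we obtain
\[
nk_3\ \le\ \sum_{uv\in E(G)}c_{uv}\bigl(n-d(u)-d(v)\bigr)+6\beta k_3 .
\]

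The main obstacle is the remaining linear term $\sum_{uv}c_{uv}(n-d(u)-d(v))$. Its sign is not fixed, so I cannot simply bound $c_{uv}$ by $\beta$. I would therefore rewrite it through the vertex-local identity of Lemma \ref{lem:degree-neighborhood}. Summing $c_{uv}$ over edges at $u$ gives $2t_u$, so $\sum_{uv}c_{uv}(d(u)+d(v))=2\sum_u d(u)t_u$. Separately, $\sum_u\sum_{v\in N(u)}c_{uv}(\cdot)$ can be reorganized via $\sum_{u\in N(v)}d(u)=m+t_v-s_v$.

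I do not expect the exact rewriting to close by itself. The plan is instead to multiply the target inequality through differently: run the whole argument with weights $w_{uv}=\beta-c_{uv}\ge0$ rather than with $c_{uv}$. The bound $2c^2\le2\beta c$ is exactly the statement $c(\beta-c)\ge0$, and the nonnegative slack $\sum_{uv}(\beta-c_{uv})(\,\cdot\,)$ should be paired with the pigeonhole count applied to non-triangle quadruples.

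Concretely, I would also sum $|S_{uv}|$ over all edges with weight $\beta$. By \eqref{eq:edw-Suv},
\[
\sum_{uv}|S_{uv}|=mn-Q+6k_3 ,
\]
using $\sum_{uv}(d(u)+d(v))=Q$ and $\sum_{uv}c_{uv}=3k_3$. Then I would combine
\[
\beta\bigl(mn-Q+6k_3\bigr)=\sum_{uv}\beta|S_{uv}|\ \ge\ \sum_{uv}c_{uv}|S_{uv}|\ \ge\ nk_3 .
\]
Rearranging gives $\beta(Q-mn)\le(6\beta-n)k_3$, which is exactly the statement. So the key realization is to bound $c_{uv}|S_{uv}|\le\beta|S_{uv}|$ directly, using $|S_{uv}|\ge0$, rather than expanding. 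This makes the linear-term difficulty disappear, and the proof is just the three displayed steps.
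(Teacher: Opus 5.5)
Your final three-step argument is correct and is exactly the paper's proof:
\begin{itemize}
\item sum \eqref{eq:edw-local} over all triangles and regroup as $\sum_{uv}c_{uv}|S_{uv}|$;
\item bound this by $\bk(G)\sum_{uv}|S_{uv}|$ using $|S_{uv}|\ge 0$;
\item evaluate $\sum_{uv}|S_{uv}|=mn-Q+6t(G)$.
\end{itemize}
The exploratory middle part is unnecessary and can simply be dropped: this covers expanding $c_{uv}|S_{uv}|$, the weights $\bk(G)-c_{uv}$, and the appeal to Lemma~\ref{lem:degree-neighborhood}.
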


\begin{proof}
Write $\beta=\bk(G)$. 
Sum \eqref{eq:edw-local} over all triangles $T$ of
$G$:
\begin{equation}\label{eq:edw-sum1}
  n\,k_3\ \le\ \sum_{T}\sum_{e\in E(T)}|S_e| .
\end{equation}
On the right side, group the terms by the edge $e$.
An edge $e$ lies in exactly $c_e$ triangles, so it
contributes the term $|S_e|$ exactly $c_e$ times.
Hence
\begin{equation}\label{eq:edw-sum2}
  \sum_{T}\sum_{e\in E(T)}|S_e|
  =\sum_{e\in E(G)}c_e\,|S_e|
  \ \le\ \beta\sum_{e\in E(G)}|S_e| ,
\end{equation}
where the inequality uses $c_e\le\beta$ for every
edge and $|S_e|\ge0$.

It remains to compute the last sum. By
\eqref{eq:edw-Suv},
\[
  \sum_{uv\in E(G)}|S_{uv}|
  =\sum_{uv\in E(G)}
     \bigl(n-d(u)-d(v)+2c_{uv}\bigr).
\]
There are $m$ edges, so the first term gives $mn$.
Every vertex $v$ lies in exactly $d(v)$ edges, so
\[
  \sum_{uv\in E(G)}\bigl(d(u)+d(v)\bigr)
  =\sum_{v\in V(G)}d(v)^2=Q .
\]
Finally $\sum_{uv\in E(G)}c_{uv}=3k_3$, because the
left side counts the pairs $(uv,x)$ with
$x\in N(u)\cap N(v)$, and every triangle gives three
such pairs, one for each of its edges. Therefore
\begin{equation}\label{eq:edw-sum3}
  \sum_{e\in E(G)}|S_e|=mn-Q+6k_3 .
\end{equation}
Combining \eqref{eq:edw-sum1}, \eqref{eq:edw-sum2}
and \eqref{eq:edw-sum3} gives
$ n\,t(G) \le \bk(G)\,\bigl(6t(G)-Q+mn\bigr)$, as needed.
\end{proof}

\end{document}